\newtheorem{theorem}{Theorem}[section]
\newtheorem{lemma}[theorem]{Lemma}
\newtheorem{proposition}[theorem]{Proposition}
\newtheorem{corollary}[theorem]{Corollary}
\newtheorem{definition}[theorem]{Definition}
\newtheorem{remark}[theorem]{Remark}
\numberwithin{equation}{section}
\begin{document}

\title{Global regularity for the energy-critical NLS on $\mathbb{S}^3$.}

\author{Benoit Pausader}
\address{Universit\'e Paris 13, Sorbonne Paris Cit\'e, LAGA, CNRS, (UMR 7539), F-93430, Villetaneuse, France. }
\email{pausader@math.u-paris13.fr}

\author{Nikolay Tzvetkov}
\address{Universit\'e Cergy-Pontoise, UMR CNRS 8088, Cergy-Pontoise 95000, France }
\email{nikolay.tzvetkov@u-cergy.fr}

\author{Xuecheng Wang}
\address{Princeton University}
\email{xuecheng@math.princeton.edu}

\thanks{B.P. was partially supported by NSF grant DMS-1142293. N.T. is partially supported by the ERC grant Dispeq.}

\begin{abstract}
We establish global existence for the energy-critical nonlinear Schr\"odinger equation on $\mathbb{S}^3$. This follows similar lines to the work on $\mathbb{T}^3$ but requires new extinction results for linear solutions and
bounds on the interaction of a Euclidean profile and a linear wave of much higher frequency that are adapted to the new geometry.
\end{abstract}

\maketitle

\section{Introduction}

We consider the question of global well-posedness for the defocusing energy-critical nonlinear Schr\"odinger equation on $\mathbb{S}^3$, namely
\begin{equation}\label{NLS}
\left(i\partial_t-\Delta_{\mathbb{S}^3}\right)u+\vert u\vert^4u=0.
\end{equation}

\medskip

The goal of this work is to apply the method introduced by Ionescu and the first author in \cite{IoPa, IoPa2} to the energy critical NLS on the three dimensional sphere. We therefore follow the same general lines. The main novelty in this paper is the proof of the extinction lemma for the linear flow and the bound on the interaction between a high-frequency linear wave and a low frequency profile which in the case of the sphere requires new arguments related to the different geometry.

\medskip

The study of the Schr\"odinger equation on compact manifolds was initiated by Bourgain \cite{Bo2,Bo1} for torii and systematically developed by Burq-G\'erard-Tzvetkov for arbitrary compact manifolds, where the sphere appeared as a natural challenging problem, somewhat complementary to the case of the torus. More precisely, on the torus, the spectrum is badly localized, but still regular and with low multiplicity and there is a nice basis of eigenfunctions coming from the product structure; on the sphere, the spectrum is as simple as it can be, but has very high multiplicity, with eigenfunctions of different character which are in some sense as bad as can be. Informally speaking, on the sphere, the oscillations in time and in space appear as rather decoupled and have to be treated differently. We also refer to \cite{Ba,BaCaDu,BaIg,Bo2,GePi,Ha,HaPa,HeTaTz2,IoPaSt,KiViZh,TaTz,TzVi} for other works on the nonlinear Schr\"odinger equation in different geometries.

On the torus $\mathbb{T}^3$, Bourgain \cite{Bo2} proved global existence for subquintic nonlinearities. Local existence for the energy-critical problem was obtained in \cite{HeTaTz} and extended to global existence in \cite{IoPa2}.
Global existence for the defocusing problem on $\mathbb{S}^3$ was obtained for subquintic nonlinearities in \cite{BuGeTz}, local existence for the quintic problem was established in \cite{He}. In this paper, we prove global existence for the energy-critical problem, namely
\begin{theorem}\label{MainThm}
For any $u_0\in H^1(\mathbb{S}^3)$, there exists a unique global strong solution of \eqref{NLS} satisfying $u(0)=u_0$. In addition, if $u_0\in H^s$ for some $s\ge 1$, then $u\in C(\mathbb{R}:H^s)$.
\end{theorem}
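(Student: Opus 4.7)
The plan is to follow the induction-on-energy and concentration-compactness strategy developed by Ionescu and Pausader for $\mathbb{T}^3$, adapting each step to the geometry of $\mathbb{S}^3$. Define a critical energy $E_c$ as the supremum of energies below which every $H^1$ solution of \eqref{NLS} satisfies a uniform global spacetime bound; the theorem is equivalent to $E_c=\infty$. I would assume for contradiction that $E_c<\infty$, extract a minimal-energy almost-periodic blowup solution, and derive a contradiction using a profile decomposition combined with the Euclidean global regularity theorem of Colliander-Keel-Staffilani-Takaoka-Tao.

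First I would set up resolution spaces: a scattering norm $Z$ playing the role of the $L^{10}_{t,x}$ norm, together with $X^{s,b}$-type spaces capturing the dispersive properties of the linear flow on $\mathbb{S}^3$ via the bilinear estimates of Burq-G\'erard-Tzvetkov and the local theory of Herr. In these spaces I would establish (i) small-data local well-posedness and (ii) a long-time stability theorem: any near-solution whose error is small in $Z$ is shadowed by a true solution with comparable bounds. Stability is the workhorse that turns approximate superpositions of profiles into genuine solutions.

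To handle bubbles, I would construct embedded Euclidean profiles: given $\phi\in\dot H^1(\mathbb{R}^3)$ and scales $N_k\to\infty$, transplant $N_k^{1/2}\phi(N_k\exp^{-1}_p(\cdot))$ to $\mathbb{S}^3$ near a point $p$, and verify that on the natural time scale $N_k^{-2}$ the nonlinear $\mathbb{S}^3$-evolution is well approximated by the nonlinear $\mathbb{R}^3$-evolution of $\phi$; beyond that time scale one needs the \emph{extinction lemma} asserting that the linear $\mathbb{S}^3$-flow of such a high-frequency datum becomes small in $Z$. Since $\mathbb{S}^3$ admits no Poisson-summation shortcut, this extinction must be proved directly using the structure of spherical harmonics together with the fact that the $\mathbb{S}^3$ linear flow is time-periodic with explicit period. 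With these ingredients, the profile decomposition goes through: any sequence of $H^1(\mathbb{S}^3)$ data with non-vanishing $Z$-norm of linear evolution decomposes, up to an arbitrarily small $Z$-remainder, as an energy-orthogonal superposition of such Euclidean profiles. Applied to the minimizing blowup sequence and combined with the Euclidean scattering theorem and stability, only one profile can survive and one obtains a global solution, contradicting blowup. Persistence of $H^s$ for $s>1$ then follows from a standard bootstrap using the global $Z$-bound.

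The main obstacle, flagged as the principal new work, is controlling the nonlinear interaction between a concentrating high-frequency Euclidean profile and a low-frequency linear wave on the background. On $\mathbb{T}^3$ this reduces to Fourier dualization and counting lattice points in a resonance set, but on $\mathbb{S}^3$ the eigenspaces of $\Delta$ are spaces of spherical harmonics with high multiplicity and no product structure, so no direct counting is available. New multilinear estimates on the sphere---exploiting sharp spectral-cluster $L^p$ bounds and the way products of spherical harmonics decompose under Clebsch-Gordan-type rules---must be developed. Once these two geometry-specific ingredients (extinction and high-low interaction) are in place, the remainder of the Ionescu-Pausader scheme transfers essentially unchanged.
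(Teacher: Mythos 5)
Your outline reproduces the overall architecture of the proof (induction on the energy via a critical threshold, profile decomposition into Euclidean profiles transplanted by $T_N$, the CKSTT global theory for the $\mathbb{R}^3$ limit equation, a stability proposition in $U^2/V^2$-based spaces, an extinction lemma beyond the Euclidean time window, and a high-low interaction estimate), and this is indeed the scheme of the paper. But the two ingredients you yourself flag as the genuinely new, geometry-specific work are exactly the ones you do not supply, and the mechanisms you hint at are not the ones that make them work. For the extinction lemma, you propose to use ``the structure of spherical harmonics together with the fact that the $\mathbb{S}^3$ linear flow is time-periodic with explicit period.'' Periodicity is of no help here and in fact is the obstruction: $e^{itL}$ refocuses the datum at times in $2\pi\mathbb{Z}$, which is precisely why the smallness can only be asserted on a window $[TN^{-2},T^{-1}]$ inside a single period. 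The actual proof uses the explicit zonal kernel $Z_k(\theta)=k\sin(k\theta)/\sin\theta$ of $\pi_k$ twice: first to show the evolution is far from saturating Sobolev away from the concentration point, and second to bound $c_p(x)=\pi_p\varphi_N(x)$ together with its first and second discrete differences in $p$; these bounds are then fed into a Weyl-sum estimate (Dirichlet approximation of $t/\pi$, a Bourgain-type minor-arc bound, Lemma \ref{BouLem}) to control $\sum_p \eta_M(p)c_p(x)e^{itp^2}$ uniformly in $N$ on the stated window. Nothing in your sketch produces this uniform-in-$N$ smallness.

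For the high-low interaction (the analogue of \cite[Lemma 7.1]{IoPa2}), your suggestion of Clebsch--Gordan-type product rules and Sogge spectral-cluster bounds does not give the required gain: cluster $L^p$ bounds lose powers of the frequency and there is no counting argument to recover the factor $B^{-1}$. The paper's Lemma \ref{HFLFI} instead reduces, after spatial localization of the low-frequency profile to a ball of radius $N^{-1}$ and time localization to $N^{-2}$, to bounding the operator $K=P_{>BN}\int e^{-itL}We^{itL}\,dt\,P_{>BN}$, whose blocks are $N^2\hat\eta(N^{-2}(p^2-q^2))\,\pi_p\chi_N\pi_q$. Two inputs are then essential: a quantitative non-concentration bound for spectral projectors on shrinking balls, $\Vert \pi_p\chi_N\pi_q\Vert_{L^2\to L^2}\lesssim N^{-1}+\min(p,q)^{-2}$ (Lemma \ref{LocProj}, proved from the Burq--G\'erard--Tzvetkov parametrix for $\chi(\sqrt L-q)$, not from product formulas), and the integrality of the spectrum, which gives $\vert p^2-q^2\vert\gtrsim BN\vert p-q\vert$ for $p,q\ge BN$ and hence summability in Schur's test. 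Without these (or a genuine substitute), the step where approximate superpositions of a profile and the high-frequency remainder are shown to be near-solutions fails, and the induction on energy cannot be closed. A minor further discrepancy: the paper does not extract an almost-periodic minimal blowup solution; it argues directly on a minimizing sequence through the profile decomposition (Cases I--III), which is the natural formulation on a compact manifold where no scattering is available.
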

Since the Cauchy problem is ill-posed in $H^1$ for superquintic nonlinearities (see \cite{BuGeTz3}), this completes the local and global analysis of wellposedness in $H^1$. With the results in \cite{CKSTTcrit,IoPaSt}, this establishes global existence for the energy-critical problem in $\mathbb{R}^3$, $\mathbb{H}^3$ and $\mathbb{S}^3$.

For supercritical nonlinearities, classical compactness results yield global existence of weak solutions for \eqref{NLS}, see e.g. \cite{Cazenave:book}. Their uniqueness (and regularity) is, however an open problem. In particular, the results in \cite{AlCa} suggest the possibility of $H^s$ loss of regularity for weak solutions for some $1\le s\le 3/2$.

The proof of Theorem 1.1 brings together the different contributions developed in \cite{BuGeTz,BuGeTz3,BuGeTz2,BuGeTz4,He,HeTaTz,IoPa2} which address (among other things) the subcritical nonlinear Schr\"odinger equation, the analysis of products of eigenfunctions, boundedness of the first iterate in the energy-critical case, global existence for large data for the energy-critical problem and global analysis of the corresponding problem in the case of the torus $\mathbb{T}^3$.

\medskip

In the study of the nonlinear Schr\"odinger equation on a manifold, the (difficult) study of the linear flow is very important and is presumably specific to each particular setting. This is one of the major ingredients that limit the generality of the present work and we do not add in new information on that aspect (and we do rely heavily on the analysis developed in \cite{Bo1,BuGeTz3,He,HeTaTz}). A ``good'' understanding of the linear flow should automatically yield global existence for the defocusing energy-subcritical problem, and local existence and stability for the energy-critical problem.

In addressing the global existence for the energy-critical problem on the sphere, we need to revisit the main nonlinear ingredients in \cite{IoPa2} and reinterpret them. While we are not yet able to give a general result, even conditionally on a good linear theory, several aspects start to emerge for the key ingredients.

The first one concerns the application of the profile decomposition, which seems to hold in a very general context. To properly work, it requires an extinction argument which is provided here by Lemma \ref{Extinction}. Since one already has sufficiently good Strichartz estimates, one only needs an improvement on the Sobolev inequality for the linear flow. This comes from two aspects. On the one hand, by purely elliptic considerations, one can track down when the Sobolev inequality is inefficient. Very precise estimate are available to quantify this (see e.g. \cite{So1,So2}). Here, since we need to beat this inequality by a fixed but large constant, we rely on the explicit formula for the eigenprojectors, but in general, such information might follow from estimate of the Green function away from the diagonal.

Once this has been taken into account, we are left with a part of the solution that has more structure and we need to use the fact that, under the linear flow, it cannot remain concentrated for all times, which, for the moment, we can only do using some argument coming from the Euclidean Fourier transform, or from Weyl bounds, which are quite sensitive to fine properties of the spectrum. This is done here in Lemma \ref{BouLem}.

The second main ingredient is an understanding of the linearization of the equation around an arbitrary profile for certain initial data (the remainder in the profile decomposition). In general, we expect solutions to essentially follow the linear flow. In the Euclidean $\mathbb{R}^3$ case, this would follow from local smoothing estimates. In a compact manifold, this might follow for short time if one can get quantitative bounds on the concentration of eigenfunctions of $\Delta_{\mathbb{S}^3}$ to points, i.e. the absence of semi-classical measure concentrating on points (see e.g. \cite{AnMa}). Such information is provided by Lemma \ref{LocProj} which is valid for an arbitrary smooth manifold. This is then used in Lemma \ref{HFLFI} to control the first iterate of the above mentioned linearization, but this latter result uses the particular localization of the spectrum on the sphere in an essential way (see also \cite{IoPa2} for a similar arguments relying more on the ``Euclidean-like'' localization of the spectrum - in the sense that it forms a $3$-dimensional lattice).

\medskip

While the analysis in \cite{IoPa,IoPa2} can probably be combined with the new estimate on the linear flow in \cite{Bo4} to yield global existence for the defocusing energy-critical Schr\"odinger equation on $\mathbb{T}^4$, let us mention several other open problems with increasing (in our opinion) level of difficulty. 1) The analysis developed here might extend to the case of Zoll manifolds provided one obtains the appropriate bounds on eigenprojectors, possibly from arguments in the spirit of Lemma \ref{LemBuGeTz}. 2) The analysis of the same problem in the space $\mathbb{S}^2\times\mathbb{S}^1$ seems to require nontrivial adaptations from the arguments given in \cite{He,IoPa2} and here, even for small initial data. This is partly due to the failure of good $L^4$ bilinear estimates for eigenprojectors. 3) The case of $\mathbb{S}^4$ remains a challenging open problem where new ideas seem needed due to the failure of the $L^4_{x,t}$-Strichartz estimates which implies that the second iterate is unbounded, see \cite{BuGeTz}.

\medskip

Another interesting case that can be addressed with a similar analysis is the energy-critical problem in the unit ball $B(0,1)\subset\mathbb{R}^3$ with Dirichlet boundary condition and {\it radial} data\footnote{The case of arbitrary data remains an outstanding open problem, where even the linear flow is still not satisfactorily understood \cite{An,An2,BlSmSo,BlSmSo2,Pla}.}. In Section \ref{SecBall}, we shall give the main modifications required to prove

\begin{theorem}\label{BallThm}
Let $s\ge 1$. For any $u_0\in H^s\cap H^1_D(B(0,1))$ radial\footnote{Here $H^1_D$ is the completion for the $H^1$-norm of the smooth functions compactly supported in $B(0,1)$.}, there exists a unique strong solution of \eqref{NLS}, $u\in C(\mathbb{R}:H^s) $.
\end{theorem}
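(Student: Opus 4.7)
The plan is to follow the induction-on-energy strategy of the preceding sections essentially verbatim, after adapting the main analytical inputs---the linear Strichartz and bilinear eigenprojector estimates, the profile decomposition, the extinction lemma (Lemma \ref{Extinction}), and the high--low interaction bound (Lemma \ref{HFLFI})---to radial functions on $B(0,1)$ with Dirichlet boundary condition.

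The structural reason the argument transfers is a radial reduction: setting $v(r,t) = r\,u(r,t)$ converts \eqref{NLS} into the $1$D equation $i\partial_t v - \partial_r^2 v + r^{-4}|v|^4 v = 0$ on $[0,1]$ with Dirichlet boundary conditions, closely paralleling the reduction $w = \sin(\theta)\,u$ used on the radial sector of $\mathbb{S}^3$. The radial Dirichlet eigenfunctions on the ball are explicit, $e_n(r) = c_n\sin(n\pi r)/r$ with eigenvalues $(n\pi)^2$, and their simple spectrum lying on an arithmetic progression makes the linear theory and the Weyl-type bounds strictly simpler than in the sphere case. In particular, Lemma \ref{LocProj} applies after a cutoff away from $\partial B$, the extinction statement follows from a Weyl-type estimate on the integer lattice that is easier than Lemma \ref{BouLem}, and the high--low interaction argument needs only the $1$D lattice structure of the spectrum, which is present.

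Since radiality is preserved by both the linear and the nonlinear flow and no translation parameter is available, the profile decomposition simplifies considerably: profiles are either rescaled Euclidean radial profiles concentrating at $r=0$ or bounded data evolving by the linear flow, so a single core alternative needs to be analyzed in the minimal-energy step.

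The main obstacle I anticipate is reconciling the Euclidean profile framework with the Dirichlet boundary condition. A radial Euclidean profile concentrating at $r=0$ naturally lives on $\mathbb{R}^3$, and one must cut it off and plant it into $H^1_D(B(0,1))$ without spoiling either the asymptotic approximation in the profile decomposition or the nonlinear stability statement. This can be handled by the standard extension/cutoff device (in the spirit of \cite{IoPaSt}): for a profile of concentration scale $N^{-1}$, a smooth cutoff at distance $\sim N^{-1/2}$ from the origin produces an error that is small in the critical norm and whose linear evolution remains small over the relevant Strichartz time interval. Once this is in place, the inductive scheme of \cite{IoPa2} applies and yields global existence in $H^1$; propagation of higher $H^s$ regularity, $s>1$, follows from the standard persistence argument for the defocusing energy-critical equation.
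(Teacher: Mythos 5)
Your plan is workable, but it takes a genuinely different route from the paper. The paper does essentially no new analysis on the ball: after rescaling to $B(0,\pi)$ it uses the exact intertwining $(1-\Delta_{\mathbb{S}^3})f=\frac{\theta^2}{\sin^2\theta}\Delta_{\mathbb{R}^3}\bigl[\frac{\sin^2\theta}{\theta^2}f\bigr]$, which gives $e^{-it\Delta_{B^3_D}}\varphi=g\cdot e^{itL}(\varphi/g)$ with $g=\sin\theta/\theta$, an $L^2$ isometry between radial Dirichlet data on the ball and zonal-type data on $\mathbb{S}^3$. All the linear estimates of Section \ref{localwp}, Lemma \ref{Extinction} and Lemma \ref{HFLFI} then transfer verbatim; the only ball-specific input is the radial Sobolev inequality, which forces every Euclidean profile to concentrate at the origin, and the remaining nonlinear steps need only minor modifications (e.g.\ absorbing the bounded weight produced by $g$ in the nonlinearity). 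Your route --- the reduction $v=ru$ to a 1D Dirichlet problem with spectrum $(n\pi)^2$ and eigenfunctions $\sin(n\pi r)/r$ --- is the same structure seen from the other end (it is exactly the zonal reduction $\sin\theta\cdot u$ on the sphere), and the pieces you single out (extinction via Weyl sums over integers, the analogue of Lemma \ref{LocProj}, the high--low interaction bound) do indeed become explicit computations in the sine basis. What your approach buys is independence from the sphere machinery; what it costs is that you must rebuild the critical local well-posedness and stability theory (the $U^2/V^2$-based $X^1$, $Y^1$, $Z$ spaces and the trilinear estimate of Lemma \ref{NewS32}) for radial data on the ball, which you treat as a routine adaptation: converting 1D bounds on $v$ into the required $L^p_{x,t}$ bounds on $u=v/r$ involves the weight $r^{2-p}$, singular precisely at the concentration point, i.e.\ exactly the zonal eigenfunction analysis that Herr's estimates provide on the sphere and that the conjugation imports for free. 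This is real work, not bookkeeping, and it is the one place where your sketch is thinner than the argument it replaces.

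Two smaller points. The boundary issue you identify as the main obstacle is not one: the rescaled profiles $T_N\phi$ are supported in a ball of radius $\sim N^{-1/2}$ around the origin, hence far from $\partial B$ and automatically in $H^1_D$, exactly as on the sphere. Also, the profile decomposition does not collapse to ``a single core alternative'': even with all frames centered at the origin, Case III of the induction still involves arbitrarily many profiles with mutually orthogonal scales and times, so the full approximate-solution argument (Lemma \ref{AlmostSol} via Lemma \ref{HFLFI}) is still required.
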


Global existence for finite-energy solutions to the nonlinear Schr\"odinger equation on two dimensional domains was already obtained by Anton \cite{An}. We also refer to \cite{An2,BlSmSo,BlSmSo2,Pla} for other results in three dimensions and to \cite{Dod,IvPl,KiViZh,LiSmZh} for global existence and scattering results in the exterior of the unit ball.

In Section \ref{SecNot}, we review some notation and introduce our main spaces. In Section \ref{localwp}, we review the local well-posedness theory. In Section \ref{SecProf}, we present the profile decomposition on $\mathbb{S}^3$. In Section \ref{SecGWP}, we prove Theorem \ref{MainThm}. Finally in Section \ref{App}, we prove some additional results needed in the course of the proof and  give the ingredients for the proof of Theorem \ref{BallThm}.


\section{Notations and preliminaries}\label{SecNot}

In this section we summarize our notations and collect several lemmas that are used in the rest of the paper.

Given two quantities $A$ and $B$, the notation $A\lesssim B$ means that $A\le C B$, with $C$ uniform with respect to the set where $A$ and $B$ varies. We write $A\simeq B$ when $A\lesssim B\lesssim A$. If the constant $C$ involved has some explicit dependency, we emphasize it by a subscript. Thus $A\lesssim_uB$ means that $A\le C(u)B$ for some constant $C(u)$ depending on $u$.

We write $F(z)=z\vert z\vert^4$ the nonlinearity in \eqref{NLS}. For $p\in\mathbb{N}^n$ a vector, we denote by $\mathfrak{O}_{p_1,\dots ,p_n}(a_1,\dots,a_n)$ a $\vert p\vert$-linear expression which is a product of $p_1$ terms which are either equal to $a_1$ or its complex conjugate $\overline{a}_1$ and similarly for $p_j$, $a_j$, $2\le j\le n$.

\subsection{The three sphere}

We can view $\mathbb{S}^3$ as the unit sphere in the quaternion field and this endows $\mathbb{S}^3$ with a group structure with the north pole $O=(1,0,0,0)$ as the unit element. This also endows $\mathbb{S}^3\subset\mathbb{R}^4$ with the structure of a riemmannian manifold with distance $d_g$ which is also given by
\begin{equation*}
d_g(P,Q)=\angle(P,Q),
\end{equation*}
where $\angle(P,Q)$ denotes the angle between the rays starting at the origin and passing through $P$ and $Q$. For $Q\in\mathbb{S}^3$, we define $R_Q$ to be the right multiplication by $Q^{-1}$. This defines an isometry of $\mathbb{S}^3$.

We can parameterize $\mathbb{S}^3$ in exponential radial coordinates $P\mapsto (\theta,\omega)$ where $\theta=d_g(O,P)$ and $\omega\in\mathbb{S}^2$. In fact we have the global mapping\footnote{Here by $\mathbb{S}^1$ we mean $[0,2\pi]$ with the endpoints identified.}
\begin{equation*}
[0,\pi]\times[0,\pi]\times\mathbb{S}^1\ni(\theta,\psi,\varphi)\mapsto(\cos\theta,\sin\theta\cos\psi,\sin\theta\sin\psi\cos\varphi,\sin\theta\sin\psi\sin\varphi).
\end{equation*}
In these coordinates, we have that
\begin{equation}\label{Delta}
\begin{split}
\Delta_{\mathbb{S}^3}&=\frac{1}{\sin^2\theta}\frac{\partial}{\partial\theta}\sin^2\theta\frac{\partial}{\partial\theta}+\frac{1}{\sin^2\theta}\Delta_{\mathbb{S}^2}=\frac{\partial^2}{\partial\theta^2}+\frac{2\cos\theta}{\sin\theta}\frac{\partial}{\partial\theta}+\frac{1}{\sin^2\theta}\Delta_{\mathbb{S}^2}\\
&=\frac{1}{\sin^2\theta}\frac{\partial}{\partial\theta}\sin^2\theta\frac{\partial}{\partial\theta}+\frac{1}{\sin^2\theta\sin\psi}\frac{\partial}{\partial\psi}\sin\psi\frac{\partial}{\partial\psi}+\frac{1}{\sin^2\theta\sin^2\psi}\frac{\partial^2}{\partial\varphi^2}.
\end{split}
\end{equation}
In these coordinates, we also have the explicit formula for the Haar measure
\begin{equation*}
d\nu_g=(\sin\theta)^2\sin\psi d\theta d\psi d\varphi.
\end{equation*}

\subsection{Spherical harmonics}
We will consider the operator $L=-\Delta_{\mathbb{S}^3}+1$. For $k\in\mathbb{N}^\ast$, we define $\mathcal{E}_k$ to be the space of $k-1$-th spherical harmonics. We have an $L^2$-orthonormal decomposition
\begin{equation*}
L^2(\mathbb{S}^3)=\bigoplus_{k\in\mathbb{N}^\ast}\mathcal{E}_k
\end{equation*}
and $\pi_k$ defined above is the orthogonal projection on $\mathcal{E}_k$. These satisfy that for any $\varphi\in\mathcal{E}_k$, $L\varphi=k^2\varphi$. We recall the following bounds from Sogge \cite{So3}
\begin{equation}\label{SogBd}
\Vert \pi_qf\Vert_{L^p(\mathbb{S}^3)}\lesssim q^{1-3/p}\Vert f\Vert_{L^2(\mathbb{S}^3)},\,\, 4\le p\le \infty.
\end{equation}

We then define projectors on $I\subset\mathbb{R}$ by
\begin{equation}\label{DefPN}
P_I=\sum_{k\in I}\pi_k,\quad P_{\le N}=\sum_{k\in\mathbb{N}}\eta(\frac{k}{N})\pi_k,\quad P_N=P_{\le N}-P_{\le N/2}=\sum_{k\in\mathbb{N}}\eta_N(k)\pi_k.
\end{equation}
for $\eta\in C^\infty_c(\mathbb{R})$ such that $\eta(x)=1$ when $\vert x\vert\le 1$ and $\eta(x)=0$ when $\vert x\vert\ge 2$ and where $\eta_N(x)=\eta(x/N)-\eta(2x/N)$. In particular all the sums over $N$ below are implicitly taken to be over all dyadic integers, $N=2^k$ for some $k\in\mathbb{N}$.

In fact, we can be more precise about the spectral projectors. We define the Zonal function of order $k$, ${\bf Z}_k$ as
\begin{equation}\label{Zonal}
Z_k(\theta)=k\frac{\sin(k\theta)}{\sin\theta},\quad{\bf Z}_k( P)=Z_k(\angle(P,O)),
\end{equation}
where $O$ denotes the north pole. One may directly check that these are eigenfunctions of the Laplace-Beltrami operator on $\mathbb{S}^3$ defined in \eqref{Delta}. These allow to get the following classical result:
\begin{lemma}
The spectral projection on the $k-1$-th eigenspace can be written as
\begin{equation}\label{DefSpecProj}
\left[\pi_kf\right](P )=\frac{1}{2\pi^2}\int_{\mathbb{S}^3}{\bf Z}_k(R_PQ)f(Q)d\nu_g(Q).
\end{equation}
\end{lemma}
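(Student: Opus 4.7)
Plan: Let $T_k$ denote the operator on the right-hand side of \eqref{DefSpecProj}. I would prove $T_k=\pi_k$ by showing (i) $T_k$ maps into $\mathcal{E}_k$ and (ii) $T_k$ restricts to the identity on $\mathcal{E}_k$.

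For (i), a direct computation from \eqref{Delta} applied to a radial function gives $\Delta_{\mathbb{S}^3} Z_k = (1-k^2) Z_k$, so $\mathbf{Z}_k\in\mathcal{E}_k$. By right-invariance of the metric (since $R_Q$ is an isometry), $\mathbf{Z}_k(R_PQ)=Z_k(d_g(P,Q))=\mathbf{Z}_k(R_QP)$, so for each fixed $Q$ the function $P\mapsto\mathbf{Z}_k(R_PQ)$ coincides with $\mathbf{Z}_k\circ R_Q$ and hence lies in $\mathcal{E}_k$. Differentiating under the integral sign yields $T_kf\in\mathcal{E}_k$.

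For (ii), I would identify $T_k$ with the reproducing-kernel operator of $\mathcal{E}_k$. Let $K_k(P,Q)=\sum_j e_j(P)\overline{e_j(Q)}$ for any $L^2$-orthonormal basis $\{e_j\}$ of $\mathcal{E}_k$, so that $\pi_kf(P)=\int K_k(P,Q)f(Q)d\nu_g(Q)$. Because $\pi_k$ is intrinsically defined and $\Delta_{\mathbb{S}^3}$ commutes with isometries, one has $K_k(gP,gQ)=K_k(P,Q)$ for every isometry $g$; taking $g=R_P$ gives $K_k(P,Q)=K_k(O,R_PQ)$. The function $X\mapsto K_k(O,X)$ lies in $\mathcal{E}_k$ and is invariant under the stabilizer of $O$ in $\mathrm{Isom}(\mathbb{S}^3)$, i.e., it is a zonal function of the geodesic distance to $O$. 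Since the zonal subspace of $\mathcal{E}_k$ is one-dimensional and spanned by $\mathbf{Z}_k$ (by the irreducibility of $\mathcal{E}_k$ as an $SO(4)$-module), $K_k(O,X)=c_k\mathbf{Z}_k(X)$. Applying the reproducing identity to $\mathbf{Z}_k$ itself at $P=O$,
\begin{equation*}
k^2=\mathbf{Z}_k(O)=\int_{\mathbb{S}^3}K_k(O,X)\mathbf{Z}_k(X)d\nu_g(X)=c_k\Vert\mathbf{Z}_k\Vert_{L^2}^2,
\end{equation*}
and computing in the coordinates of Subsection~2.1,
\begin{equation*}
\Vert\mathbf{Z}_k\Vert_{L^2}^2=4\pi k^2\int_0^\pi\sin^2(k\theta)d\theta=2\pi^2k^2,
\end{equation*}
so $c_k=(2\pi^2)^{-1}$, completing the proof.

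The main conceptual step is the one-dimensionality of the zonal subspace of $\mathcal{E}_k$, a classical fact from representation theory. One could circumvent this input by observing that $T_k$ is self-adjoint with a real symmetric kernel (since $\mathbf{Z}_k(R_PQ)$ depends only on $d_g(P,Q)$), invoking (i) to conclude $T_k$ annihilates $\mathcal{E}_k^\perp$, and then writing $T_k|_{\mathcal{E}_k}=\alpha_k\mathrm{Id}$ via Schur's lemma for the $\mathrm{Isom}(\mathbb{S}^3)$-action on $\mathcal{E}_k$; the value $\alpha_k=1$ then follows from the same computation of $T_k\mathbf{Z}_k(O)$.
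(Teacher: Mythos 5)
Your proof is correct, but it follows a genuinely different route from the paper. You take the spectral projector $\pi_k$ as given, write its reproducing kernel $K_k$, use commutation with isometries to reduce to the zonal function $K_k(O,\cdot)\in\mathcal{E}_k$, invoke the one-dimensionality of the zonal subspace (equivalently, irreducibility of $\mathcal{E}_k$ under the isometry group) to get $K_k(O,\cdot)=c_k\mathbf{Z}_k$, and then fix $c_k=(2\pi^2)^{-1}$ by the evaluation $\mathbf{Z}_k(O)=k^2$ together with $\Vert\mathbf{Z}_k\Vert_{L^2}^2=2\pi^2k^2$; all of these computations check out (including $L\mathbf{Z}_k=k^2\mathbf{Z}_k$ and the fact that the eigenvalues $k^2$ are simple across degrees, so $\mathbf{Z}_k\in\mathcal{E}_k$). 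The paper argues in the opposite direction and avoids any representation theory: it defines $\Pi_k$ by the right-hand side of \eqref{DefSpecProj}, checks from the symmetry of the kernel and the isometry invariance that $\Pi_k$ is self-adjoint and satisfies $L\Pi_k=k^2\Pi_k$, and then reduces the identification $\Pi_k=\pi_k$ to the completeness statement $g=\sum_k\Pi_k g$, which it proves at the north pole by expanding $\sin\theta\cdot g(\theta,\omega)$ in a Fourier sine series on $[0,\pi]$ and letting $\theta\to0$. Your approach is shorter and more conceptual but rests on the classical fact about zonal harmonics (or Schur's lemma, in your alternative), which you correctly flag as the essential external input; the paper's argument is entirely self-contained, trading that input for an elementary one-dimensional Fourier computation that simultaneously establishes the normalization and the completeness.
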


\begin{proof}
Denote, for this proof only $\Pi_k$ as the operator defined by the right-hand side of \eqref{DefSpecProj}. Using the symmetry
\begin{equation*}
{\bf Z}_k(R_PQ)=Z_k(\angle (P,Q))={\bf Z}_k(R_QP)=\overline{{\bf Z}_k(R_QP)},
\end{equation*}
and the fact that since $R_Q$ is an isometry, $\Pi_k$ commutes with $\Delta_{\mathbb{S}^3}$ and
we see that $L\Pi_kf=k^2\Pi_kf$. This also shows that $\Pi_k$ is self-adjoint. Therefore it is sufficient to prove that for any $g\in C^{10}(\mathbb{S}^3)$, there holds that
\begin{equation}\label{CompletenessOfPik}
g=\sum_{k\ge 1}\Pi_kg.
\end{equation}
Since $\Pi_k$ commutes with rotations, it suffices to prove that this equality holds at the north pole $O$. We switch to exponential coordinates. Using Fourier analysis on $[0,\pi]$, we see that
\begin{equation*}
\sin\theta\cdot g(\theta,\omega)=\sum_{k\ge 1}c_k(\omega)\sin(k\theta),\quad c_k(\omega)=\frac{2}{\pi}\int_0^\pi g(\theta,\omega)\sin(\theta)\sin(k\theta)d\theta.
\end{equation*}
In other words,
\begin{equation*}
g(\theta,\omega)=\sum_{k\ge 1}c_k(\omega)\frac{\sin(k\theta)}{\sin\theta}.
\end{equation*}
Integrating this over $\omega\in\mathbb{S}^2$ and letting $\theta\to 0$, we find (since $c_k(\omega)\in l^1_k(k^2)$ uniformly\footnote{Here we denote $l^1_k(k^2)$ the set of sequence which are summable in $k$ for the measure $k^2dk$. We also denote $d\omega$ the Haar measure on $\mathbb{S}^2$.} in $\omega$) that
\begin{equation*}
\begin{split}
g(O)&=\lim_{\theta\to0}\frac{1}{4\pi}\int_{\mathbb{S}^2}g(\theta,\omega)d\omega=\lim_{\theta\to0}\sum_{k\ge 1}\frac{1}{4\pi}\int_{\mathbb{S}^2}c_k(\omega)\frac{\sin(k\theta)}{\sin\theta}d\omega\\
&=\sum_{k\ge 1}\frac{1}{4\pi}\int_{\mathbb{S}^2}kc_k(\omega) d\omega=\sum_{k\ge 1}\frac{1}{2\pi^2}\int_0^\pi\int_{\mathbb{S}^2}g(\theta,\omega)k\frac{\sin (k\theta)}{\sin\theta} \sin^2\theta d\theta d\omega\\
&=\sum_{k\ge 1}\Pi_kg(O).
\end{split}
\end{equation*}
This shows \eqref{CompletenessOfPik} and finishes the proof.
\end{proof}

The spectral projectors $\pi_q$ satisfy a convenient reproducing formula highlighted in \cite{BuGeTz}: for $\chi\in\mathcal{S}(\mathbb{R})$ such that $\chi(0)=1$ and $\hat{\chi}$ supported on $[\varepsilon,2\varepsilon]$,
\begin{equation}\label{DefChi}
\chi_q\pi_q=\pi_q\chi_q=\pi_q,\quad\chi_q=\chi(\sqrt{L}-q).
\end{equation}
The interest of this comes from the following description of $\chi_q$:
\begin{lemma}[\cite{BuGeTz3}, Lemma 2.3.]\label{LemBuGeTz}

There exists $\varepsilon_0>0$ such that for every $\varepsilon\in (0,\varepsilon_0)$, we can decompose
\begin{equation}\label{DecChi}
\chi_q=qT_q+R_q,\quad\Vert R_q\Vert_{L^2\to H^{10}}\lesssim q^{-10}
\end{equation}
and there exists $\delta>0$ such that for any $x_0\in \mathbb{S}^3$, there exists a system of coordinates centered at $x_0$ such that for any $\vert x\vert\le\delta$,
\begin{equation*}
T_qf(x)=\int_{\mathbb{R}^3}e^{-iqd_g(x,y)}a(x,y,q)f(y)dy,
\end{equation*}
where $a(x,y,q)$ is a polynomial in $1/q$ with smooth coefficients supported on the set
\begin{equation*}
\{(x,y)\in V\times V:\vert x\vert\le\delta\ll\varepsilon/C\le\vert y\vert\le C\varepsilon\}.
\end{equation*}
\end{lemma}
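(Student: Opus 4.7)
The plan is to compute the kernel of $\chi_q$ directly from the explicit spectral formula on $\mathbb{S}^3$ and then apply Poisson summation in the spectral variable $k$. Since $\chi_q$ acts as multiplication by $\chi(k-q)$ on $\mathcal{E}_k$, formulas \eqref{DefSpecProj}--\eqref{Zonal} give the kernel
\begin{equation*}
\chi_q(x,y)=\frac{1}{2\pi^2\sin\alpha}\sum_{k\ge 1}k\,\chi(k-q)\,\sin(k\alpha),\qquad \alpha=d_g(x,y),
\end{equation*}
so the large parameter $q$ appearing as a coefficient will produce both the factor $q$ and the phase $e^{-iqd_g}$ advertised in the lemma, while Schwartz decay of $\chi$ away from $0$ will give the smoothing remainder.

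The main step is Poisson summation. I would shift $k=q+j$ and extend the resulting sum over $j\ge 1-q$ to $j\in\mathbb{Z}$; since $\chi$ is Schwartz, the discarded tail contributes $O_N(q^{-N})$ in every $C^N$-norm and is placed in $R_q$. Writing $\sin((q+j)\alpha)=(e^{i(q+j)\alpha}-e^{-i(q+j)\alpha})/(2i)$, Poisson summation yields
\begin{equation*}
\sum_{j\in\mathbb{Z}}\chi(j)e^{-ij\alpha}=\sum_{n\in\mathbb{Z}}\hat\chi(\alpha+2\pi n),\qquad \sum_{j\in\mathbb{Z}}\chi(j)e^{ij\alpha}=\sum_{n\in\mathbb{Z}}\hat\chi(2\pi n-\alpha).
\end{equation*}
Because $\hat\chi$ is supported in $[\varepsilon,2\varepsilon]$ with $\varepsilon<\varepsilon_0$ chosen much smaller than $\pi$, and $\alpha$ stays in a bounded subset of $(0,\pi)$, the right-hand sides collapse to $\hat\chi(\alpha)$ and to $0$ respectively: the one-sided positive support of $\hat\chi$ eliminates all $n\ne 0$ terms and kills the $e^{ij\alpha}$ branch entirely. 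This is precisely where a single outgoing phase $e^{-iq\alpha}$ is selected, rather than a standing sine combination. Splitting $(q+j)\chi(j)=q\chi(j)+j\chi(j)$ and differentiating the first Poisson identity in $\alpha$ to get $\sum_{j}j\chi(j)e^{-ij\alpha}=i\hat\chi'(\alpha)$, one arrives at
\begin{equation*}
\chi_q(x,y)=q\,e^{-iq\alpha(x,y)}a(x,y,q)+R_q(x,y),\quad a(x,y,q)=\frac{i\hat\chi(\alpha)}{4\pi^2\sin\alpha}-\frac{\hat\chi'(\alpha)}{4\pi^2 q\sin\alpha},
\end{equation*}
which is of the announced form with $a$ a polynomial of degree $1$ in $1/q$ with smooth coefficients since $\sin\alpha$ stays bounded away from $0$ on the support of $\hat\chi(\alpha)$.

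The support claim is then immediate: $\hat\chi(\alpha)\ne 0$ forces $\alpha(x,y)\in[\varepsilon,2\varepsilon]$, and in geodesic normal coordinates centered at $x_0$ the triangle inequality gives $\alpha(x,y)=|y|+O(|x|)$, so $|x|\le\delta\ll\varepsilon$ forces $|y|\in[\varepsilon/C,C\varepsilon]$ for a suitable $C$. The bound $\Vert R_q\Vert_{L^2\to H^{10}}\lesssim q^{-10}$ follows from $|\chi(j)|\lesssim_N(1+|j|)^{-N}$ on the discarded tail $j<1-q$: even after differentiating the zonal kernel up to order $10$ in $x$ (which produces at most factors of $k^{10}\le(q+|j|)^{10}$), the resulting kernel is of size $O_N(q^{-N+11})$, good enough for any fixed $N$. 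The main obstacle is verifying that the Poisson summation is arithmetically clean---that no $n\ne 0$ term survives---and this uses crucially both that the spectrum of $\sqrt{L}$ on $\mathbb{S}^3$ is the arithmetic progression $\{k\}_{k\ge 1}$ with gap $1$ and that $\varepsilon_0$ is chosen small compared to the period $2\pi$. On a general manifold this explicit step would be replaced by a Hadamard parametrix for $e^{it\sqrt{L}}$ on the short interval $t\in[\varepsilon,2\varepsilon]$, to which one applies stationary phase in $(t,|\xi|)$ to recover the same conclusion.
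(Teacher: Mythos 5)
Your argument is correct, but it is worth noting that the paper does not prove this statement at all: it is quoted verbatim from \cite{BuGeTz3} (Lemma 2.3 there), whose proof is the general-manifold one, namely writing $\chi(\sqrt{L}-q)=\frac{1}{2\pi}\int_{\mathbb{R}}\hat\chi(t)e^{-iqt}e^{it\sqrt{L}}\,dt$, inserting the Hadamard/Lax parametrix for the half-wave group on the small time interval $[\varepsilon,2\varepsilon]$ (below the injectivity radius), and performing stationary phase in the frequency variable to produce the phase $e^{-iqd_g(x,y)}$, an amplitude with an expansion in $1/q$, and a smoothing remainder. You instead exploit the sphere: the exact zonal kernel $\frac{1}{2\pi^2}k\frac{\sin(k\alpha)}{\sin\alpha}$ from \eqref{DefSpecProj}--\eqref{Zonal} plus Poisson summation in $k$, with the one-sided support of $\hat\chi$ in $[\varepsilon,2\varepsilon]$ (together with $2\varepsilon<\pi$, which is where $\varepsilon_0$ enters, since $\alpha=d_g(x,y)$ ranges over $[0,\pi]$) killing all $n\neq 0$ terms and the incoming branch $e^{ij\alpha}$; your computation of the amplitude $\frac{i\hat\chi(\alpha)}{4\pi^2\sin\alpha}-\frac{\hat\chi'(\alpha)}{4\pi^2 q\sin\alpha}$ checks out, and the discarded $k\le 0$ terms indeed give an $O_N(q^{-N})$ kernel in $C^{10}_x$, hence the $L^2\to H^{10}$ bound, once $N$ is taken large enough. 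What your route buys is an elementary, exact identity: the amplitude is literally a degree-one polynomial in $1/q$ and $R_q$ is in fact $O(q^{-\infty})$-smoothing; what it loses is generality, since it uses that $\mathrm{spec}\,\sqrt{L}=\{k\}_{k\ge1}$ is an exact arithmetic progression and the explicit zonal kernel, so it would not by itself cover the Zoll-manifold extension the introduction alludes to (your closing remark about replacing this step by a Hadamard parametrix is precisely the \cite{BuGeTz3} proof). Two small pieces of housekeeping you should add: the kernel representation you derive is against the measure $d\nu_g$, so in the chart the Jacobian of $\exp_{x_0}$ must be absorbed into $a(x,y,q)$ to match the statement with $dy$; and the stated support property of $a$ holds after restricting to (or cutting off on) $\vert x\vert\le\delta$, which is all the lemma claims.
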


In the study of the linearization of \eqref{NLSBis} at a profile, we will need the following quantitative version of the fact that quantum measures do not concentrate on points.

\begin{lemma}\label{LocProj}
Let $N\ge 1$ be a dyadic number and fix $P\in\mathbb{S}^3$, then there holds that
\begin{equation}\label{CrucialGain1}
\Vert \mathfrak{1}_{B(P,N^{-1})}\pi_q\Vert_{L^2\to L^2}=\Vert \pi_q\left[\mathfrak{1}_{B(P,N^{-1})}\cdot\right]\Vert_{L^2\to L^2}\lesssim N^{-1/2}+q^{-2}.
\end{equation}
\end{lemma}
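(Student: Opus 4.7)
The first step is to note that the two operator norms in \eqref{CrucialGain1} coincide because $\mathfrak{1}_{B(P,N^{-1})}$ and $\pi_q$ are self-adjoint. When $N$ is bounded by an absolute constant (in particular, when $N^{-1}$ exceeds the constant $\delta$ of Lemma \ref{LemBuGeTz}), the trivial bound $\Vert\mathfrak{1}_B\pi_q\Vert_{L^2\to L^2}\le 1\lesssim N^{-1/2}$ already gives the conclusion, so one may assume $N$ is large enough that $B(P,N^{-1})$ lies in the coordinate patch of Lemma \ref{LemBuGeTz} centered at $x_0=P$. Using $\chi_q\pi_q=\pi_q$, write $\mathfrak{1}_B\pi_q=\mathfrak{1}_B\chi_q\pi_q$ and apply the decomposition $\chi_q=qT_q+R_q$: the contribution of $R_q$ is $\Vert\mathfrak{1}_B R_q\Vert_{L^2\to L^2}\le\Vert R_q\Vert_{L^2\to H^{10}}\lesssim q^{-10}$, which is swallowed by the $q^{-2}$ error term.

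The heart of the proof is to show $\Vert\mathfrak{1}_B T_q\Vert_{L^2\to L^2}\lesssim q^{-1}N^{-1/2}$. The plan is to use the $TT^*$ identity $\Vert\mathfrak{1}_B T_q\Vert^2=\Vert T_q^*\mathfrak{1}_B T_q\Vert_{L^2\to L^2}$; the latter is an integral operator in $(y,y')$ with kernel
\begin{equation*}
K(y,y')=\int\mathfrak{1}_B(x)\,e^{iq[d_g(x,y)-d_g(x,y')]}\,\overline{a(x,y,q)}\,a(x,y',q)\,dx.
\end{equation*}
A Taylor expansion in $x$ around the origin shows that $\nabla_x[d_g(x,y)-d_g(x,y')]$ equals, to leading order, the difference of unit vectors $\hat{y'}-\hat{y}$, whose magnitude is of order $|z_\perp|/\varepsilon$ where $z=y'-y$ and $z_\perp$ is the component of $z$ orthogonal to $y$. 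Integration by parts $M$ times in $x$ over the ball of diameter $N^{-1}$ then gives $|K(y,y')|\lesssim N^{-3}(1+qN^{-1}|z_\perp|/\varepsilon)^{-M}$. Integrating in cylindrical coordinates about $y$ (writing $y'=y+z_\parallel\hat{y}+z_\perp$ with $|z_\parallel|\lesssim\varepsilon$, $z_\perp\perp y$) yields
\begin{equation*}
\int|K(y,y')|\,dy'\lesssim N^{-3}\cdot\varepsilon\cdot(\varepsilon N/q)^2\lesssim (Nq^2)^{-1},
\end{equation*}
with the symmetric estimate identical. Schur's test then gives $\Vert T_q^*\mathfrak{1}_B T_q\Vert_{L^2\to L^2}\lesssim 1/(Nq^2)$, hence $\Vert q\mathfrak{1}_B T_q\Vert_{L^2\to L^2}\lesssim N^{-1/2}$, as required.

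The hard part is geometric: the phase $d_g(x,y)-d_g(x,y')$ is stationary in $x$ whenever $y$ and $y'$ lie on the same ray from $x$, so $\nabla_x\phi$ only oscillates in the two directions transverse to $y$, not the radial one. Integration by parts therefore provides decay over a $2$-dimensional region rather than a $3$-dimensional one, producing the factor $(\varepsilon N/q)^2$ in place of $(\varepsilon N/q)^3$, which is precisely the source of the exponent $N^{-1/2}$ after the $TT^*$ square root (a fully 3-dimensional decay would yield the stronger $N^{-1}$). A secondary technical point is that the expansion $\nabla_x\phi\approx\hat{y'}-\hat{y}$ has an $O(|x||z|/\varepsilon^2)$ correction coming from the curvature of $d_g$, which must be verified to be negligible in the integration-by-parts regime $|z_\perp|\gtrsim\varepsilon N/q$; taking $M$ sufficiently large and handling the near-collinear region by the trivial bound $|K|\lesssim N^{-3}$ suffices to absorb it.
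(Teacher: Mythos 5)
Your reduction is sound and coincides with the paper's first step: the two norms agree by self-adjointness, the case of bounded $N$ is trivial, and writing $\mathfrak{1}_B\pi_q=\mathfrak{1}_B\chi_q\pi_q$ with $\chi_q=qT_q+R_q$ correctly isolates a remainder of size $q^{-10}$. The genuine gap is in the kernel estimate for $T_q^\ast\mathfrak{1}_BT_q$ in the regime $q\gg N^2$, which the lemma must cover (in Lemma \ref{HFLFI} it is invoked for all $q\gtrsim BN$ with no upper bound, and the zonal example shows no decay in $q$ is available as a substitute for $N^{-1/2}$). Your nonstationary-phase bound $|K(y,y')|\lesssim N^{-3}\bigl(1+qN^{-1}|z_\perp|/\varepsilon\bigr)^{-M}$ requires the correction $O(|x||z|/\varepsilon^2)=O(N^{-1}|z|/\varepsilon^2)$ to be dominated by the leading term $|z_\perp|/\varepsilon$, i.e. $|z_\perp|\gtrsim |z|/(N\varepsilon)$; since $|z|$ can be comparable to $\varepsilon$, this is only guaranteed for $|z_\perp|\gtrsim N^{-1}$, whereas your integration-by-parts regime begins at $|z_\perp|\sim\varepsilon N/q$, which lies far below $N^{-1}$ as soon as $q\gg \varepsilon N^2$. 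Taking $M$ large does not help, because the problem is the lower bound on the phase gradient, not the number of integrations by parts. And if, as you propose, the whole region $|z_\perp|\lesssim N^{-1}$ is handled by the trivial bound $|K|\lesssim N^{-3}$, its contribution to the Schur integral is about $N^{-3}\cdot\varepsilon\cdot N^{-2}=\varepsilon N^{-5}$, so the argument only yields
\begin{equation*}
q\Vert\mathfrak{1}_BT_q\Vert_{L^2\to L^2}\lesssim \varepsilon^{1/2}\,q\,N^{-5/2}+\varepsilon^{3/2}N^{-1/2},
\end{equation*}
which exceeds $N^{-1/2}+q^{-2}$ (and is unbounded in $q$) once $q\gg N^2$. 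Thus, as written, your proof establishes the lemma only for $q\lesssim N^2$; to go further you would have to exploit the radial oscillation as well (in the near-collinear region the phase is stationary at an interior point of the ball, and the kernel in fact decays in $z_\parallel$ at scale $\varepsilon^2N^2/q$), and it is not clear that absolute-value kernel bounds plus Schur's test suffice even then. A secondary point: one cannot integrate by parts against the sharp cutoff $\mathfrak{1}_{B(P,N^{-1})}$; it must first be dominated by a smooth bump with $|\nabla^j\phi_N|\lesssim N^j$, which is standard but is precisely where the factor $N$ per derivative in your decay factor comes from.

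By contrast, the paper's proof is uniform in $q$ and avoids pointwise kernel bounds altogether: it places the ball on the input side of $T_q$ (so it sits in the region $d_g(x_k,\cdot)\in[\delta_1,\delta_2]$ of a chart centered at some $x_k$), slices $T_q=\int_{\delta_1}^{\delta_2}T^r_q\,dr$ in geodesic polar coordinates, gains the factor $N^{-1/2}$ by Cauchy--Schwarz in $r$ because the support of $\mathfrak{1}_B$ confines $r$ to an interval of length $2N^{-1}$, and concludes with the uniform bound $q\Vert T^r_q\Vert_{L^2(\mathbb{S}^2)\to L^2}\lesssim 1$ from \cite[Lemma 2.14]{BuGeTz3}. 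That slicing exploits $L^2$ orthogonality in the radial variable, which is exactly what your Schur-test argument lacks in the large-$q$ regime; if you want a self-contained oscillatory-integral proof, you should either incorporate that mechanism or carry out a genuine stationary-phase analysis in the near-collinear region rather than the trivial bound.
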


\begin{remark}
Note that this estimate is sharp when testing against zonal harmonics of degree $p\ge N$. In addition, the proof holds on any compact smooth riemmannian manifold.
\end{remark}

\begin{proof}

This claim essentially follows from \cite{BuGeTz3}. We give here the modification necessary to obtain it. It suffices to prove the second bound as the first follows by duality. Also, we may assume that $N\gg1$.

Using \eqref{DefChi} and \eqref{DecChi}, remarking that
\begin{equation*}
\pi_q\mathfrak{1}_{B(P,N^{-1})}=q\pi_qT_q\mathfrak{1}_{B(P,N^{-1})}+\pi_qR_q\mathfrak{1}_{B(P,N^{-1})},\quad\Vert\pi_q\Vert_{L^2\to L^2}\le 1, 
\end{equation*}
we see that it suffices to show that
\begin{equation*}
\Vert T_q\mathfrak{1}_{B(P,N^{-1})}\Vert_{L^2\to L^2}\lesssim (q^2N)^{-1/2}.
\end{equation*}
Now, using the notation of \cite[page 12]{BuGeTz3}, we can decompose
\begin{equation*}
T_q=\int_{r=\delta_1}^{\delta_2} T^r_q dr
\end{equation*}
where for a finite number of charts covering $\mathbb{S}^3$ and centered at points $x_k$, there holds that
\begin{equation*}
\begin{split}
&\left[\mathfrak{1}_{B(x_k,\delta)}T_q^rf\right](Q)=\int_{\mathbb{S}^2}e^{-iqd_g(Q,\exp_{x_k}(r\omega))}a(Q,\exp_{x_k}(r\omega),q)\kappa(r,\omega)f_r(\omega)d\omega,\\
&f_r(\omega)=f(\exp_{x_k}(r\omega)),
\end{split}
\end{equation*}
where $\kappa$ is a new smooth function. Applying H\"older's inequality in $r$, we obtain for any $Q\in B(x_k,\delta)$
\begin{equation*}
\vert T_q(\mathfrak{1}_{B(P,N^{-1})}f)(Q)\vert^2\lesssim N^{-1}\int_{r=\delta_1}^{\delta_2} \vert T^r_qf(Q)\vert^2 dr
\end{equation*}
since by the triangle inequality, for any $x_k$, we have that
\begin{equation*}
Q\in B(P,N^{-1}),\, d_g(x_k,Q)=r\quad \Rightarrow\quad d_g(x_k,P)-N^{-1}\le r\le d_g(x_k,P)+N^{-1}.
\end{equation*}
The result then follows from \cite[Lemma 2.14]{BuGeTz3} which implies that
\begin{equation*}
q\Vert T^r_qf_r\Vert_{L^2}\lesssim \Vert f_r\Vert_{L^2}.
\end{equation*}
\end{proof}

\subsection{Linear analysis}

In fact, for simplicity of notations, we will replace equation \eqref{NLS} by
\begin{equation}\label{NLSBis}
\left(i\partial_t+L\right)u+\vert u\vert^4u=0.
\end{equation}
This is completely equivalent since a solution $u(x,t)$ solves \eqref{NLSBis} if and only if $v(x,t)=e^{-it}u(x,t)$ solves \eqref{NLS}.

\medskip

For solutions of \eqref{NLSBis}, we recall the conservation laws
\begin{equation}\label{conserve}
E(u)=\frac{1}{2}\int_{\mathbb{S}^3}\left[\vert\nabla u(x)\vert^2+\frac{1}{3}\vert u(x)\vert^6\right]dx,\quad M(u)=\int_{\mathbb{S}^3}\vert u(x)\vert^2dx.
\end{equation}
Here and below $dx$ refers to the Haar measure on $\mathbb{S}^3$.
These conserved quantities provide a uniform in time control on the $H^1$ norm and motivate our choice of function spaces.

\medskip

{\bf{Function spaces.}} The strong spaces are similar to the one used by Herr \cite{He}, adapting previous ideas from Herr-Tataru-Tzvetkov \cite{HeTaTz,HeTaTz2}. Namely
\begin{equation}\label{KoTa}
\begin{split}
\Vert u\Vert_{\widetilde{X}^s(\mathbb{R})}&:=\left(\sum_{k\in\mathbb{N}^\ast} k^{2s}\Vert e^{itk^2}\pi_ku(t)\Vert_{U^2_t(L^2)}^2\right)^\frac{1}{2},\\
\Vert u\Vert_{\widetilde{Y}^s(\mathbb{R})}&:=\left(\sum_{k\in\mathbb{N}^\ast} k^{2s}\Vert e^{itk^2}\pi_ku(t)\Vert_{V^2_t(L^2)}^2\right)^\frac{1}{2},
\end{split}
\end{equation}
where we refer to \cite{HaHeKo,He,HeTaTz,HeTaTz2,KoTa} for a description of the spaces $U^p(L^2),V^p(L^2)$ and of their properties. Note in particular that
\begin{equation*}
\widetilde{X}^1(\mathbb{R})\hookrightarrow \widetilde{Y}^1(\mathbb{R})\hookrightarrow L^\infty(\mathbb{R},H^1).
\end{equation*}
We denote by $U^p_L(L^2)$ the space $e^{itL}U^p(L^2)$.

For intervals $I\subset\mathbb{R}$, we define $X^s(I)$, $s\in\mathbb{R}$, in the usual way as restriction norms, thus
\begin{equation*}
X^1(I):=\{u\in C(I:H^1):\Vert u\Vert_{X^s(I)}:=\sup_{J\subseteq I,\,\vert J\vert\leq 1}[\inf_{v\cdot\mathbf{1}_J(t)=u\cdot\mathbf{1}_J(t)}\Vert v\Vert_{\widetilde{X}^s}]<\infty\}.
\end{equation*}
The spaces $Y^s(I)$ are defined in a similar way. The norm controlling the inhomogeneous term on an interval $I=(a,b)$ is then defined as
\begin{equation}\label{NNorm}
\Vert h\Vert_{N(I)}:=\Big \Vert\int_a^te^{i(t-s)L}h(s)ds\Big\Vert_{X^1(I)}.
\end{equation}

We also need a weaker critical norm
\begin{equation}\label{NewS319}
\begin{split}
&\Vert u\Vert_{Z(I)}:=\sum_{p\in\{p_0,p_1\}}\sup_{J\subseteq I,\vert J\vert\le 1}(\sum_{N=2^k,k\in\mathbb{N}}N^{5-p/2}\Vert P_Nu(t)\Vert_{L^{p}_{x,t}(\mathbb{S}^3\times J)}^{p})^{1/p},\\
&p_0=4+1/10,\qquad p_1=100.
\end{split}
\end{equation}
This definition, in particular the choice of the exponents $p_0,p_1$, is motivated by the Strichartz estimates from Theorem \ref{Stric} below.
This norm is divisible and, thanks to sufficiently strong multilinear Strichartz estimates, still controls the global evolution, as will be manifest from the local theory in Section \ref{localwp}. 
Moreover, as a consequence of Corollary \ref{NewS31} below,
\begin{equation*}
\Vert u\Vert_{Z(I)}\lesssim \Vert u\Vert_{X^1(I)},
\end{equation*}
thus $Z$ is indeed a weaker norm. 

{\bf Definition of solutions}.
Given an interval $I\subseteq\mathbb{R}$, we call $u\in C(I:H^1(\mathbb{S}^3))$ a strong solution of \eqref{NLSBis} if $u\in X^1(I)$ and $u$ satisfies that for all $t,s\in I$,
\begin{equation*}
u(t)=e^{i(t-s)L}u(s)+i\int_s^te^{i(t-t^\prime)L}\left(u(t^\prime)\vert u(t^\prime)\vert^4\right)dt^\prime.
\end{equation*}

{\bf Dispersive estimates}. We recall the following result from \cite[Lemma 3.5.]{He}.

\begin{theorem}\label{Stric}
If $p>4$ then
\begin{equation*}
\Vert P_Ne^{itL}f\Vert_{L^p_{x,t}(\mathbb{S}^3\times [-1,1])}\lesssim_p N^{\frac{3}{2}-\frac{5}{p}}\Vert P_Nf\Vert_{L^2(\mathbb{S}^3)}.
\end{equation*}
\end{theorem}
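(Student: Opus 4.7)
\medskip

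\noindent\textbf{Proof plan for Theorem~\ref{Stric}.}

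The plan is to interpolate between two endpoint bounds, exploiting two special features of $\mathbb{S}^3$: the spectrum of $L$ equals $\{k^2:k\in\mathbb{N}^\ast\}\subset\mathbb{Z}$, so $e^{itL}$ is $2\pi$-periodic in $t$ and may be analyzed on the time circle $[0,2\pi]\supset[-1,1]$, and the linear Sogge bound \eqref{SogBd} provides a sharp control of each $\pi_k$ in $L^p_x$.

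The $L^\infty_{x,t}$-endpoint is immediate. Applying \eqref{SogBd} at $p=\infty$ gives $\|\pi_k f\|_{L^\infty}\lesssim k\|\pi_k f\|_{L^2}$, and summing in $k\sim N$ by Cauchy--Schwarz yields
\[\|P_Ne^{itL}f\|_{L^\infty_{x,t}}\le\sum_{k\sim N}\|\pi_kf\|_{L^\infty}\lesssim N\cdot\Bigl(\sum_{k\sim N}1\Bigr)^{1/2}\|P_Nf\|_{L^2}\lesssim N^{3/2}\|P_Nf\|_{L^2}.\]

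For the $L^4_{x,t}$-endpoint, set $u(t,x)=P_Ne^{itL}f(x)=\sum_{k\sim N}e^{itk^2}\pi_kf(x)$, and expand
\[|u(t,x)|^2=\sum_{m\in\mathbb{Z}}e^{imt}\,c_m(x),\qquad c_m(x)=\sum_{\substack{k,k'\sim N\\ k^2-k'^2=m}}\pi_kf(x)\,\overline{\pi_{k'}f(x)}.\]
Parseval on the time circle $[0,2\pi]$ gives $\|u\|_{L^4_{x,t}}^4=2\pi\sum_m\|c_m\|_{L^2_x}^2$. Two inputs then combine to control the right-hand side: the divisor-type count $\#\{(k,k')\sim N:k^2-k'^2=m\}\lesssim_\varepsilon N^\varepsilon$ (from the factorization $(k-k')(k+k')=m$ and the classical divisor bound), and the bilinear eigenfunction estimate from Burq--G\'erard--Tzvetkov,
\[\|\pi_kf\cdot\pi_{k'}g\|_{L^2_x}\lesssim\min(k,k')^{1/2}\,\|\pi_kf\|_{L^2}\,\|\pi_{k'}g\|_{L^2},\]
whose proof exploits the reproducing formula of Lemma~\ref{LemBuGeTz} via stationary phase. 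Together these give $\|u\|_{L^4_{x,t}}\lesssim_\varepsilon N^{1/4+\varepsilon}\|P_Nf\|_{L^2}$.

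For $p>4$, interpolate through H\"older, $\|u\|_{L^p_{x,t}}^p\le\|u\|_{L^\infty_{x,t}}^{p-4}\|u\|_{L^4_{x,t}}^4$, obtaining the bound $N^{3/2-5/p+4\varepsilon/p}\|P_Nf\|_{L^2}$. To eliminate the residual $\varepsilon$-loss in the exponent and obtain exactly $N^{3/2-5/p}$, one re-runs the Parseval computation directly at the target exponent $p$: the trigonometric polynomial $|u|^2$ has time-frequencies in $[-N^2,N^2]$, so a Bernstein inequality in $t$ on the circle can be combined with the bilinear Sogge bound in $x$, and the strict inequality $p>4$ provides precisely the margin needed to absorb the arithmetic factors cleanly. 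The main obstacle is the $L^4$-endpoint, and specifically the bilinear Sogge estimate: this is where the geodesic structure of $\mathbb{S}^3$ enters in an essential way, as it does not follow formally from the linear bound \eqref{SogBd}.
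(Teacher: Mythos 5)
Your two endpoints are essentially fine: the $L^\infty_{x,t}$ bound follows from \eqref{SogBd} as you say, and the Parseval-in-time computation for $L^4_{x,t}$ is correct apart from the slip that the divisor bound fails at $m=0$ (the diagonal has $\sim N$ pairs; that term is harmless by Cauchy--Schwarz). You also do not actually need the bilinear eigenfunction estimate there: since both frequencies are $\sim N$, H\"older plus Sogge's bound \eqref{SogBd} at $p=4$ already gives $\Vert \pi_k f\,\pi_{k'}f\Vert_{L^2}\lesssim N^{1/2}\Vert\pi_kf\Vert_{L^2}\Vert\pi_{k'}f\Vert_{L^2}$. The genuine gap is your last step. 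What you have rigorously proved is $\Vert P_Ne^{itL}f\Vert_{L^p_{x,t}}\lesssim_{p,\varepsilon}N^{3/2-5/p+\varepsilon}\Vert P_Nf\Vert_{L^2}$, and the proposed repair (re-running Parseval at exponent $p$, Bernstein in $t$, ``the margin $p>4$ absorbs the arithmetic factors'') does not remove the $N^{\varepsilon}$: the $\varepsilon$ is produced by the divisor bound itself, so any argument routed through Parseval in $t$ together with a pointwise count of representations $k^2-k'^2=m$ retains it, and Bernstein on the time circle converts $L^4_t$ into $L^p_t$ at exactly the cost $N^{2(1/4-1/p)}$, which combined with the lossy $L^4$ bound merely reproduces the lossy $L^p$ bound (the same happens if you use Hausdorff--Young with exponent $(p/2)'$ and interpolate $\ell^1$ and $\ell^2$ bounds on the coefficients $c_m$). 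Removing the loss for $p>4$ genuinely requires the level-set/major--minor arc structure of the quadratic Weyl sums $\sum_{k\sim N}e^{itk^2}$ -- large values occur only near rationals with small denominators and on sets of small measure -- which is exactly the kind of information recorded in Lemma \ref{BouLem} and is not a consequence of the divisor bound.

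For comparison: the paper does not reprove this statement; it is quoted from Herr \cite{He} (Lemma 3.5 there). That proof factorizes differently from yours: for fixed $x$ one applies the one-dimensional lossless estimate $\Vert\sum_{k\sim N}a_ke^{itk^2}\Vert_{L^p_t(\mathbb{T})}\lesssim_p N^{1/2-2/p}\Vert a\Vert_{\ell^2}$, valid for $p>4$ (essentially from \cite{Bo2}, proved by the Weyl-sum analysis just mentioned), with $a_k=\pi_kf(x)$, and then takes $L^p_x$ norms, using Minkowski's inequality and \eqref{SogBd} to bound $\Vert(\sum_{k\sim N}\vert\pi_kf\vert^2)^{1/2}\Vert_{L^p_x}\lesssim N^{1-3/p}\Vert P_Nf\Vert_{L^2}$; the two factors multiply to exactly $N^{3/2-5/p}$. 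So to complete your argument you should either import that one-dimensional estimate, or replace your divisor-counting step by a genuine Weyl-sum/level-set argument; as written, the clean exponent $N^{3/2-5/p}$ is not justified, only the version with an $N^\varepsilon$ loss.
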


As a consequence of the properties of the $U^p_L$ spaces, we have:

\begin{corollary}\label{NewS31}
If $p>4$ then for any dyadic integer $N$ and any time interval $I$, $\vert I\vert\leq 1$,
\begin{equation}\label{UpEst}
\Vert P_Nu\Vert_{L^p_{x,t}(\mathbb{S}^3\times I)}\lesssim N^{\frac{3}{2}-\frac{5}{p}}\Vert u\Vert_{U^p_L(I,L^2)}.
\end{equation}
\end{corollary}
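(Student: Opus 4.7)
The plan is to use the atomic structure of the $U^p_L$ spaces to reduce the estimate to the fixed‐frequency Strichartz bound in Theorem \ref{Stric}. This is the standard ``$U^p$-transference'' principle and I expect the argument to be quite short.

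First I would recall that every $u\in U^p_L(I,L^2)$ admits, for any $\varepsilon>0$, an atomic decomposition $u=\sum_{k}c_k a_k$ with $\sum_k|c_k|\le (1+\varepsilon)\Vert u\Vert_{U^p_L}$, where each $a_k$ is a $U^p_L$-atom, i.e.\ a finite step function of the form
\begin{equation*}
a_k(t)=\sum_{j=1}^{J_k}\mathbf{1}_{[t_{j-1}^k,t_j^k)}(t)\,e^{itL}\phi_j^k,\qquad \sum_{j=1}^{J_k}\Vert\phi_j^k\Vert_{L^2}^p\le 1,
\end{equation*}
with $t_0^k<\cdots<t_{J_k}^k$ a finite partition of $I$. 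By the triangle inequality in $L^p_{x,t}$,
\begin{equation*}
\Vert P_N u\Vert_{L^p_{x,t}(\mathbb{S}^3\times I)}\le \sum_k |c_k|\,\Vert P_N a_k\Vert_{L^p_{x,t}(\mathbb{S}^3\times I)},
\end{equation*}
so it suffices to prove $\Vert P_N a\Vert_{L^p_{x,t}(\mathbb{S}^3\times I)}\lesssim N^{3/2-5/p}$ uniformly over atoms $a$ and over intervals $I$ with $|I|\le 1$.

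Next, for an atom $a$ supported on the disjoint subintervals $[t_{j-1},t_j)\subset I$, the slabs $\mathbb{S}^3\times[t_{j-1},t_j)$ are pairwise disjoint, so the $L^p_{x,t}$-norm splits into an $\ell^p_j$-sum:
\begin{equation*}
\Vert P_N a\Vert_{L^p_{x,t}(\mathbb{S}^3\times I)}^p=\sum_{j=1}^{J}\Vert P_N e^{itL}\phi_j\Vert_{L^p_{x,t}(\mathbb{S}^3\times[t_{j-1},t_j))}^p.
\end{equation*}
Since $[t_{j-1},t_j)\subset I$ has length at most $1$, by time translation invariance of $e^{itL}$ we may reduce each summand to an integration over a subinterval of $[-1,1]$ and invoke Theorem \ref{Stric} to bound
\begin{equation*}
\Vert P_N e^{itL}\phi_j\Vert_{L^p_{x,t}(\mathbb{S}^3\times[t_{j-1},t_j))}\lesssim N^{\frac{3}{2}-\frac{5}{p}}\Vert\phi_j\Vert_{L^2}.
\end{equation*}
Inserting this into the $\ell^p_j$-sum and using the atom normalization $\sum_j\Vert\phi_j\Vert_{L^2}^p\le 1$ yields $\Vert P_N a\Vert_{L^p_{x,t}(\mathbb{S}^3\times I)}\lesssim N^{3/2-5/p}$.

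Combining the per-atom bound with the atomic decomposition and letting $\varepsilon\to 0$ gives the claimed estimate. The only technical point is the book-keeping about restricting atoms to the interval $I$ and applying Theorem \ref{Stric}, which works thanks to $|I|\le 1$ and the translation invariance of the linear propagator; there is no genuine obstacle here beyond carefully matching constants.
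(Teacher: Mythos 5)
Your argument is correct and is exactly the standard $U^p$-transference principle that the paper invokes implicitly when it states the corollary ``as a consequence of the properties of the $U^p_L$ spaces'' (with the atomic-decomposition proof appearing in the cited references, e.g.\ Hadac--Herr--Koch and Herr--Tataru--Tzvetkov). The reduction to atoms, the disjoint-slab splitting of the $L^p_{x,t}$-norm, the use of Theorem \ref{Stric} together with time-translation invariance of $e^{itL}$, and the atom normalization $\sum_j\Vert\phi_j\Vert_{L^2}^p\le 1$ are precisely the intended steps, so there is nothing to add.
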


We will also use the following results from Herr \cite{He}.

\begin{proposition}[\cite{He}, Lemma  2.5]\label{Alex3}
If $f\in L^1_t(I,H^1(\mathbb{S}^3))$ then
\begin{equation}\label{EstimNnorm}
\Vert f\Vert_{N(I)}\lesssim \sup_{\{\Vert v\Vert_{Y^{-1}(I)}\le1\}}\int_{\mathbb{S}^3\times I}f(x,t)\overline{v(x,t)}dxdt.
\end{equation}
In particular, there holds for any smooth function $g$ that
\begin{equation}\label{EstimX1Norm}
\Vert g\Vert_{X^1([0,1])}
\lesssim \Vert g(0)\Vert_{H^1}+(\sum_N \Vert P_N\left(i\partial_t+L\right)g\Vert_{L^1_t([0,1], H^1)}^2)^\frac{1}{2}.
\end{equation}
\end{proposition}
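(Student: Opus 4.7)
The plan is to reduce both statements to the atomic/variation duality for the spaces $U^2_L$ and $V^2_L$ together with Littlewood--Paley summation. Throughout I denote $F(t)=\int_a^t e^{i(t-s)L}f(s)\,ds$, so that $\|f\|_{N(I)}=\|F\|_{X^1(I)}$ on the interval $I=(a,b)$.

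For \eqref{EstimNnorm}, I would start from the Littlewood--Paley identity
\begin{equation*}
\|F\|_{\widetilde X^1}^2=\sum_{k\ge 1}k^2\|\pi_k F\|_{U^2_L(L^2)}^2,\qquad e^{-itL}\pi_k F(t)=\int_a^t e^{-isL}\pi_k f(s)\,ds.
\end{equation*}
By the standard $U^2/V^2$ duality (e.g.\ \cite{HaHeKo,HeTaTz,KoTa}), for each $k$ there exists $v_k$ with $\|v_k\|_{V^2_L(L^2)}\le 1$ and $\pi_k v_k=v_k$ such that $\|\pi_k F\|_{U^2_L}=\int_I \langle \pi_k F(t),v_k(t)\rangle_{L^2}\,dt$ up to a factor. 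An integration by parts in the $s$-variable (which is the content of the $V^2$-duality and uses that $V^2_L\hookrightarrow L^\infty_t L^2_x$) turns this pairing into $\int_I\langle f(s),v_k(s)\rangle\,ds$, with controlled boundary contributions that vanish on taking suprema over restrictions. Introducing the dual weight $k^2=k\cdot k$ and applying Cauchy--Schwarz in $k$ to the family $(v_k)$ yields a single test function $v=\sum_k v_k$ with $\|v\|_{\widetilde Y^{-1}}\le 1$ and
\begin{equation*}
\|F\|_{\widetilde X^1}\lesssim\sup_{\|v\|_{\widetilde Y^{-1}}\le 1}\int_{\mathbb{S}^3\times I}f(x,t)\overline{v(x,t)}\,dx\,dt,
\end{equation*}
which, after passing to the restriction norms on intervals of length at most one, is exactly \eqref{EstimNnorm}.

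For \eqref{EstimX1Norm}, set $h=(i\partial_t+L)g$ and write Duhamel's formula
\begin{equation*}
g(t)=e^{itL}g(0)-i\int_0^t e^{i(t-s)L}h(s)\,ds.
\end{equation*}
The linear part satisfies $\|e^{itL}g(0)\|_{X^1([0,1])}\lesssim\|g(0)\|_{H^1}$ directly from the definition of $\widetilde X^1$ (each $e^{itL}\pi_k g(0)$ is a $U^2_L$-atom). For the inhomogeneous part I apply \eqref{EstimNnorm} and bound the resulting duality pairing by Littlewood--Paley decomposition in both arguments, H\"older in time, and Cauchy--Schwarz in $N$:
\begin{equation*}
\Bigl|\int_{\mathbb S^3\times[0,1]}h\,\overline v\,dx\,dt\Bigr|
\le \sum_N\|P_N h\|_{L^1_t H^1}\|P_N v\|_{L^\infty_t H^{-1}}
\le \Bigl(\sum_N\|P_Nh\|_{L^1_t H^1}^2\Bigr)^{1/2}\Bigl(\sum_N\|P_Nv\|_{L^\infty_t H^{-1}}^2\Bigr)^{1/2}.
\end{equation*}
The last factor is $\lesssim \|v\|_{Y^{-1}([0,1])}\le 1$ because of the embedding $V^2_L(L^2)\hookrightarrow L^\infty_t L^2_x$ applied frequency by frequency. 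This yields \eqref{EstimX1Norm}.

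The main obstacle, and the place where I would be most careful, is the first step: the $U^2_L/V^2_L$ duality statement used above is slightly delicate (boundary values of $V^2$-functions, density of $U^2$-atoms, and summability of the dual test functions $v_k$ into a genuine element $v\in\widetilde Y^{-1}$). Everything else is essentially Littlewood--Paley plus H\"older, and there are no genuinely new ingredients beyond what is already developed in \cite{HaHeKo,He,HeTaTz,KoTa}.
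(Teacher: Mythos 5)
Your argument is correct and follows essentially the same route as the source the paper cites for this statement ([He, Lemma 2.5], which rests on the $U^2$--$V^2$ duality of Hadac--Herr--Koch exactly as you use it): the paper itself offers no independent proof. The only cosmetic imprecision is that the single test function should be a weighted combination $v=\sum_k \lambda_k v_k$ with $\lambda_k\simeq k^2\Vert\pi_k F\Vert_{U^2_L}/\Vert F\Vert_{\widetilde X^1}$ (so that $\Vert v\Vert_{\widetilde Y^{-1}}\le 1$ by the choice of weights rather than by Cauchy--Schwarz alone), but this is exactly the standard bookkeeping and does not affect the proof.
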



\section{Local well-posedness and stability theory}\label{localwp}

In this section we present large-data local well-posedness and stability results that allow us to connect nearby intervals of nonlinear evolution. This is essentially a modification of the results in \cite{He}.
We need the following notation
\begin{equation}\label{Zprime}
\Vert u\Vert_{Z^\prime(I)}=\Vert u\Vert_{Z(I)}^\frac{1}{2}\Vert u\Vert_{X^1(I)}^\frac{1}{2}.
\end{equation}

We start with the following nonlinear estimate:
\begin{lemma}\label{NewS32}
There exists $\delta>0$ such that if $u_1,u_2,u_3$ satisfy $P_{N_i}u_i=u_i$ with $N_1\ge N_2\ge N_3\ge 1$ and $\vert I\vert\le 1$, then
\begin{equation}\label{NLEstTor}
\Vert u_1u_2u_3\Vert_{L^2_{x,t}(\mathbb{S}^3\times I)}\lesssim \left(\frac{N_3}{N_1}+\frac{1}{N_2}\right)^\delta \Vert u_1\Vert_{Y^0(I)}\Vert u_2\Vert_{Z^\prime(I)}\Vert u_3\Vert_{Z^\prime(I)}
\end{equation}
and, with $p_0=4+1/10$ as in \eqref{NewS319},  
\begin{equation}\label{Al1}
\Vert u_1u_2u_3\Vert_{L^2_{x,t}(\mathbb{S}^3\times I)}\lesssim N_1^{1/2-5/p_0}N_2^{1/2-5/p_0}N_3^{10/p_0-2}\Vert u_1\Vert_{Z(I)}\Vert u_2\Vert_{Z(I)}\Vert u_3\Vert_{Z(I)}.
\end{equation}
\end{lemma}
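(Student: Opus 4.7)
The second estimate \eqref{Al1} is the simpler one and should be treated first. The plan is to apply Hölder's inequality
\[
\Vert u_1u_2u_3\Vert_{L^2_{x,t}}\le \Vert u_1\Vert_{L^{p_0}_{x,t}}\Vert u_2\Vert_{L^{p_0}_{x,t}}\Vert u_3\Vert_{L^{q}_{x,t}},\qquad \tfrac{2}{p_0}+\tfrac{1}{q}=\tfrac12,
\]
so that $\frac{1}{q}=\frac12-\frac{2}{p_0}$, hence $\frac12-\frac{5}{q}=\frac{10}{p_0}-2$. The $L^{p_0}$ factors are controlled directly by $\Vert P_{N_i}u_i\Vert_{L^{p_0}_{x,t}}\lesssim N_i^{1/2-5/p_0}\Vert u_i\Vert_{Z(I)}$, which is immediate from the definition of $Z$ in \eqref{NewS319}. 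The $L^q$ factor for $u_3$ is obtained by interpolating the two components of the $Z$-norm corresponding to $p=p_0$ and $p=p_1$: log-convexity yields $\Vert P_{N_3}u_3\Vert_{L^q_{x,t}}\lesssim N_3^{1/2-5/q}\Vert u_3\Vert_{Z(I)}=N_3^{10/p_0-2}\Vert u_3\Vert_{Z(I)}$. Multiplying the three bounds gives \eqref{Al1}; the exponents are designed to match precisely.

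The first estimate \eqref{NLEstTor} is the main point and requires a bilinear refinement of Strichartz on $\mathbb{S}^3$. The plan is to first upgrade the estimate to the following bilinear ingredient for linear solutions: for $f,g\in L^2(\mathbb{S}^3)$ and dyadic $M\ge m\ge 1$,
\[
\Vert P_M e^{itL}f\cdot P_m e^{itL}g\Vert_{L^2_{x,t}(\mathbb{S}^3\times J)}\lesssim \Bigl(\tfrac{m}{M}+\tfrac{1}{m}\Bigr)^{\delta}\,m^{a}\,\Vert f\Vert_{L^2}\Vert g\Vert_{L^2},\qquad |J|\le 1,
\]
for some power $a$ compatible with Corollary~\ref{NewS31}. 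Such an estimate is proved by decomposing $P_M f=\sum_{k\sim M}\pi_k f$, $P_m g=\sum_{\ell\sim m}\pi_\ell g$, writing the product $e^{itL}\pi_k f\cdot e^{itL}\pi_\ell g$ as $e^{it(k^2+\ell^2)}(\pi_k f)(\pi_\ell g)$, expanding the $L^2_t$ norm by Plancherel in time, and using the separation $|k^2+\ell^2-k'^2-\ell'^2|$ to localize the sum; the gain $(m/M)^\delta$ comes from the relative sparsity of coincidences of eigenvalue sums when $m\ll M$, while $m^{-\delta}$ is obtained by pigeon-holing the diagonal contribution. This is the sphere analogue of the lattice counting used on $\mathbb{T}^3$ in \cite{IoPa2,HeTaTz}, and it rests on the fact that the eigenvalues of $L$ on $\mathbb{S}^3$ are exactly $\{k^2:k\ge 1\}$, so the relevant counting is one-dimensional.

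With this bilinear building block in hand, \eqref{NLEstTor} follows by a by-now-standard procedure. First, by the $U^p/V^2$ atomic decomposition (and the embedding $V^2\hookrightarrow U^p$ for $p>2$), it is enough to prove \eqref{NLEstTor} when $u_1=e^{itL}f_1$ is a linear solution; the $Y^0$-norm on $u_1$ is precisely what lets us pay the $U^p$ price when passing from $V^2$ atoms to the bilinear estimate above. Then apply Hölder in the form
\[
\Vert u_1u_2u_3\Vert_{L^2_{x,t}}\le \Vert u_1u_j\Vert_{L^2_{x,t}}\,\Vert u_k\Vert_{L^\infty_t L^\infty_x\ \text{or}\ L^r_{x,t}},
\]
with the pair $(j,k)\in\{(3,2),(2,3)\}$ chosen so that the resulting gain $(N_j/N_1)^\delta$ or $(1/N_j)^\delta$ matches $(N_3/N_1+1/N_2)^\delta$; the remaining single factor is controlled by interpolating $Z'=Z^{1/2}(X^1)^{1/2}$ against a Strichartz exponent close to $L^\infty_{x,t}$ (using $p_1=100$), paying only a small power of $N_k$ that can be absorbed into the $\delta$-gain. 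Summation in the eigenspace indices is handled by the Cauchy-Schwarz summation intrinsic to the definitions of $\widetilde Y^0$ and $\widetilde X^1$.

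The main obstacle is the bilinear estimate on $\mathbb{S}^3$ with the explicit gain $(m/M+1/m)^\delta$. Unlike the torus case, one cannot appeal to lattice-point counts in $\mathbb{Z}^3$; one must instead exploit the one-dimensional arithmetic of $\{k^2\}$ together with the Sogge projector bound \eqref{SogBd} and Weyl-type sums to control the number of near-resonant quadruples $(k,\ell,k',\ell')$. The rest is bookkeeping: matching the $Y^0,Z',Z$ norms to the output of the bilinear/Strichartz ingredients, and checking that the loss in $N_2,N_3$ from the Hölder step is absorbed by the $\delta$-gain factor.
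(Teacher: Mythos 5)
Your treatment of \eqref{Al1} is correct and is essentially the paper's: H\"older with exponents $(p_0,p_0,q)$, $1/q=1/2-2/p_0$ (so $q=20p_0$), the two $L^{p_0}$ factors estimated directly by the $p_0$-component of the $Z$-norm and the $L^q$ factor by interpolating the $p_0$ and $p_1$ components.

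For \eqref{NLEstTor}, however, your route diverges from the paper and has genuine gaps. The paper does not prove any new bilinear refinement: it quotes Herr's trilinear estimate \cite[Corollary 3.7]{He}, which already contains the gain $(N_3/N_1+1/N_2)^\delta$ with $V^2_L$ norms, and interpolates it against a second trilinear estimate (assembled from the proof of \cite[Proposition 3.6]{He}, Strichartz, and \cite[Lemma 2.4]{He}) that carries the $Z$-norms on $u_2,u_3$; the interpolation is what produces exactly $Y^0\times Z'\times Z'$ with a (smaller) $\delta$. Your plan instead rests on a bilinear estimate $\Vert P_Me^{itL}f\,P_me^{itL}g\Vert_{L^2_{x,t}}\lesssim (m/M+1/m)^\delta m^a\Vert f\Vert_{L^2}\Vert g\Vert_{L^2}$ with $a$ unspecified, and the claim that the gain follows from sparsity of coincidences of the eigenvalue sums $k^2+\ell^2$. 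At the natural power $a=1/2$ this estimate is false: take $f,g$ to be $L^2$-normalized highest-weight harmonics $(x_1+ix_2)^k$, $(x_1+ix_2)^\ell$ with $k\sim M$, $\ell\sim m$; the left-hand side equals $\Vert \pi_kf\,\pi_\ell g\Vert_{L^2_x}\sim m^{1/2}$ with no dependence on $M$ (a single time frequency, so no orthogonality in time is available), which beats $(m/M+1/m)^\delta m^{1/2}$. A version with $a$ close to $1$ is plausible and can indeed be obtained from time-orthogonality plus counting, but only in combination with the \emph{bilinear} spectral cluster bounds of \cite{BuGeTz3} ($\Vert\pi_kf\,\pi_\ell g\Vert_{L^2}\lesssim \min(k,\ell)^{1/2}\Vert f\Vert_{L^2}\Vert g\Vert_{L^2}$, up to losses); the linear Sogge bound \eqref{SogBd} alone only yields $(Mm)^{1/4}$ per pair, which changes the exponent bookkeeping. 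In other words, your ``bilinear ingredient'' is essentially a re-derivation of Herr's input, and its statement must be pinned down before anything downstream can be checked.

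The second gap is the assembly step, which is not mere bookkeeping. To capture the $N_3/N_1$ part of the gain you must pair $u_1$ with $u_3$ in the bilinear estimate, and then the remaining factor $u_2$ is placed in an $L^\infty$-type (or very high $L^r$) norm. Frequency-localized at $N_2$, such a norm costs a factor of order $N_2^{1/2}$ relative to $Z$ or $X^1$ (e.g. $\Vert P_{N_2}u_2\Vert_{L^\infty_x}\lesssim N_2^{1/2}\Vert P_{N_2}u_2\Vert_{H^1}$, and the $p_1$-component of $Z$ fares no better); this is not ``a small power that can be absorbed into the $\delta$-gain'', since the target \eqref{NLEstTor} carries no positive power of $N_2$ and $\delta$ is small. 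Moreover, a single H\"older pairing yields a $Z$-dependence on only one of $u_2,u_3$, while \eqref{NLEstTor} requires a positive power of $Z$ on \emph{both} (this is what makes the norm divisible in the application). The only way to repair this within your scheme is to prove two different trilinear bounds — one with the gain and no $Z$, one with $Z$ on both low-frequency factors and no gain — and interpolate, which is precisely the paper's argument; as written, your H\"older step does not produce \eqref{NLEstTor}.
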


\begin{proof}[Proof of Lemma \ref{NewS32}]
Inequality \eqref{NLEstTor} follows from interpolation between the two estimates
\begin{equation*}
\begin{split}
\Vert u_1u_2u_3\Vert_{L^2_{x,t}(\mathbb{S}^3\times I)}&\lesssim \left(\frac{N_3}{N_1}+\frac{1}{N_2}\right)^\delta N_2N_3\Vert u_1\Vert_{V^2_L(I)}\Vert u_2\Vert_{V^2_L(I)}\Vert u_3\Vert_{V^2_L(I)},\\
\Vert u_1u_2u_3\Vert_{L^2_{x,t}(\mathbb{S}^3\times I)}&\lesssim \Vert u_1\Vert_{V^2_L(I)}\left(\Vert u_2\Vert_{Z(I)}\Vert u_3\Vert_{Z(I)}\right)^\frac{3}{5}\left(\Vert u_2\Vert_{X^1(I)}\Vert u_3\Vert_{X^1(I)}\right)^\frac{2}{5}.
\end{split}
\end{equation*}
The first is taken directly from \cite[Corollary 3.7]{He}, while the second follows from the following modifications of its proof.
We start with the estimate
\begin{equation}\label{VarHe1}
\begin{split}
\Vert u_1u_2u_3\Vert_{L^2_{x,t}}&\lesssim \left[\max(N_2^2/N_1,1)\right]^{1/2-2/p_1}N_2^{1+\varepsilon-2/p_2}N_3^{\frac{3}{2}-\varepsilon-\frac{2}{p_3}}\Vert u_1\Vert_{U^2_L}\Vert u_2\Vert_{U^2_L}\Vert u_3\Vert_{U^2_L}\\
\end{split}
\end{equation}
valid for $\varepsilon>0$ and $4<p_1,p_2,p_3<+\infty$ satisfying $1/p_1+1/p_2+1/p_3=1/2$ which we borrow from the proof of
\cite[Proposition 3.6]{He}. Independently, using Theorem \ref{Stric} and H\"older's inequality, we obtain
\begin{equation}\label{VarHe2}
\begin{split}
\Vert u_1u_2u_3\Vert_{L^2_{x,t}}&\lesssim \Vert u_1\Vert_{L^{q_1}_{x,t}}\Vert u_2\Vert_{L^{q_2}_{x,t}}\Vert u_3\Vert_{L^{q_3}_{x,t}}\\
&\lesssim N_1^{\frac{3}{2}-\frac{5}{q_1}}N_2^{\frac{1}{2}-\frac{5}{q_2}}N_3^{\frac{1}{2}-\frac{5}{q_3}}\Vert u_1\Vert_{U^{q_1}_L}\left(N_2^{\frac{5}{q_2}-\frac{1}{2}}\Vert u_2\Vert_{L^{q_2}_{x,t}}\right)\left(N_3^{\frac{5}{q_3}-\frac{1}{2}}\Vert u_3\Vert_{L^{q_3}_{x,t}}\right)
\end{split}
\end{equation}
where $4<q_1,q_2,q_3<+\infty$ satisfy $1/q_1+1/q_2+1/q_3=1/2$.

\medskip

In the case $N_1\le N_2^2$, we may choose
\begin{equation*}
\begin{split}
p_1=40,\quad q_1=q_2=p_2=25/6,\quad p_3=200/47,\quad q_3=50,\quad\varepsilon=1/100
\end{split}
\end{equation*} 
and apply \cite[Lemma 2.4]{He}.

\medskip

In the case $N_2^2\le N_1$, we use \eqref{VarHe1} with the same exponents, while \eqref{VarHe2} is replaced by
\begin{equation*}
\begin{split}
\Vert u_1u_2u_3\Vert_{L^2_{x,t}}&\lesssim \Vert u_1\Vert_{L^\infty_tL^2_x}\Vert u_2\Vert_{L^{p_2}_tL^\infty_x}\Vert u_3\Vert_{L^{p_2}_tL^\infty_x}\\
&\lesssim (N_2N_3)^{\frac{3}{p_2}}\Vert u_1\Vert_{L^\infty_tL^2_x}\Vert u_2\Vert_{L^{p_2}_{x,t}}\Vert u_3\Vert_{L^{p_2}_{x,t}}\\
&\lesssim (N_2N_3)^{\frac{1}{2}-\frac{2}{p_2}}\Vert u_1\Vert_{U^4_L}\Vert u_2\Vert_{Z}\Vert u_3\Vert_{Z}\\
\end{split}
\end{equation*}
and we apply again \cite[Lemma 2.4]{He}.

Finally, \eqref{Al1} follows from \eqref{VarHe2} with $q_1=q_2=p_0$ and $q_3=20p_0$. 
\end{proof}

From here on, we have an estimate formally identical to the nonlinear estimate in \cite[Lemma 3.1.]{IoPa2} and the following lemma and propositions are proved using straightforward adaptation from \cite[Section 3]{IoPa2} (see also \cite{IoPa}).

\begin{lemma}\label{NLEst2}
For $u_k\in X^1(I)$, $k=1\dots 5$, $\vert I\vert\leq 1$, the estimate
\begin{equation*}
\Vert \Pi_{i=1}^5\tilde{u}_k\Vert_{N(I)}\lesssim \sum_{\sigma\in\mathfrak{S}_5}\Vert u_{\sigma(1)}\Vert_{X^1(I)}\Pi_{j\ge 2}\Vert u_{\sigma(j)}\Vert_{Z^\prime(I)}
\end{equation*}
holds true, where $\tilde{u}_k\in\{u_k,\overline{u_k}\}$. In fact, we have that
\begin{equation}\label{NLEst3}
\Vert \sum_{B\ge 1}P_{B}\tilde{u}_1\Pi_{j=2}^5P_{\le DB}\tilde{u}_j\Vert_{N(I)}\lesssim_D \Vert u_1\Vert_{X^1(I)}\Pi_{j=2}^5\Vert u_j\Vert_{Z^\prime(I)},
\end{equation}
\end{lemma}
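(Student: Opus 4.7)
The plan is to follow the duality-plus-bilinear-estimate scheme of \cite[Section 3]{IoPa2}, with Lemma \ref{NewS32} playing the role of the key trilinear building block. By the duality estimate \eqref{EstimNnorm}, it suffices to bound the six-linear form
\begin{equation*}
\left|\int_{\mathbb{S}^3\times I}\overline{v}\prod_{k=1}^5\tilde u_k\,dx\,dt\right|
\end{equation*}
uniformly for $\Vert v\Vert_{Y^{-1}(I)}\le 1$. I would Littlewood-Paley decompose $u_k=\sum_{N_k}P_{N_k}u_k$ and $v=\sum_M P_M v$. Since on $\mathbb{S}^3$ the product of spherical harmonics of degrees $d_1,\ldots,d_5$ has spectral support in degrees at most $d_1+\cdots+d_5$, the contribution vanishes unless $M\lesssim\max_k N_k$; by symmetrization over permutations, it suffices to work in the ordered regime $N_1\ge N_2\ge\cdots\ge N_5$, for which $M\lesssim N_1$.

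Next, I would split the form by Cauchy-Schwarz in $L^2_{x,t}$ into the two triples $(P_{N_1}u_1,P_{N_2}u_2,P_{N_3}u_3)$ and $(P_M v,P_{N_4}u_4,P_{N_5}u_5)$ and apply \eqref{NLEstTor} to each $L^2$ norm, placing the highest-frequency element of each triple in the $Y^0$-slot. This produces a product of two $\delta$-gains together with $\Vert P_{N_1}u_1\Vert_{Y^0}$, $\Vert P_M v\Vert_{Y^0}$, and $Z'$-norms on the remaining four pieces. Using the frequency-localized identities $\Vert P_{N_1}u_1\Vert_{Y^0}\sim N_1^{-1}\Vert P_{N_1}u_1\Vert_{Y^1}$ and $\Vert P_M v\Vert_{Y^0}\sim M\Vert P_M v\Vert_{Y^{-1}}$, the product carries an overall factor $M/N_1\le 1$, and the $\delta$-gains coming from Lemma \ref{NewS32} make the dyadic sum over $(M,N_2,N_3,N_4,N_5)$ convergent via Cauchy-Schwarz, after which the $\ell^2_{N_1}$-summation returns $\Vert u_1\Vert_{X^1}$ and collects the $Z'$-norms on the other four factors.

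The refined estimate \eqref{NLEst3} is obtained by the same scheme with a crucial simplification: the constraints $P_Bu_1=u_1$ and $P_{\le DB}u_j=u_j$ for $j\ge 2$ already force $u_1$ to be the top-frequency factor up to the fixed constant $D$, so no permutation sum is needed and the single pairing $(u_1,u_2,u_3)$ versus $(v,u_4,u_5)$ suffices, yielding the stronger bound with $u_1$ alone in the $X^1$-slot.

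The main technical obstacle is the case where the dual frequency $M$ is much smaller than some of the $N_j$ with $j\ge 4$, so that the natural pairing fails to put $v$ in the top-frequency slot of its triple and a direct application of \eqref{NLEstTor} would demand the uncontrollable quantity $\Vert P_M v\Vert_{Z'}$. In that regime I would reshuffle into an alternative three-three partition (for instance $(v,u_3,u_5)$ versus $(u_1,u_2,u_4)$) so that $v$ is again the highest-frequency element of its triple, or invoke the fully-$Z$ bound \eqref{Al1} combined with the interpolation structure $Z'=Z^{1/2}X^{1/2}$; the freedom among the ten possible three-three partitions of the six functions is enough to cover every frequency configuration.
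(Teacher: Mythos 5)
Your overall architecture -- duality via \eqref{EstimNnorm}, Littlewood--Paley decomposition, the degree constraint on products of spherical harmonics, Cauchy--Schwarz into two triples, Lemma \ref{NewS32}, and the conversions $\Vert P_Bu_1\Vert_{Y^0}\sim B^{-1}\Vert P_Bu_1\Vert_{Y^1}$, $\Vert P_Mv\Vert_{Y^0}\sim M\Vert P_Mv\Vert_{Y^{-1}}$ -- is exactly the route the paper intends (it proves the lemma by adapting \cite[Section 3]{IoPa2}, and your scheme is that of \cite[Lemma 3.2]{IoPa2}). The problem is that your treatment of the one genuinely nontrivial configuration is wrong as stated. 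In the regime where the dual frequency $M$ lies below \emph{all} of $N_2,\dots,N_5$, no reshuffling among the three--three partitions can help: every partition places $v$ in a triple together with two of the $u_k$'s, each of strictly larger frequency, so $v$ is never the top element of its triple; it cannot go in the $Y^0$-slot of \eqref{NLEstTor}, and it cannot go in a $Z'$-slot either, since $\Vert v\Vert_{Y^{-1}}\le 1$ gives no $Z'$ control. So the claim that ``the freedom among the ten possible three--three partitions is enough to cover every frequency configuration'' is false precisely where the argument is delicate.

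Your alternative suggestion, invoking \eqref{Al1}, is indeed the intended tool (this is why the all-$Z$ estimate is part of Lemma \ref{NewS32}, mirroring the second estimate of \cite[Lemma 3.1]{IoPa2}), but as written it is not a proof: \eqref{Al1} requires all three factors in $Z$, and you never explain how the dual function, controlled only in $Y^{-1}$, may occupy a $Z$-slot. The missing mechanism is that for a single dyadic piece one has $\Vert P_Mv\Vert_{Z}\lesssim M\Vert P_Mv\Vert_{Y^{0}}\lesssim M^{2}\Vert P_Mv\Vert_{Y^{-1}}$ (via Corollary \ref{NewS31} and $V^2\hookrightarrow U^p$), so applying \eqref{Al1} to $(P_{N_4}u_4,P_{N_5}u_5,P_Mv)$ with $M\le N_5\le N_4$ costs $M^{10/p_0}N_4^{1/2-5/p_0}N_5^{1/2-5/p_0}$ against $\Vert P_Mv\Vert_{Y^{-1}}\Vert u_4\Vert_{Z}\Vert u_5\Vert_{Z}$; one must then check that, combined with the factor $B^{-1}$ from placing $P_Bu_1$ in the $Y^0$-slot of the other triple, the sum over $M\le N_5\le N_4\lesssim_D B$ produces a kernel of size $O_D(M/B)$, leaving enough off-diagonal decay in $(M,B)$ for the final Schur/Cauchy--Schwarz step against $\Vert P_Mv\Vert_{Y^{-1}}$ and $\Vert P_Bu_1\Vert_{X^1}$ (it does, since $1/2-5/p_0<0$ and $1-10/p_0<0$ for $p_0=4+1/10$, but this numerology is the heart of the case and must be verified, not merely gestured at). A further, more routine, caution: since $\sum_N\Vert P_Nu_j\Vert_{Z'}$ is not controlled by $\Vert u_j\Vert_{Z'}$, the dyadic summations in your ``easy'' case must genuinely run through the $\delta$-gains of \eqref{NLEstTor} (and the $D$-dependent comparability of $B$ with frequencies up to $2DB$ in \eqref{NLEst3}), rather than a crude triangle inequality.
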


We have a local existence result:

\begin{proposition}[Local well-posedness]\label{LWP}
(i) Given $E>0$, there exists $\delta_0=\delta_0(E)>0$ such that if $\Vert \phi\Vert_{H^1(\mathbb{S}^3)}\le E$ and
\begin{equation*}
\Vert e^{itL}\phi\Vert_{Z(I)}\le\delta_0
\end{equation*}
on some interval $I\ni 0$, $\vert I\vert\leq 1$, then there exists a unique solution $u\in X^1(I)$ of \eqref{NLSBis} satisfying $u(0)=\phi$. Besides
\begin{equation*}
\Vert u-e^{itL}\phi\Vert_{X^1(I)}\lesssim_E \Vert e^{itL}\phi\Vert_{Z(I)}^{3/2}.
\end{equation*}
The quantities $E(u)$ and $M(u)$ defined in \eqref{conserve} are conserved on $I$. 

(ii) If $u\in X^1(I)$ is a solution of \eqref{NLSBis} on some open interval $I$ and 
\begin{equation*}
\Vert u\Vert_{Z(I)}<+\infty
\end{equation*}
then $u$ can be extended as a nonlinear solution to a neighborhood of $\overline{I}$ and
\begin{equation*}
\Vert u\Vert_{X^1(I)}\le C(E(u),\Vert u\Vert_{Z(I)})
\end{equation*}
for some constant $C$ depending on $E(u)$ and $\Vert u\Vert_{Z(I)}$.
\end{proposition}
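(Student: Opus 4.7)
My plan is to follow the standard contraction-mapping scheme for the Herr--Tataru--Tzvetkov function spaces, as developed for the sphere in \cite{He} and for the torus in \cite{IoPa2}; the only genuinely new input is the quintic estimate of Lemma \ref{NLEst2}.

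For part (i), I would look for a fixed point of the Duhamel map
\[
\Phi(u)(t) := e^{itL}\phi + i\int_0^t e^{i(t-s)L}\bigl(u|u|^4\bigr)(s)\,ds
\]
in the closed ball $B_\eta := \{ u\in X^1(I) : \|u - e^{itL}\phi\|_{X^1(I)} \le \eta\}$, for a radius $\eta$ to be chosen. By Proposition \ref{Alex3}, one has $\|\Phi(u) - e^{itL}\phi\|_{X^1(I)} = \|u|u|^4\|_{N(I)}$, which Lemma \ref{NLEst2} controls by $\|u\|_{X^1(I)}\|u\|_{Z'(I)}^4 = \|u\|_{X^1(I)}^3\|u\|_{Z(I)}^2$. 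The triangle inequality together with Corollary \ref{NewS31} gives, on $B_\eta$, the bounds $\|u\|_{Z(I)} \lesssim \delta_0+\eta$ and $\|u\|_{X^1(I)} \lesssim E+\eta$. Choosing $\eta \sim_E \delta_0^{3/2}$ and then $\delta_0$ small enough depending on $E$ makes $\Phi$ stabilize $B_\eta$ and yields the quantitative bound claimed in the proposition. A parallel estimate for $\Phi(u)-\Phi(v)$, obtained by expanding the quintic difference into five multilinear terms and invoking the full strength of Lemma \ref{NLEst2} (including \eqref{NLEst3}), produces contraction, hence a unique fixed point. Conservation of $E(u)$ and $M(u)$ is then obtained by approximating $\phi$ by $H^s$ data with $s>1$, for which the equation can be solved classically with the usual conservation identities, and then passing to the $H^1$ limit by the uniqueness just established.

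For part (ii), write $L_0 := \|u\|_{Z(I)}<\infty$. The norm $Z$ is built from $L^{p_0}_{x,t}$ and $L^{p_1}_{x,t}$ pieces and is therefore divisible in time, so I can partition $I$ into $K$ consecutive subintervals $I_1,\dots,I_K$ on each of which $\|u\|_{Z(I_j)} \le \delta_0(E(u))/2$, with $K$ controlled by $L_0$ and $E(u)$. Energy conservation gives $\|u(t_j)\|_{H^1} \lesssim E(u)^{1/2}$ at every endpoint. On each $I_j$, a continuity argument based on
\[
\|u\|_{X^1([t_j,t])} \lesssim \|u(t_j)\|_{H^1} + \|u\|_{X^1([t_j,t])}^3\|u\|_{Z([t_j,t])}^2
\]
combined with the smallness of $\|u\|_{Z(I_j)}$ bootstraps to $\|u\|_{X^1(I_j)} \le C(E(u))$. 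Summing these bounds in $\ell^2$ over $j=1,\dots,K$ yields $\|u\|_{X^1(I)} \le C(E(u),L_0)$. The $X^1(I)$ bound in turn forces $u(t)$ to extend continuously in $H^1$ to the endpoints of $\overline{I}$, and applying part (i) at those endpoints produces a nonlinear extension to a neighborhood of $\overline{I}$.

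The main obstacle, in my view, is the bookkeeping of exponents in the closing argument rather than any deep analytic step: the norm $Z'$ is tailored to distribute one factor of $Z$ smallness and one factor of $X^1$ coercivity across each of four quintic factors, and one must check that this distribution is simultaneously compatible with stabilization of the ball $B_\eta$, with the contraction estimate for $\Phi(u)-\Phi(v)$, and with the desired $\delta_0^{3/2}$ quantification. A secondary subtlety, already highlighted in \cite{IoPa,IoPa2}, is the unavoidable use of the auxiliary $Z$ norm: the $U^p/V^p$-based norm $X^1$ is not divisible in time, whereas $Z$ is, and it is precisely this divisibility that makes the finite partition in part (ii) possible.
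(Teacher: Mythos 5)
Your proposal is correct and follows essentially the same route as the paper, which proves Proposition \ref{LWP} by the straightforward adaptation of the contraction-mapping/bootstrap scheme of \cite[Section 3]{IoPa2}, with Lemma \ref{NLEst2} playing the role of the quintic estimate and the divisibility of the $Z$-norm driving part (ii). The exponent bookkeeping you flag is indeed the only point requiring care (note the $Z'$ factors of the free evolution are only $O_E(\delta_0^{1/2})$ each, which is exactly why the stated gain is $\delta_0^{3/2}$ rather than $\delta_0^{2}$), and your choices close the argument as in \cite{IoPa2}.
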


The main result in this section is the following:

\begin{proposition}[Stability]\label{Stabprop}
Assume $I$ is an open bounded interval, $\rho\in[-1,1]$, and $\widetilde{u}\in X^1(I)$ satisfies the approximate  Schr\"{o}dinger equation
\begin{equation}\label{ANLS}
(i\partial_t+L)\widetilde{u}+\rho\widetilde{u}|\widetilde{u}|^4=e\quad\text{ on }\mathbb{S}^3\times I.
\end{equation}
Assume in addition that
\begin{equation}\label{ume}
\|\widetilde{u}\|_{Z(I)}+\|\widetilde{u}\|_{L^\infty_t(I,H^1(\mathbb{S}^3))}\leq M,
\end{equation}
for some $M\in[1,\infty)$. Assume $t_0 \in I$ and $u_0\in H^1(\mathbb{S}^3)$ is such that the smallness condition
\begin{equation}\label{safetycheck}
\|u_0 - \widetilde{u}(t_0)\|_{H^1(\mathbb{S}^3)}+\| e\|_{N(I)}\leq \epsilon
\end{equation}
holds for some $0 < \epsilon < \epsilon_1$, where $\epsilon_1\leq 1$ is a small constant $\epsilon_1 = \epsilon_1(M) > 0$.

Then there exists a strong solution $u\in X^1(I)$ of the Schr\"{o}dinger equation
\begin{equation}\label{ANLS2}
(i\partial_t+L)u+\rho u|u|^4=0
\end{equation}
 such that $u(t_0)=u_0$ and
\begin{equation}\label{output}
\begin{split}
\| u \|_{X^1(I)}+\|\widetilde{u}\|_{X^1(I)}&\leq C(M),\\
\| u - \widetilde u \|_{X^1(I)}&\leq C(M)\epsilon.
 \end{split}
\end{equation}
\end{proposition}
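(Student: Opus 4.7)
The approach is standard perturbation theory. The plan is to partition $I$ into finitely many subintervals on which $\widetilde u$ is essentially linear, run a fixed-point argument for $w = u - \widetilde u$ on each subinterval, and iterate, carefully tracking how the mismatch amplifies across subintervals.

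First, using divisibility of the $Z$-norm, visible from \eqref{NewS319}, I partition $I$ into $J = J(M)$ consecutive subintervals $I_1,\dots,I_J$, each of length at most $1$, on which $\|\widetilde u\|_{Z(I_j)}\le \delta$, where $\delta = \delta(M)>0$ is a small constant to be fixed. A preliminary step establishes the a priori bound $\|\widetilde u\|_{X^1(I_j)}\le C(M)$: writing $\widetilde u$ via Duhamel for \eqref{ANLS}, applying \eqref{EstimNnorm} and Lemma \ref{NLEst2} (with one factor in $X^1$ and the other four in $Z'$) yields $\|F(\widetilde u)\|_{N(I_j)}\lesssim \|\widetilde u\|_{X^1(I_j)}\|\widetilde u\|_{Z'(I_j)}^4\lesssim \delta^2 \|\widetilde u\|_{X^1(I_j)}^3$ via the identity $\|v\|_{Z'}^2 = \|v\|_Z\|v\|_{X^1}$, and a continuity argument closes the bound provided $\delta$ is small enough depending on $M$.

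On each subinterval I look for $w$ as a solution of
$$w(t) = e^{i(t-t_{j-1})L}w(t_{j-1}) - i\rho\int_{t_{j-1}}^t e^{i(t-s)L}\bigl[F(\widetilde u + w) - F(\widetilde u)\bigr](s)\,ds - i\int_{t_{j-1}}^t e^{i(t-s)L}e(s)\,ds.$$
The difference $F(\widetilde u + w)-F(\widetilde u)$ expands into quintic monomials each containing at least one factor of $w$ or $\overline w$. Bounding the corresponding $N(I_j)$-norms by Lemma \ref{NLEst2} (placing $w$ in $X^1$ and the remaining factors in $Z'$) gives
$$\|w\|_{X^1(I_j)} \le C\bigl(\epsilon_j + \|e\|_{N(I_j)}\bigr) + C\bigl(\|\widetilde u\|_{Z'(I_j)}+\|w\|_{X^1(I_j)}\bigr)^4 \|w\|_{X^1(I_j)},$$
with $\epsilon_j := \|u(t_{j-1}) - \widetilde u(t_{j-1})\|_{H^1}$. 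Since $\|\widetilde u\|_{Z'(I_j)}^4\lesssim \delta^2 C(M)^2$, a contraction/continuity argument (for $\delta = \delta(M)$ sufficiently small) produces a unique $w\in X^1(I_j)$ satisfying $\|w\|_{X^1(I_j)}\le 2C(\epsilon_j + \|e\|_{N(I_j)})$, which yields existence of $u$ on $I_j$ and controls $\epsilon_{j+1}\le \|w\|_{X^1(I_j)}$.

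Iterating from $I_1$ to $I_J$, the input mismatch grows geometrically, $\epsilon_j\le (2C)^{j-1}\epsilon$, so $\|u - \widetilde u\|_{X^1(I)}\le (2C)^J \epsilon$. Since $J$ and $C$ depend only on $M$, this is the announced $C(M)\epsilon$, and \eqref{output} follows. The main obstacle is precisely this geometric amplification: one must ensure the mismatch $\epsilon_j$ stays below the contraction threshold at every step, which is enforced by choosing $\epsilon_1(M)$ small enough that $(2C)^J\epsilon_1(M)$ does not exceed the smallness required in the contraction step above. The remaining work, namely expanding the nonlinearity into the $2^5-1$ cross-terms and applying Lemma \ref{NLEst2} to each, is the direct analogue of the bookkeeping in \cite[Section 3]{IoPa2}.
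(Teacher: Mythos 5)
Your proposal is correct and follows essentially the same route as the paper, which proves this proposition by ``straightforward adaptation'' of \cite[Section 3]{IoPa2}: divisibility of the $Z$-norm, the nonlinear estimate of Lemma \ref{NLEst2}, a contraction/bootstrap on subintervals where $\|\widetilde{u}\|_{Z}$ is small, and geometric amplification of the error over $J(M)$ steps, absorbed by taking $\epsilon_1(M)$ small. The only slight imprecision is that Lemma \ref{NLEst2} gives a symmetrized bound, so the terms linear in $w$ also carry a factor $\|\widetilde{u}\|_{X^1(I_j)}\leq C(M)$ rather than only powers of $\|\widetilde{u}\|_{Z'(I_j)}$, but your choice of $\delta=\delta(M)$ already absorbs this.
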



\section{Profiles}\label{SecProf}

\subsection{Analysis of Euclidean profiles}

In this section we prove precise estimates showing how to compare Euclidean and spherical solutions of both linear and nonlinear Schr\"{o}dinger equations. Of course, such a comparison is only meaningful in the case of rescaled data that concentrate at a point. We follow closely the arguments in \cite{IoPa,IoPaSt}, the main novelty being in Lemma \ref{Extinction}.

Recall $\eta$ defined\footnote{The role of $\eta$ is to avoid ``tail'' effects coming from the fact that $\phi$ might not vanish outside of $B(0,R)$ for any $R$.} in \eqref{DefPN}. Given $\phi\in \dot{H}^1(\mathbb{R}^3)$ and a real number $N\geq 1$ we define
\begin{equation}\label{rescaled}
\begin{split}
T_N\phi=f_{N}\in H^1(\mathbb{S}^3),\qquad &f_{N}(y)=N^\frac{1}{2}\eta(N^{1/2}d_g(O,y))\phi(N\exp_O^{-1}(y)).
\end{split}
\end{equation}
and observe that
\begin{equation*}
T_N:\dot{H}^1(\mathbb{R}^3)\to H^1(\mathbb{S}^3)\text{ is a linear operator with }\|T_N\phi\|_{H^1(\mathbb{S}^3)}\lesssim \|\phi\|_{\dot{H}^1(\mathbb{R}^3)}
\end{equation*}
and that
\begin{equation*}
\Vert T_N\phi\Vert_{L^1}\lesssim N^{-\frac{5}{2}}\Vert \phi\Vert_{L^1},\quad\Vert T_N\phi\Vert_{L^2}\lesssim N^{-1}\Vert \phi\Vert_{L^2}.
\end{equation*}
We define also
\begin{equation*}
E_{\mathbb{R}^3}(\phi)=\frac{1}{2}\int_{\mathbb{R}^3}\left[|\nabla_{\mathbb{R}^3}\phi|^2+\frac{1}{3}|\phi|^6\right]\,dx.
\end{equation*}

We will use the main theorem of \cite{CKSTTcrit} (see also \cite{KiVi} and \cite{B,G,KeMe} for previous results), in the following form.

\begin{theorem}\label{MainThmEucl}
Assume $\psi\in\dot{H}^1(\mathbb{R}^3)$. Then there is a unique global solution $v\in C(\mathbb{R}:\dot{H}^1(\mathbb{R}^3))$ of the initial-value problem
\begin{equation}\label{clo3}
(i\partial_t-\Delta_{\mathbb{R}^3})v+v|v|^4=0,\qquad v(0)=\psi,
\end{equation}
and
\begin{equation}\label{clo4}
\Vert v\Vert_{L^4_tL^\infty_x(\mathbb{R}^3\times\mathbb{R}))}+\|\nabla_{\mathbb{R}^3} v\|_{(L^\infty_tL^2_x\cap L^2_tL^6_x)(\mathbb{R}^3\times\mathbb{R})}\leq \widetilde{C}(E_{\mathbb{R}^3}(\psi)).
\end{equation}
Moreover this solution scatters in the sense that there exists $\psi^{\pm\infty}\in\dot{H}^1(\mathbb{R}^3)$ such that
\begin{equation}\label{EScat}
\Vert v(t)-e^{-it\Delta}\psi^{\pm\infty}\Vert_{\dot{H}^1(\mathbb{R}^3)}\to 0
\end{equation}
as $t\to\pm\infty$. Besides, if $\psi\in H^5(\mathbb{R}^3)$ then $v\in C(\mathbb{R}:H^5(\mathbb{R}^3))$ and
\begin{equation*}
\sup_{t\in\mathbb{R}}\|v(t)\|_{H^5(\mathbb{R}^3)}\lesssim_{\|\psi\|_{H^5(\mathbb{R}^3)}}1.
\end{equation*}
\end{theorem}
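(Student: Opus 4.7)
The statement is essentially the combined result of \cite{CKSTTcrit} (global existence and the spacetime bound \eqref{clo4}), together with the Kenig--Merle reformulation \cite{KeMe} (scattering); I would follow the induction-on-energy / concentration-compactness scheme. First, Strichartz estimates in $\mathbb{R}^3$ and a contraction mapping argument in the Strichartz space $L^{10}_{t,x}$ yield local well-posedness together with a small-data global theory: if $\Vert e^{it\Delta}\psi\Vert_{L^{10}_{t,x}(\mathbb{R}^3\times\mathbb{R})}<\eta_0$ is sufficiently small, then the solution of \eqref{clo3} is global, scatters in $\dot{H}^1$, and satisfies \eqref{clo4} with a universal constant. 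In particular this handles the case $E_{\mathbb{R}^3}(\psi)\le\eta_1$ for some absolute $\eta_1>0$.

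Next, one performs induction on energy. Set
\[
E_c:=\sup\{E\ge 0:\text{\eqref{clo4} holds for all }\psi\in\dot H^1(\mathbb{R}^3)\text{ with }E_{\mathbb{R}^3}(\psi)\le E\}
\]
and assume toward a contradiction that $E_c<\infty$. Choose $\psi_n\in\dot H^1$ with $E_{\mathbb{R}^3}(\psi_n)\nearrow E_c$ whose corresponding solutions $v_n$ satisfy $\Vert v_n\Vert_{L^{10}_{t,x}}\to\infty$. Applying the linear profile decomposition of Keraani (itself based on Bahouri--G\'erard) and combining with a nonlinear perturbation/stability argument, while invoking the inductive hypothesis on all profiles of strictly smaller energy, one reduces to the case of a single profile. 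This produces a minimal blow-up solution $u_c$ with $E_{\mathbb{R}^3}(u_c)=E_c$ and $\Vert u_c\Vert_{L^{10}_{t,x}}=\infty$, whose orbit is precompact in $\dot H^1(\mathbb{R}^3)$ modulo the symmetries of translation and scaling; equivalently $u_c$ is almost periodic modulo symmetries with a scale function $N(t)$.

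The main obstacle is then to rule out $u_c$. Following Kenig--Merle, the almost periodic object $u_c$ is classified according to the behavior of $N(t)$ into three scenarios: soliton-like ($N(t)\sim 1$), self-similar, and low-to-high frequency cascade. A frequency-localized interaction Morawetz estimate, combined with the almost periodicity of $u_c$, yields a uniform spacetime bound that is incompatible with each scenario, producing the desired contradiction; this frequency-localized Morawetz analysis is the technical heart of \cite{CKSTTcrit}. Once \eqref{clo4} is established, the scattering statement \eqref{EScat} is standard: the Duhamel formula together with the spacetime bound shows that $\{e^{-it\Delta}v(t)\}$ is Cauchy in $\dot H^1$ as $t\to\pm\infty$. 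Finally, the persistence of $H^5$ regularity follows by a bootstrap using \eqref{clo4}: partition $\mathbb{R}$ into finitely many intervals $I_j$ on each of which $\Vert v\Vert_{L^4_tL^\infty_x(I_j)}$ is small, so that the energy-type estimate
\[
\Vert v\Vert_{L^\infty_tH^5(I_j)}\lesssim \Vert v(t_j)\Vert_{H^5}+\Vert v\Vert_{L^4_tL^\infty_x(I_j)}^4\Vert v\Vert_{L^\infty_tH^5(I_j)}
\]
closes on each $I_j$ and iterates to bound $v$ in $C(\mathbb{R}:H^5)$.
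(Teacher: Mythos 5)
The paper does not prove Theorem \ref{MainThmEucl} at all: it is imported as a black box, quoted as ``the main theorem of \cite{CKSTTcrit} (see also \cite{KiVi} and \cite{B,G,KeMe})'', so there is no in-paper argument to compare yours against. What you have written is a correct roadmap of how that theorem is proved in the literature, and it is worth noting that it blends two historically distinct routes: the original proof in \cite{CKSTTcrit} is an induction on energy in Bourgain's style, extracting a frequency-localized, spatially concentrated near-minimal-energy blowup scenario by direct harmonic analysis and ruling it out with the frequency-localized interaction Morawetz estimate, whereas the reduction via Keraani-type linear profile decomposition to a minimal almost periodic solution with scale function $N(t)$ and the trichotomy (soliton-like, self-similar, cascade) is the later Kenig--Merle/Killip--Visan repackaging \cite{KeMe,KiVi}. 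Either route is legitimate, but at this granularity your text is a citation-level summary rather than a proof: the analytic core (the frequency-localized interaction Morawetz estimate and the preclusion of each scenario) is named, not supplied, which is exactly the status it has in the paper as well. The pieces you add on top of the cited result are fine and standard: smallness of $\Vert e^{it\Delta}\psi\Vert_{L^{10}_{t,x}}$ for small energy via Strichartz, scattering \eqref{EScat} from the global spacetime bound \eqref{clo4} and Duhamel, and the $H^5$ persistence by partitioning $\mathbb{R}$ into finitely many intervals where $\Vert v\Vert_{L^4_tL^\infty_x}$ is small and closing a Moser-type energy estimate (this last bootstrap is essentially the argument one would write if the paper had spelled it out; the same remark applies to the observation, used later in the paper, that everything persists when $-\Delta_{\mathbb{R}^3}$ is replaced by $1-\Delta_{\mathbb{R}^3}$).
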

Again, we emphasize that this extends readily to the case when $-\Delta_{\mathbb{R}^3}$ is replaced by $1-\Delta_{\mathbb{R}^3}$.

Our first result in this section is the following lemma:

\begin{lemma}\label{step1}
Assume $\phi\in\dot{H}^1(\mathbb{R}^3)$, $T_0\in(0,\infty)$, and $\rho\in\{0,1\}$ are given, and define $f_{N}$ as in \eqref{rescaled}. Then the following conclusions hold:

(i) There is $N_0=N_0(\phi,T_0)$ sufficiently large such that for any $N\geq N_0$ there is a unique solution $U_{N}\in C((-T_0N^{-2},T_0N^{-2}):H^1(\mathbb{S}^3))$ of the initial-value problem
\begin{equation}\label{clo5}
(i\partial_t-\Delta+1)U_N=\rho U_N|U_N|^4,\qquad U_N(0)=f_N.
\end{equation}

(ii) Assume $\varepsilon_1\in(0,1]$ is sufficiently small (depending only on $E_{\mathbb{R}^3}(\phi)$), $\phi'\in H^5(\mathbb{R}^3)$, and $\|\phi-\phi'\|_{\dot{H}^1(\mathbb{R}^3)}\leq\varepsilon_1$. Let $v'\in C(\mathbb{R}:H^5(\mathbb{R}^3))$ denote the solution of the initial-value problem
\begin{equation*}
(i\partial_t-\Delta_{\mathbb{R}^3}+1)v'=\rho v'|v'|^4,\qquad v'(0)=\phi'.
\end{equation*}
For $R,N\geq 1$ we define
\begin{equation}\label{clo9}
\begin{split}
&v'_R(x,t)=\eta(\vert x\vert/R)v'(x,t),\qquad\,\,\qquad (x,t)\in\mathbb{R}^3\times(-T_0,T_0),\\
&v'_{R,N}(x,t)=N^\frac{1}{2}v'_R(Nx,N^2t),\qquad\quad\,(x,t)\in\mathbb{R}^3\times(-T_0N^{-2},T_0N^{-2}),\\
&V_{R,N}(y,t)=v'_{R,N}(\exp_O^{-1}(y),t)\qquad\quad\,\, (y,t)\in\mathbb{S}^3\times(-T_0N^{-2},T_0N^{-2}).
\end{split}
\end{equation}
Then there is $R_0\geq 1$ (depending on $T_0$ and $\phi'$ and $\varepsilon_1$) such that, for any $R\geq R_0$,
\begin{equation}\label{clo18}
\limsup_{N\to\infty}\|U_N-V_{R,N}\|_{X^1(-T_0N^{-2},T_0N^{-2})}\lesssim_{E_{\mathbb{R}^3}(\phi)}\varepsilon_1.
\end{equation}
In particular, for any $N\geq N_0$,
\begin{equation}\label{clo6}
\|U_N\|_{X^1(-T_0N^{-2},T_0N^{-2})}\lesssim_{E_{\mathbb{R}^3}(\phi)}1.
\end{equation}

\end{lemma}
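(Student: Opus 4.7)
The strategy, following \cite{IoPa, IoPaSt}, will be to build $V_{R,N}$ as the pullback via the exponential chart at $O$ of a truncated, rescaled global Euclidean solution, verify that it is a good approximate solution of \eqref{clo5} on the short interval $I := (-T_0N^{-2}, T_0N^{-2})$, and conclude by the stability Proposition \ref{Stabprop}.

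First I would invoke Theorem \ref{MainThmEucl} (applied to the $+1$ version of the Euclidean equation) with initial data $\phi' \in H^5$ to obtain a global $v' \in C(\mathbb{R}, H^5(\mathbb{R}^3))$ with uniform $H^5$ and Strichartz bounds on $[-T_0, T_0]$ controlled by $E_{\mathbb{R}^3}(\phi)$ and $\|\phi'\|_{H^5}$. Since $v'$ scatters, the spatially truncated $v'_R := \eta(|x|/R) v'$ satisfies $(i\partial_t - \Delta_{\mathbb{R}^3} + 1) v'_R = \rho v'_R|v'_R|^4 + E_R$, with a commutator error $E_R$ supported near $|x| \sim R$ whose Strichartz-dual norm on $[-T_0,T_0]$ tends to $0$ as $R \to \infty$, and $\|v'_R(0) - \phi'\|_{\dot{H}^1} \to 0$. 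Rescaling to $v'_{R,N}$ and pulling back to $V_{R,N}$ via $\exp_O$ is well-defined for $N \geq N_0(R)$, and the energy-critical scaling yields $\|V_{R,N}\|_{X^1(I)}$ and $\|V_{R,N}\|_{Z(I)}$ bounded uniformly in $N$ by a constant depending only on $E_{\mathbb{R}^3}(\phi)$.

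The heart of the argument will be to show that the error $e := (i\partial_t + L) V_{R,N} - \rho V_{R,N}|V_{R,N}|^4$ is small in $N(I)$. In exponential coordinates $e$ splits into three pieces: (a) the pulled-back and rescaled contribution from $E_R$, which inherits its smallness and is $o_R(1)$; (b) the metric defect $(\Delta_{\mathbb{R}^3} - \Delta_{\mathbb{S}^3}) v'_{R,N}$, whose principal symbol is $O(|x|^2)$ times a second-order operator, producing $O((R/N)^2)$ on the support $|x| \lesssim R/N$ and hence $o_N(1)$ for fixed $R$ after integration on $I$; and (c) a zero-order error from the mismatch between the $+1$ in $L$ and the effective $+N^2$ produced by rescaling the $+1$ in the Euclidean equation. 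Piece (c) is the main obstacle. I would dispose of it by passing to the gauge-equivalent $w' := e^{-it} v'$, which satisfies the \emph{massless} Euclidean NLS; the correspondingly rescaled-and-pulled-back object $W_{R,N}$ has residual mass error merely $+W_{R,N}$, whose $L^1_t H^1(I)$ norm is $\lesssim T_0 N^{-2}$. The relation $V_{R,N} = e^{iN^2 t} W_{R,N}$ translates the estimate back, and the global phase $e^{iN^2 t}$ is harmless in the $X^1$-type norms (built from the linear flow $e^{itL}$ which diagonalizes the spherical-harmonic basis) since $V_{R,N}$ is frequency-localized near $N$.

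Finally, I would compare the initial data: using the boundedness of $T_N: \dot{H}^1(\mathbb{R}^3) \to H^1(\mathbb{S}^3)$ recorded below \eqref{rescaled} and the convergence of the differing cutoffs at scales $N^{-1/2}$ (for $f_N$) and $R/N$ (for $V_{R,N}(0)$), one obtains $\|V_{R,N}(0) - f_N\|_{H^1} \lesssim \|\phi' - \phi\|_{\dot{H}^1} + o_{R,N}(1) \leq \varepsilon_1 + o_{R,N}(1)$. Applying Proposition \ref{Stabprop} with $\widetilde{u} = V_{R,N}$, $M = C(E_{\mathbb{R}^3}(\phi))$, and $\epsilon \leq 2\varepsilon_1$ then yields, for $R$ large and then $N$ large, a solution $U_N \in X^1(I)$ of \eqref{clo5} with $\|U_N - V_{R,N}\|_{X^1(I)} \lesssim_{E_{\mathbb{R}^3}(\phi)} \varepsilon_1$, proving \eqref{clo18}; the a priori bound \eqref{clo6} follows by the triangle inequality together with the uniform $X^1(I)$-control on $V_{R,N}$.
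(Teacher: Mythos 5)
Your overall strategy coincides with the paper's: truncate the Euclidean solution, rescale and transplant through $\exp_O$ as in \eqref{clo9}, estimate the resulting error in $L^1_tH^1$ (which controls the $N(I)$-norm through \eqref{EstimX1Norm}), compare the initial data as in \eqref{clo17}, and close with Proposition \ref{Stabprop}; your pieces (a) and (b) are exactly the paper's rescaled cutoff error \eqref{clo11} and the metric defect computed from \eqref{Delta}. One small caveat: the uniform $X^1(I)$ bound on the ansatz is not a pure scaling statement (the $X^1$ norm is not manifestly invariant under this rescaling); the paper derives it from \eqref{EstimX1Norm} applied to the equation satisfied by $V_{R,N}$ together with \eqref{clo16}.

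The genuine problem is the last step of your piece (c). Gauging to $w'=e^{-it}v'$ and checking that the transplanted $W_{R,N}$ is an approximate solution whose residual mass error $W_{R,N}$ has $L^1_tH^1(I)$ norm $O(T_0N^{-2})$ is correct, and stability then gives $\Vert U_N-W_{R,N}\Vert_{X^1(I)}\lesssim_{E_{\mathbb{R}^3}(\phi)}\varepsilon_1$. But the modulation $e^{iN^2t}$ is not harmless: $X^1(I)$ controls $L^\infty_tH^1$, and for $\vert t\vert\simeq N^{-2}$ one has $\vert e^{iN^2t}-1\vert\simeq 1$, so $\Vert V_{R,N}-W_{R,N}\Vert_{X^1(I)}=\Vert(e^{iN^2t}-1)W_{R,N}\Vert_{X^1(I)}$ is of order one (comparable to the energy of the profile) once $T_0$ is of size one or larger. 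Frequency localization near $N$ does not save this: the modulation shifts the temporal frequency by $N^2$, which is comparable to the frequencies $k^2$, $k\simeq N$, carried by the profile itself, so it changes the function by $O(1)$ rather than $o(1)$. Thus your argument proves the approximation for the gauged object, i.e.\ for $e^{-iN^2t}V_{R,N}$, and the ``translation back'' to $V_{R,N}$ is precisely the step that fails. To be fair, this is a point on which the paper is itself cavalier: its identity $e_{R,N}=N^{5/2}e_R(Nx,N^2t)$ treats the $+1$ in the operator as if it rescaled onto itself, silently discarding the term $(1-N^2)v'_{R,N}$, so the lemma should be read with the Euclidean model taken massless (equivalently, with the compensating phase built into \eqref{clo9}) --- which is exactly what your gauge change produces. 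The correct repair is therefore to keep the gauged ansatz throughout, not to claim the phase can be reinstated for free.
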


\begin{remark}
As is shown in \cite[Appendix A]{BuGeTz3} (see also \cite{ChCoTa}), for times $0\le t\ll N^{-2}$, the effect of the dispersion is weak and a good approximation for \eqref{NLSBis} is the simple $ODE$
\begin{equation*}
i\partial_tu=u\vert u\vert^4-u.
\end{equation*}
This lemma shows how to take into account the effect of the dispersion on the interval $[N^{-2},TN^{-2}]$ for $T$ large, so as to complement the conclusion of Lemma \ref{Extinction} below. 
\end{remark}

\begin{proof}[Proof of Lemma \ref{step1}] In fact, we show that $V_{R,N}$ in $(ii)$ gives such a good ansatz that we can apply the stability Proposition \ref{Stabprop} and obtain \eqref{clo18}, which in particular implies $(i)$. All of the constants in this proof are allowed to depend on $E_{\mathbb{R}^3}(\phi)$. Using Theorem \ref{MainThmEucl}
\begin{equation}\label{clo7}
\begin{split}
&\Vert v^\prime\Vert_{L^4_tL^\infty_x(\mathbb{R}\times\mathbb{R}^3)}+\|\nabla_{\mathbb{R}^3} v'\|_{(L^\infty_tL^2_x\cap L^2_tL^6_x)(\mathbb{R}^3\times\mathbb{R})}\lesssim 1,\\
&\sup_{t\in\mathbb{R}}\|v'(t)\|_{H^5(\mathbb{R}^3)}\lesssim_{\|\phi'\|_{H^5(\mathbb{R}^3)}}1.
\end{split}
\end{equation}
Let
\begin{equation*}
\begin{split}
e_R(x,t):&=[(i\partial_t-\Delta_{\mathbb{R}^3}+1)v'_R-\rho v'_R|v'_R|^4](x,t)=\rho(\eta(\vert x\vert/R)-\eta(\vert x\vert/R)^5)v'(x,t)|v'(x,t)|^4\\
&-R^{-2}v'(x,t)\eta^{\prime\prime}(\vert x\vert/R)-2R^{-1}\vert x\vert^{-1}v'(x,t)\eta^{\prime}(\vert x\vert/R)-2R^{-1}\sum_{j=1}^4\partial_rv'(x,t)\eta^\prime(\vert x\vert/R).
\end{split}
\end{equation*}
Since $|v'(x,t)|\lesssim_{\|\phi'\|_{H^5(\mathbb{R}^3)}}1$, see \eqref{clo7}, it follows that
\begin{equation*}
\begin{split}
|e_R(x,t)|&+\sum_{k=1}^3|\partial_ke_R(x,t)|\\
&\lesssim_{\|\phi'\|_{H^5(\mathbb{R}^3)}}\mathbf{1}_{[R,2R]}(|x|)\cdot\big[|v'(x,t)|+\sum_{k=1}^3|\partial_kv'(x,t)|+\sum_{k,j=1}^3|\partial_k\partial_jv'(x,t)|\big].
\end{split}
\end{equation*}
Therefore
\begin{equation}\label{clo10}
\lim_{R\to\infty}\|\,|e_R|+|\nabla_{\mathbb{R}^3} e_R|\,\|_{L^\infty_tL^2_x(\mathbb{R}^3\times(-T_0,T_0))}=0.
\end{equation}
Letting
\begin{equation*}
e_{R,N}(x,t):=[(i\partial_t-\Delta_{\mathbb{R}^3}+1)v'_{R,N}-\rho v'_{R,N}|v'_{R,N}|^4](x,t)=N^\frac{5}{2}e_R(Nx,N^2t),
\end{equation*}
it follows from \eqref{clo10} that there is $R_0\geq 1$ such that, for any $R\geq R_0$ and $N\geq 1$,
\begin{equation}\label{clo11}
\|\,|e_{R,N}|+|\nabla_{\mathbb{R}^3} e_{R,N}|\,\|_{L^1_tL^2_x(\mathbb{R}^3\times(-T_0N^{-2},T_0N^{-2}))}\leq\varepsilon_1.
\end{equation}

With $V_{R,N}(y,t)=v'_{R,N}(\exp_O^{-1}(y),t)$ as in \eqref{clo9} and $N\geq 10R$, let
\begin{equation}\label{clo13}
\begin{split}
E_{R,N}(y,t)&:=[(i\partial_t+L)V_{R,N}-\rho V_{R,N}|V_{R,N}|^4](y,t)\\
&=e_{R,N}(\exp_O^{-1}(y),t)+2(1/\phi-1/\sin \phi)(\partial_\phi v^\prime_{R,N})(\exp_O^{-1}(y),t)\\
&+(1/\phi^2-1/\sin^2\phi)(\Delta_{\mathbb{S}^2}v^\prime_{R,N})(\exp_O^{-1}(y),t)
\end{split}
\end{equation}
where we have used the formula in \eqref{Delta}.
We remark that
\begin{equation*}
\begin{split}
\Vert \phi \partial_\phi v^\prime_{R,N}(\exp_O^{-1}(y),t)\Vert_{L^1_tL^2_x}+\Vert \phi\nabla (\partial_\phi v^\prime_{R,N})(\exp_O^{-1}(y),t)\Vert_{L^1_tL^2_x}\lesssim_{R,T} N^{-2}\\
\Vert \Delta_{\mathbb{S}^2}v^\prime_{R,N}(\exp_O^{-1}(y),t)\Vert_{L^1_tL^2_x}+\Vert\nabla (\Delta_{\mathbb{S}^2}v^\prime_{R,N})(\exp_O^{-1}(y),t)\Vert_{L^1_tL^2_x}\lesssim_{R,T} N^{-2}.
\end{split}
\end{equation*}

Using \eqref{clo11}, it follows that for any $R_0$ sufficiently large there is $N_0$ such that for any $N\geq N_0$
\begin{equation}\label{clo15}
\|\,|\nabla^1 E_{R_0,N}|\,\|_{L^1_tL^2_x(\mathbb{S}^3\times(-T_0N^{-2},T_0N^{-2}))}\leq 2\varepsilon_1.
\end{equation}

To verify the hypothesis \eqref{ume} of Proposition \ref{Stabprop}, we estimate for $N$ large enough, using \eqref{clo7}
\begin{equation}\label{clo16}
\sup_{t\in(-T_0N^{-2},T_0N^{-2})}\|V_{R_0,N}(t)\|_{H^1(\mathbb{S}^3)}\leq \sup_{t\in(-T_0N^{-2},T_0N^{-2})}\|v'_{R_0,N}(t)\|_{H^1(\mathbb{R}^3)}\lesssim 1.
\end{equation}
and using \eqref{EstimX1Norm}, \eqref{clo15} and
\begin{equation*}
\Vert V_{R,N}\vert V_{R,N}\vert^4\Vert_{L^1_tH^1}\lesssim \Vert v^\prime\Vert_{L^4L^\infty_x}^4\Vert v^\prime\Vert_{L^\infty_tH^1_x}\lesssim 1
\end{equation*}
we obtain that
\begin{equation*}
\Vert V_{R,N}\Vert_{X^1}\lesssim 1.
\end{equation*}

Finally, to verify the inequality on the first term in \eqref{safetycheck} we estimate, for $R_0,N$ large enough,
\begin{equation}\label{clo17}
\begin{split}
\|f_N-V_{R_0,N}(0)\|_{H^1(\mathbb{S}^3)}&\lesssim \|\phi_N-v'_{R_0,N}(0)\|_{\dot{H}^1(\mathbb{R}^3)}\lesssim\|\eta(N^\frac{1}{2}\cdot)\phi-v'_{R_0}(0)\|_{\dot{H}^1(\mathbb{R}^3)}\\
&\lesssim \|(1-\eta(N^\frac{1}{2}\cdot))\phi\|_{\dot{H}^1(\mathbb{R}^3)}+\|\phi-\phi'\|_{\dot{H}^1(\mathbb{R}^3)}\\
&\quad+\|\phi'-v'_{R_0}(0)\|_{\dot{H}^1(\mathbb{R}^3)}\\
&\lesssim \varepsilon_1.
\end{split}
\end{equation}
The conclusion of the lemma follows from Proposition \ref{Stabprop}, provided that $\varepsilon_1$ is fixed sufficiently small depending on $E_{\mathbb{R}^3}(\phi)$.
\end{proof}


To understand linear and nonlinear evolutions beyond the Euclidean window we need an additional extinction lemma:

\begin{lemma}\label{Extinction}
Let $\phi\in \dot{H}^1(\mathbb{R}^3)$ and define $f_N$ as in \eqref{rescaled}. For any $\varepsilon>0$, there exists $T=T(\phi,\varepsilon)$ and $N_0(\phi,\varepsilon)$ such that for all $N\ge N_0$, there holds that
\begin{equation}\label{Lem4.4Stat}
\Vert e^{itL}f_{N}\Vert_{Z(TN^{-2},T^{-1})}\lesssim\varepsilon.
\end{equation}
\end{lemma}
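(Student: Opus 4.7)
The plan is to combine a density reduction, a short-time Euclidean approximation on the inner time slab, and the post-Euclidean-window control provided by the forthcoming Lemma \ref{BouLem} on the outer slab.

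By density of smooth Fourier-localized functions in $\dot{H}^1(\mathbb{R}^3)$ and the uniform $Z$-boundedness of $e^{itL}T_N$ deduced from Theorem \ref{Stric}, I would first reduce to the case where $\phi\in C^\infty_c(\mathbb{R}^3)$ has Fourier support in an annulus $\{M^{-1}\le |\xi|\le M\}$; this makes $f_N=T_N\phi$ essentially frequency-localized at scale $k\sim N$ and spatially localized in a ball of radius $R/N$ around $O$. I would then decompose the time interval as $(TN^{-2},T^{-1})=J_1\cup J_2$, with $J_1=(TN^{-2},T_1N^{-2})$ and $J_2=(T_1N^{-2},T^{-1})$, where the intermediate parameter $T_1=T_1(\phi,\varepsilon)$ is fixed first, then $T=T(\phi,\varepsilon)$, and finally $N$ is taken large enough that both subintervals are nonempty.

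On $J_1$ the time scale lies within the Euclidean window, so a linear version of Lemma \ref{step1} (obtained by taking $\rho=0$, which is easier than the nonlinear version since no bootstrap is needed) approximates $e^{itL}f_N$ in $X^1(J_1)$ by the spherical lift $V_{R,N}$ of the rescaled Euclidean linear solution. The scaling change of variables $t=N^{-2}s$, $y=\exp_O(N^{-1}x)$ then yields, for each dyadic frequency $N$,
\[
N^{5-p/2}\,\|P_N V_{R,N}\|_{L^p_{x,t}(\mathbb{S}^3\times J_1)}^p \;\lesssim\; \|e^{-is\Delta}\phi\|_{L^p_{x,s}(\mathbb{R}^3\times(T,T_1))}^p,
\]
and the standard Euclidean dispersive estimate $\|e^{-is\Delta}\phi\|_{L^\infty_x}\lesssim |s|^{-3/2}$ (interpolated with $L^2$-boundedness) ensures $\|e^{-is\Delta}\phi\|_{L^p_{x,s}(\mathbb{R})}<\infty$ for both $p\in\{p_0,p_1\}$; hence the tail on the right-hand side can be made $\le(\varepsilon/2)^p$ by choosing $T$ sufficiently large, uniformly in $T_1\ge T$.

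On $J_2$ the naive Strichartz estimate Theorem \ref{Stric} only yields an $O(1)$ bound per dyadic frequency, so smallness must come from a new source; this is the main obstacle. Here I would invoke Lemma \ref{BouLem}: writing $e^{itL}f_N(P)=\sum_q e^{itq^2}[\pi_q f_N](P)$ by means of the explicit zonal formula \eqref{DefSpecProj} and exploiting Weyl-type cancellation in the phases $e^{itq^2}$ (valid once $|t|$ exceeds the Euclidean window, or equivalently from the Euclidean-Fourier analysis of the associated oscillatory integrals), this lemma upgrades the Sogge--Sobolev pointwise bound $\|P_N e^{itL}f_N\|_{L^\infty_{x,t}(\mathbb{S}^3\times J_2)}\lesssim N^{1/2}$ by a factor $N^{-\delta}$ for some fixed $\delta>0$. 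Interpolating this improvement against the energy identity $\|P_N e^{itL}f_N\|_{L^2_{x,t}(J_2)}^2=|J_2|\,\|P_N f_N\|_{L^2}^2\lesssim T^{-1}N^{-2}$ produces a polynomial-in-$N$ gain in each dyadic contribution to $\|e^{itL}f_N\|_{Z(J_2)}$, and summing over the finitely many relevant dyadic scales yields a bound $\le\varepsilon/2$ for $N\ge N_0(\phi,\varepsilon,T)$. Unlike on $\mathbb{T}^d$, where Poisson summation on the frequency lattice directly provides the needed cancellation, on $\mathbb{S}^3$ the decisive ingredient is the zonal-function description of the eigenprojectors combined with Weyl sums adapted to the spectrum $\{q^2\}_q$; this is precisely why the $J_2$ analysis is the genuinely new contribution of the lemma.
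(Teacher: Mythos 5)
Your treatment of the inner slab $J_1$ (Euclidean approximation via Lemma \ref{step1} with $\rho=0$, then the global $L^p_{x,t}$ finiteness of the Euclidean linear flow to gain smallness by taking $T$ large) is sound in spirit, and is a reasonable alternative to the paper, which in fact never splits the time interval: after reducing via Strichartz and interpolation to the single bound $\sup_M M^{-1/2}\Vert P_M e^{itL}\varphi_N\Vert_{L^\infty_{x,t}}\lesssim\varepsilon$, the paper handles all of $(TN^{-2},T^{-1})$ at once with the Weyl sum. The genuine gap is in your outer slab $J_2$. Lemma \ref{BouLem} does \emph{not} upgrade the $L^\infty$ bound by a factor $N^{-\delta}$ uniformly in $t\in J_2$: its gain is $KQ^{3/2}N/\sqrt{q(1+N^2\vert\beta\vert)}$ in terms of the Dirichlet data of $t/\pi$, and for $t/\pi=a/q_0$ rational with $q_0\sim T$ (which is allowed, since $t\le T^{-1}$ only forces $q_0\gtrsim T$) there is no gain in $N$ at all -- the smallness is in $T$ (and in the near-origin regime, in the lower cutoff $T$ or your $T_1$), with the quantifiers the opposite of what you wrote ("$\le\varepsilon/2$ for $N\ge N_0(\phi,\varepsilon,T)$"). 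This matters quantitatively: interpolating your claimed $L^\infty$ gain against the crude bound $\Vert P_Ne^{itL}f_N\Vert_{L^2_{x,t}(J_2)}\lesssim \vert J_2\vert^{1/2}\Vert P_Nf_N\Vert_{L^2}$ gives a dyadic block of size about $T^{-1/p}N^{2/p-\delta(1-2/p)}$, so at $p_0=4+1/10$ you would need $\delta>0.95$ to avoid growth in $N$; since no $N$-power gain is available, this route does not close. The correct mechanism (the paper's) is to make the $L^\infty$ bound itself of size $\varepsilon N^{1/2}$ by exploiting that $TN^{-2}\le t\le T^{-1}$ forces either $(a,q)=(0,1)$ with $N^2\vert\beta\vert\gtrsim T$ or $q\gtrsim T$, and then to interpolate against the $L^{4+}$ Strichartz bound (not $L^2_{x,t}$), which loses no powers of $N$.

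A second omission: you never verify the hypotheses of Lemma \ref{BouLem}, namely the bounds $\vert\delta^jc_p(x)\vert\lesssim KN^{-j}$, $j\le 2$, with $K\sim Q^{10}N^{-1/2}$ for $c_p(x)=[\pi_p\varphi_N](x)$. These second-difference estimates come from the explicit zonal kernels and the spatial concentration of $\varphi_N$, and they are only useful for $x$ within distance $O(Q^6/N)$ of the concentration point; the complementary region must be treated separately (in the paper, by showing each $\pi_p\varphi_N(x)$ is already tiny there, so the Sobolev embedding is far from saturated without any time oscillation). Your proposal addresses neither the difference bounds nor this spatial dichotomy. A minor further point: you cannot simultaneously demand compact spatial support and Fourier support in an annulus; either approximation works, but the resulting only-approximate frequency localization of $f_N$ is what forces the Bernstein-type disposal of output frequencies $M\notin(Q^{-1}N,QN)$, as in \eqref{OffScale}, which your sketch glosses over.
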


\begin{remark}
Note that the analysis in \cite{BuGeTz} already gives the result on an interval of time of the form $[TN^{-2},N^{-1}]$. However for our application, it is important to obtain an upper bound independent of $N$.
\end{remark}

\begin{proof}

Using Strichartz estimates and interpolation, we see that it suffices to obtain this for $p=\infty$ in the definition of $Z$, i.e.
\begin{equation*}
\sup_{M}M^{-\frac{1}{2}}\Vert P_Me^{itL}f_N\Vert_{L^\infty_{x,t}(\mathbb{S}^3\times[TN^{-2},T^{-1}])}\lesssim\varepsilon.
\end{equation*}
Fix $\varphi\in C^\infty_c(\mathbb{R}^3)$ such that
\begin{equation*}
\Vert \phi-\varphi\Vert_{\dot{H}^1(\mathbb{R}^3)}\le \varepsilon^2.
\end{equation*}
From the boundedness of $T_N$ in \eqref{rescaled}, we deduce that it suffices to prove that
\begin{equation*}
\sup_MM^{-1/2}\Vert P_Me^{itL}\varphi_N\Vert_{L^\infty_{x,t}}\le \varepsilon,\quad\varphi_N=T_N\varphi.
\end{equation*}
Let $Q=R^2+\varepsilon^{-2}$, where $R$ is the diameter of the support of $\varphi$. Using Bernstein estimate, we observe that
\begin{equation}\label{OffScale}
\begin{split}
M^{-\frac{1}{2}}\Vert P_Me^{itL}\varphi_N\Vert_{L^\infty_{x,t}}&\lesssim M\Vert P_Me^{itL}\varphi_N\Vert_{L^\infty_tL^2_x}\lesssim \min\left(\frac{M}{N},\left(\frac{N}{M}\right)^{10}\right).
\end{split}
\end{equation}
Thus, if $(M/N)\notin(Q^{-1},Q)$, \eqref{Lem4.4Stat} holds. From now on, we assume that
\begin{equation*}
Q^{-1}\le M/N\le Q.
\end{equation*}

We define
\begin{equation*}
c_p(x)=\left[\pi_p\varphi_N\right](x).
\end{equation*}
This decouples the oscillations in time and the variations in space as follows:
\begin{equation}\label{Explicit1111}
P_Me^{itL}\varphi_N(x)=\sum_{p\le 2M}\eta_M(p)e^{it p^2}c_p(x).
\end{equation}

We consider two cases.

\medskip

{\bf Case 1:} when $d_g(O,x)\ge Q^6/N$. In this case, we can use the explicit formula \eqref{Zonal} to get that the function is far from saturating Sobolev inequality
\begin{equation}\label{SmallSobEmb}
\sum_{M\le p\le 2M}\vert\pi_p(\varphi_N)(x)\vert\lesssim\varepsilon N^\frac{1}{2}.
\end{equation}

\medskip

From the formula \eqref{DefSpecProj} and the fact that in our case, for any $Y$ in the support of $\varphi_N$, $\angle(Y,x)\ge Q^5/N$  we obtain that
\begin{equation*}
\begin{split}
\vert  \pi_p(\varphi_N)(x)\vert&\lesssim \Vert \varphi_N\Vert_{L^1}p(N/Q^{5})\lesssim \varepsilon^{2}Q^{-4} N^{-\frac{3}{2}}p.
\end{split}
\end{equation*}
Summing crudely over all $p\le 2M$, we obtain that
\begin{equation*}
\vert \sum_{p\le 2M}\eta_M(p)e^{-it p^2}c_p(x)\vert\lesssim N^{-\frac{3}{2}}Q^{-4}\sum_{p\le 2M}\varepsilon^{2}p\le\varepsilon N^\frac{1}{2}.
\end{equation*}
which gives \eqref{Lem4.4Stat} in this case.

\medskip

{\bf Case 2:} when $d_g(O, x)\le 2Q^6/N$. In this case, we claim that, uniformly in $p$, $d_g(O, x)$, there holds that
\begin{equation}\label{EstimFourierCoeff}
\begin{split}
\vert c_p(x)\vert&\lesssim_\varphi Q^{10}N^{-\frac{1}{2}},\\
\vert c_p(x)-c_{p-1}(x)\vert&\lesssim_\varphi Q^{10}N^{-\frac{3}{2}},\\
\vert c_p(x)-2c_{p-1}(x)+c_{p-2}(x)\vert&\lesssim_\varphi Q^{10}N^{-\frac{5}{2}}.
\end{split}
\end{equation}
This follows from the explicit formulas
\begin{equation*}
\begin{split}
c_p( Q)&=\int_{\mathbb{S}^3}{\bf Z}_p( R_QP)\varphi_N( P)d\nu_g( P),\\
c_p(Q)-c_{p-1}(Q)&=\int_{\mathbb{S}^3}{\bf Z}_p^d( R_QP)\varphi_N( P)d\nu_g( P),\\
c_p(Q)-2c_{p-1}(Q)+c_{p-2}(Q)&=\int_{\mathbb{S}^3}{\bf Z}_p^{dd}( R_QP)\varphi_N( P)d\nu_g( P),
\end{split}
\end{equation*}
where
\begin{equation*}
\begin{split}
\vert Z_p(\theta)\vert=&p\frac{\vert \sin(p\theta)\vert}{\sin\theta}\lesssim p^2\\
\vert Z_p^d(\theta)\vert =&p\left\vert \sin(p\theta)\frac{1-\cos\theta}{\sin\theta}+\cos(p\theta)+\frac{\sin((p-1)\theta)}{p\sin\theta}\right\vert\lesssim p(1+p\theta)\\
\left\vert Z_p^{dd}(\theta)\right\vert =&(p-1)\Big\vert \frac{\sin(p\theta)}{\sin\theta}\left[1-2\cos\theta+\cos(2\theta)\right]\\
&+\cos(p\theta)\left[2-\frac{\sin(2\theta)}{\sin\theta}\right]+\frac{2}{p-1}\frac{\cos\theta-\cos(2\theta)}{\sin\theta}+\frac{\sin p\theta}{p\sin\theta}\left[1-\cos(2\theta)\right]\Big\vert\\
\lesssim& p^2\theta^2+\theta.
\end{split}
\end{equation*}

We may now use \eqref{Explicit1111}, \eqref{EstimFourierCoeff} together with Lemma \ref{BouLem} (with $K=Q^{10}N^{-\frac{1}{2}}$) to find an acceptable $T$ as in \eqref{Lem4.4Stat}. More precisely, we fix $T_0\ge\varepsilon^{-3}$, which forces either $(a,q)=(0,1)$ or $q\ge \varepsilon^{-2}$ and then find choose $T\ge T_0$ in such a way as to satisfy \eqref{Lem4.4Stat}.
\end{proof}

In the process, we have seen from \eqref{OffScale}, \eqref{SmallSobEmb} and the end of the proof above that if $\varphi\in C^\infty_c(\mathbb{R}^3)$, then, for any $\varepsilon$, there exists $T_0>0$ and $N_0$ such that, whenever $T\ge T_0$ and $N\ge N_0$, there holds that 
\begin{equation}\label{ExplicitDisp}
\sum_{M\ge 1}M^{-1/2}\Vert e^{itL}P_M(T_N\varphi)\Vert_{L^\infty(\mathbb{S}^3\times (TN^{-2},T^{-1}))}\lesssim\varepsilon.
\end{equation}

We conclude this section with a proposition describing nonlinear solutions of the initial-value problem \eqref{NLSBis} corresponding to data concentrating at a point. In view of the profile analysis in the next section, we need to consider slightly more general data. Given $f\in L^2(\mathbb{S}^3)$, $t_0\in\mathbb{R}$ and $x_0\in\mathbb{S}^3$ we define
\begin{equation*}
\begin{split}
&(\Pi_{t_0,x_0})f(x)=(e^{-it_0L}\tau_{x_0}f)(x),
\end{split}
\end{equation*}
where $\tau_{x_0}f(x)=f(R_{x_0}x)$.

Let $\widetilde{\mathcal{F}}_e$ denote the set of renormalized Euclidean frames
\footnote{We will later consider a slightly more general class of frames, called {\it{Euclidean frames}}, see Definition \ref{DefPro}. 
For our later application, it suffices to prove Proposition \ref{GEForEP} under the stronger assumption that $\mathcal{O}$ is a renormalized Euclidean frame.} 
\begin{equation}\label{renframe}
\begin{split}
\widetilde{\mathcal{F}}_e:=\{(N_k,t_k,x_k)_{k\geq 1}:&\,N_k\in[1,+\infty),\,t_k\to 0,\,x_k\in\mathbb{S}^3,\,N_k\to+\infty,\\
&\text{ and either }t_k=0 \text{ for any }k\geq 1\text{ or }\lim_{k\to\infty}N_k^2|t_k|=+\infty\}.
\end{split}
\end{equation}

\begin{proposition}\label{GEForEP}
Assume that $\mathcal{O}=(N_k,t_k,x_k)_k\in\widetilde{\mathcal{F}}_e$, $\phi\in\dot{H}^1(\mathbb{S}^3)$, and let $U_k(0)=\Pi_{t_k,x_k}(T_{N_k}\phi)$.

(i) There exists $\tau=\tau(\phi)$ such that for $k$ large enough (depending only on $\phi$ and $\mathcal{O}$) 
there is a nonlinear solution $U_k\in X^1(-\tau,\tau)$ of the equation \eqref{NLSBis} with initial data $U_k(0)$, and
\begin{equation}\label{ControlOnZNormForEP}
\Vert U_k\Vert_{X^1(-\tau,\tau)}\lesssim_{E_{\mathbb{R}^3}(\phi)}1.
\end{equation}

(ii) There exists a Euclidean solution $u\in C(\mathbb{R}:\dot{H}^1(\mathbb{R}^3))$ of
\begin{equation}\label{EEq}
\left(i\partial_t-\Delta_{\mathbb{R}^3}+1\right)u+u\vert u\vert^4=0
\end{equation}
with scattering data $\phi^{\pm\infty}$ defined as in \eqref{EScat} such that the following holds, up to a subsequence:
for any $\varepsilon>0$, there exists $T(\phi,\varepsilon)$ such that for all $T\ge T(\phi,\varepsilon)$ there exists $R(\phi,\varepsilon,T)$ such that for all $R\ge R(\phi,\varepsilon,T)$, there holds that
\begin{equation}\label{ProxyEuclHyp}
\Vert U_k-\tilde{u}_k\Vert_{X^1(\{\vert t-t_k\vert\le TN_k^{-2}\}\cap\{\vert t\vert\le T^{-1}\})}\le\varepsilon,
\end{equation}
for $k$ large enough, where
\begin{equation*}
\tilde{u}_k(x,t)=N_k^\frac{1}{2}\eta(N_kd_g(x_k,x)/R)u(N_k\exp_{x_k}^{-1}(x),N_k^2(t-t_k)).
\end{equation*}
In addition, up to a subsequence\footnote{The definition of $T_N$ is given in \eqref{rescaled}.},
\begin{equation}\label{ScatEuclSol}
\Vert U_k(t)-\Pi_{t_k-t,x_k}T_{N_k}\phi^{\pm\infty}\Vert_{X^1(\{\pm(t-t_k)\geq TN_k^{-2}\}\cap \{\vert t \vert\le T^{-1}\})}\le \varepsilon,
\end{equation}
for $k$ large enough (depending on $\phi,\varepsilon,T,R$).
\end{proposition}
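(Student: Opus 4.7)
The plan is to cover the interval $|t|\le T^{-1}$ by an Euclidean window $|t-t_k|\le TN_k^{-2}$ on which the nonlinear flow is well approximated by a rescaled Euclidean solution (via Lemma \ref{step1}), flanked on either side by ``scattering windows'' where the Euclidean solution is close to a linear wave whose spherical counterpart is small in $Z$ (via Lemma \ref{Extinction}), and then to glue everything together using Proposition \ref{Stabprop}. As a preliminary reduction, I would use the $\tau_{x_k}$-symmetry and time translation to rewrite $\widetilde{U}_k(s):=\tau_{x_k}^{-1}U_k(s+t_k)$, which solves \eqref{NLSBis} with initial data $\widetilde{U}_k(0)=T_{N_k}\phi$; all conclusions for $U_k$ follow from the corresponding statements for $\widetilde U_k$ shifted back. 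The case $t_k=0$ and the case $N_k^2|t_k|\to\infty$ then differ only in that the ``Euclidean window'' for the latter is off-center, but the analysis is identical once we note that for $|t|\le T^{-1}$ we have $|t_k|\to 0$ so that all windows fit inside $(-2T^{-1},2T^{-1})$.

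For the Euclidean window, let $u\in C(\mathbb{R}:\dot H^1(\mathbb{R}^3))$ solve \eqref{EEq} with $u(0)=\phi$ and admit scattering data $\phi^{\pm\infty}$ (Theorem \ref{MainThmEucl}). Given $\varepsilon>0$, I pick $\phi'\in H^5(\mathbb{R}^3)$ with $\|\phi-\phi'\|_{\dot H^1}\le\varepsilon^2$ and apply Lemma \ref{step1} to the corresponding smooth Euclidean solution $v'$ on the Euclidean time interval $(-T,T)$. For $R$ large enough and $k$ large this produces a nonlinear solution $V_{R,N_k}\in X^1(-TN_k^{-2},TN_k^{-2})$ with
\begin{equation*}
\|\widetilde U_k-V_{R,N_k}\|_{X^1(-TN_k^{-2},TN_k^{-2})}\lesssim_{E_{\mathbb{R}^3}(\phi)}\varepsilon,
\end{equation*}
which already gives part (i) for $\tau=TN_k^{-2}$ together with the central piece of \eqref{ProxyEuclHyp}.

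For the future scattering window $s\in[TN_k^{-2},T^{-1}]$, the scattering relation $\|u(\sigma)-e^{-i\sigma(1-\Delta_{\mathbb R^3})}\phi^{+\infty}\|_{\dot H^1}\to 0$ applied at $\sigma=T$, combined with the fact that $T_{N_k}$ approximately intertwines the Euclidean and spherical linear flows at Euclidean time $T$ (i.e.\ at spherical time $TN_k^{-2}$), allows me to show that
\begin{equation*}
\|\widetilde U_k(TN_k^{-2})-e^{i(TN_k^{-2})L}T_{N_k}\phi^{+\infty}\|_{H^1(\mathbb S^3)}\to 0
\end{equation*}
along a subsequence, as first $T\to\infty$ and then $k\to\infty$. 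At this point Lemma \ref{Extinction} applied to $\phi^{+\infty}$ gives $\|e^{isL}T_{N_k}\phi^{+\infty}\|_{Z(TN_k^{-2},T^{-1})}\le\varepsilon$, which by Proposition \ref{LWP}(i) yields a strong solution $W_k^+$ on $[TN_k^{-2},T^{-1}]$ with $W_k^+(TN_k^{-2})=\widetilde U_k(TN_k^{-2})$ satisfying $\|W_k^+-e^{i(s-TN_k^{-2})L}W_k^+(TN_k^{-2})\|_{X^1}\lesssim\varepsilon^{3/2}$; this establishes the future half of \eqref{ScatEuclSol}. The past window is treated symmetrically with $\phi^{-\infty}$.

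Finally, I concatenate $W_k^-$, $V_{R,N_k}$ and $W_k^+$ into a global approximate solution $\widetilde V_k\in X^1(-T^{-1},T^{-1})$ of \eqref{NLSBis}, whose error $e$ in \eqref{ANLS} is supported near the two matching points $\pm TN_k^{-2}$ and bounded in $N(I)$ by $O(\varepsilon)$ thanks to the matching identities we built in. Proposition \ref{Stabprop} then produces the true solution $\widetilde U_k$ on the whole interval $(-T^{-1},T^{-1})$ with $\|\widetilde U_k-\widetilde V_k\|_{X^1}\lesssim\varepsilon$, yielding at once \eqref{ControlOnZNormForEP}, \eqref{ProxyEuclHyp} and \eqref{ScatEuclSol} after translating by $t_k$ and rotating by $\tau_{x_k}$. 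The main obstacle will be the matching step: I need a quantitative comparison between the spherical linear flow acting on a rescaled profile and the rescaling of the Euclidean linear flow on that profile, of the form $\|e^{itL}T_N\psi-T_N[e^{it(1-\Delta_{\mathbb R^3})}\psi]\|_{H^1}\to 0$ for $|t|\le T^{-1}$ at the endpoint $t=\pm TN_k^{-2}$. This is an analogue of the nonlinear comparison built into Lemma \ref{step1} but at a different time scale, and will be justified by the same cutoff/error-function argument based on the expression \eqref{Delta} for $\Delta_{\mathbb{S}^3}$; passing to subsequences, as the statement of (ii) allows, then lets me absorb sub-principal errors at each stage.
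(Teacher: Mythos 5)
Your proposal is correct and follows essentially the same route as the paper, which obtains the proposition as a minor adaptation of the argument for \cite[Proposition 4.4]{IoPa2}: the rescaled Euclidean approximation of Lemma \ref{step1} inside the window $\vert t-t_k\vert\le TN_k^{-2}$, Euclidean scattering plus the extinction Lemma \ref{Extinction} and the small-$Z$ local theory (Proposition \ref{LWP}) outside, all glued via Proposition \ref{Stabprop}. The ``matching step'' you flag as the main obstacle is not a genuine gap: the comparison $e^{itL}T_{N_k}\psi\approx$ rescaled truncated Euclidean linear flow at the edge of the window is exactly Lemma \ref{step1} applied with $\rho=0$, which is how the paper (following \cite{IoPa2,IoPaSt}) handles linear evolutions inside the Euclidean window.
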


\begin{proof}[Proof of Proposition \ref{GEForEP}]

This is follows from minor adaptation of the proof in \cite[Proposition 4.4]{IoPa2}. Here Lemma \ref{Extinction} is used in an essential way.
\end{proof}

\subsection{Profile decomposition}

In this section we show that given a bounded sequence of functions $f_k\in H^1(\mathbb{S}^3)$ we can construct suitable {\it{profiles}} and express the sequence in terms of these profiles. The statements and the arguments in this section are very similar to those in \cite[Section 5]{IoPa2}. See also \cite{IoPa,IoPaSt} and \cite{Ker} for the original proofs of Keraani in the Euclidean geometry and \cite{BaGe,MeVe} for earlier results.

The following is our main definition.

\begin{definition}\label{DefPro}

\begin{enumerate}

\item We define a Euclidean frame to be a sequence $\mathcal{F}_e=(N_k,t_k,x_k)_k$ with $N_k\ge 1$, $N_k\to+\infty$, $t_k\in\mathbb{R}$, $t_k\to 0$, $x_k\in\mathbb{S}^3$. We say that two frames $(N_k,t_k,x_k)_k$ and $(M_k,s_k,y_k)_k$ are orthogonal if
\begin{equation*}
\lim_{k\to+\infty} \left(\left\vert \ln\frac{N_k}{M_k}\right\vert+N_k^2\vert t_k-s_k\vert+N_kd_g(x_k,y_k)\right)=+\infty.\end{equation*}
Two frames that are not orthogonal are called equivalent.

\item If $\mathcal{O}=(N_k,t_k,x_k)_k$ is a Euclidean frame and if $\phi\in \dot{H}^1(\mathbb{R}^3)$, we define the Euclidean profile associated to $(\phi,\mathcal{O})$ as the sequence $\widetilde{\phi}_{\mathcal{O}_k}$
\begin{equation*}
\widetilde{\phi}_{\mathcal{O}_k}(x):=\Pi_{t_k,x_k}(T_{N_k}\phi).
\end{equation*}
\end{enumerate}
\end{definition}

The following lemma summarizes some of the basic properties of profiles associated to equivalent/orthogonal frames. Its proof uses Lemma \ref{step1} with $\rho=0$ to control linear evolutions inside the Euclidean window and Lemma \ref{Extinction} to control these evolutions outside such a window. Given these ingredients, the proof of Lemma \ref{EquivFrames} is very similar to the proof of Lemma 5.4 in \cite{IoPaSt}, and is omitted.

\begin{lemma}(Equivalence of frames)\label{EquivFrames}

(i) If $\mathcal{O}$ and $\mathcal{O}^\prime$ are equivalent Euclidean profiles, then there exists an isometry $T:\dot{H}^1(\mathbb{R}^3)\to\dot{H}^1(\mathbb{R}^3)$ such that for any profile $\widetilde{\psi}_{\mathcal{O}^\prime_k}$, up to a subsequence there holds that
\begin{equation}\label{equiv}
\limsup_{k\to+\infty}
\Vert \widetilde{T\psi}_{\mathcal{O}_k}-\widetilde{\psi}_{\mathcal{O}^\prime_k}\Vert_{H^1(\mathbb{S}^3)}=0.
\end{equation}

(ii) If $\mathcal{O}$ and $\mathcal{O}^\prime$ are orthogonal frames and $\widetilde{\psi}_{\mathcal{O}_k}$, $\widetilde{\varphi}_{\mathcal{O}^\prime_k}$ are corresponding profiles, then, up to a subsequence,
\begin{equation*}
\begin{split}
\lim_{k\to+\infty}\langle \widetilde{\psi}_{\mathcal{O}_k},\widetilde{\varphi}_{\mathcal{O}^\prime_k}\rangle_{H^1\times H^1(\mathbb{S}^3)}&=0,\\
\lim_{k\to+\infty}\langle |\widetilde{\psi}_{\mathcal{O}_k}|^3,|\widetilde{\varphi}_{\mathcal{O}^\prime_k}|^3\rangle_{L^2\times L^2(\mathbb{S}^3)}&=0.
\end{split}
\end{equation*}

(iii) If $\mathcal{O}$ is a Euclidean frame and $\widetilde{\psi}_{\mathcal{O}_k}$, $\widetilde{\varphi}_{\mathcal{O}_k}$ are two profiles corresponding to $\mathcal{O}$, then
\begin{equation*}
\begin{split}
&\lim_{k\to+\infty}\left(\Vert\widetilde{\psi}_{\mathcal{O}_k}\Vert_{L^2(\mathbb{S}^3)}+\Vert\widetilde{\varphi}_{\mathcal{O}_k}\Vert_{L^2(\mathbb{S}^3)}\right)=0,\\
&\lim_{k\to+\infty}\langle \widetilde{\psi}_{\mathcal{O}_k},\widetilde{\varphi}_{\mathcal{O}_k}\rangle_{H^1\times H^1(\mathbb{S}^3)}=\langle \psi,\varphi\rangle_{\dot{H}^1\times\dot{H}^1(\mathbb{R}^3)}.
\end{split}
\end{equation*}
\end{lemma}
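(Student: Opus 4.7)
The plan is to address the three parts separately, with (iii) providing the computational foundation, (i) being an explicit change of variables once the subsequential parameters are extracted, and (ii) following from a case analysis on which quantity in Definition \ref{DefPro} diverges.

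For part (iii), the map $\Pi_{t_k,x_k}$ is both an $L^2$ and an $H^1$-isometry, since $e^{-itL}$ is unitary on $L^2$ and on $H^1$ (as $L$ is self-adjoint and commutes with itself) and $\tau_{x_k}$ is a rotation. Thus the $L^2$ smallness in (iii) is immediate from the bound $\|T_N\phi\|_{L^2} \lesssim N^{-1}\|\phi\|_{L^2}$ noted after \eqref{rescaled}. For the $H^1$ pairing, I would split $\langle\cdot,\cdot\rangle_{H^1} = \langle\cdot,\cdot\rangle_{L^2} + \langle\sqrt{L}\,\cdot,\sqrt{L}\,\cdot\rangle_{L^2}$; the first contribution vanishes by Cauchy--Schwarz. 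For the second, $\Pi_{t_k,x_k}$ commutes with $\sqrt{L}$, so the task reduces to showing $\langle\sqrt{L}\,T_{N_k}\psi, \sqrt{L}\,T_{N_k}\varphi\rangle_{L^2(\mathbb{S}^3)} \to \langle\nabla\psi,\nabla\varphi\rangle_{L^2(\mathbb{R}^3)}$. Using \eqref{Delta} in normal coordinates around $O$, the metric agrees with the Euclidean one up to $O(d_g(O,\cdot)^2)$, so after first approximating $\psi,\varphi$ by $C^\infty_c$ functions (which trivializes the cutoff $\eta(\sqrt{N}\cdot)$), the desired convergence is a routine change of variables.

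For part (i), after extracting a subsequence I may assume $N_k/M_k \to \alpha \in (0,\infty)$, $N_k^2(t_k - s_k) \to \tau \in \mathbb{R}$, and $N_k \exp_{y_k}^{-1}(x_k) \to \xi \in \mathbb{R}^3$. I would define $T$ on $\dot{H}^1(\mathbb{R}^3)$ as the composition of the translation by $\xi$, dilation by $\alpha$ (i.e., $\phi\mapsto \alpha^{1/2}\phi(\alpha\cdot)$), and the free evolution $e^{i\alpha^{-2}\tau\Delta_{\mathbb{R}^3}}$, each of which is a $\dot{H}^1$-isometry. To verify \eqref{equiv}, I would perform a change of variables in a chart around $y_k$: in those coordinates $R_{x_k}^{-1}R_{y_k}$ becomes, to leading order, translation by $\exp_{y_k}^{-1}(x_k)$; the rescaled operator $e^{-i(t_k - s_k)L}$ converges to $e^{i\alpha^{-2}\tau\Delta_{\mathbb{R}^3}}$ up to a harmless phase, as in the proof of Lemma \ref{step1}; and the scale mismatch becomes the dilation by $\alpha$.

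For part (ii), a density argument allows one to take $\psi,\varphi \in C^\infty_c(\mathbb{R}^3)$. Orthogonality of the frames forces one of the three quantities of Definition \ref{DefPro} to diverge along the subsequence. In the scale-separated case $N_k/M_k \to \infty$, a direct pointwise bound on $T_N\phi$ handles both inner products (with the $\dot{H}^1$ pairing split dyadically and estimated by Bernstein). In the spatially-separated case $N_k d_g(x_k,y_k) \to \infty$, the supports of the cutoffs $\eta(\sqrt{N_k}d_g(x_k,\cdot))$ and $\eta(\sqrt{M_k}d_g(y_k,\cdot))$ become disjoint, modulo errors controlled by Lemma \ref{step1} for the linear evolution. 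The principal obstacle is the pure time-separated case $N_k^2|t_k - s_k| \to \infty$ with the other two quantities bounded, since both profiles then occupy the same spatial patch at the same scale. Here I would apply Lemma \ref{step1} to compare the evolution of one profile with a rescaled Euclidean free evolution, then invoke the dispersive estimate \eqref{ExplicitDisp} (equivalently, the argument of Lemma \ref{Extinction}) to obtain $L^\infty$ decay on the support of the other profile. This time-separation reduction is the technical heart of the lemma and follows the structure of \cite[Lemma 5.4]{IoPaSt}.
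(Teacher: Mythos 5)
Your proposal is correct and follows essentially the same route the paper intends: the paper omits the proof, stating that it combines Lemma \ref{step1} with $\rho=0$ (to control linear evolutions inside the Euclidean window) and Lemma \ref{Extinction} (outside that window) following the structure of \cite[Lemma 5.4]{IoPaSt}, which is exactly the decomposition you use, including the explicit $\dot H^1$-isometry built from translation, dilation and the free flow in part (i) and the reduction of the time-separated case in part (ii) to the extinction/dispersive bound \eqref{ExplicitDisp}.
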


\begin{definition}\label{absent}
We say that a sequence of functions $\{f_k\}_k\subseteq H^1(\mathbb{S}^3)$ is absent from a frame $\mathcal{O}$ if for every profile $\psi_{\mathcal{O}_k}$ associated to $\mathcal{O}$,
\begin{equation*}
\int_{\mathbb{S}^3}\left(f_k\overline{\widetilde{\psi}}_{\mathcal{O}_k}+\nabla f_k\nabla\overline{\widetilde{\psi}}_{\mathcal{O}_k}\right)dx\to0
\end{equation*}
as $k\to+\infty$.
\end{definition}

Note in particular that a profile associated to a frame $\mathcal{O}$ is absent from any frame orthogonal to $\mathcal{O}$.

The following proposition is the core of this section. Its proof is similar to the proof of \cite[Proposition 5.5]{IoPa}, and is omitted.

\begin{proposition}\label{PD}
Consider $\{f_k\}_k$ a sequence of functions in $H^1(\mathbb{S}^3)$ satisfying
\begin{equation}\label{FkBoundedPD}
\limsup_{k\to+\infty}\Vert f_k\Vert_{H^1(\mathbb{S}^3)}\lesssim E
\end{equation}
and a sequence of intervals $I_k=(-T_k,T^k)$ such that $\vert I_k\vert\to0$ as $k\to+\infty$.
Up to passing to a subsequence, assume that $f_k\rightharpoonup g\in H^1(\mathbb{S}^3)$.
There exists a sequence of profiles $\widetilde{\psi}^\alpha_{\mathcal{O}^\alpha_k}$ associated to pairwise orthogonal Euclidean frames $\mathcal{O}^\alpha$ such that, after extracting a subsequence, for every $J\ge 0$
\begin{equation}\label{DecompositionPD}
f_k=g+\sum_{1\le \alpha\le J}\widetilde{\psi}^\alpha_{\mathcal{O}^\alpha_k}+R_k^J
\end{equation}
where $R_k^J$ is absent from the frames $\mathcal{O}^\alpha$, $\alpha\le J$ and is small in the sense that
\begin{equation}\label{smallnessPD}
\limsup_{J\to+\infty}\limsup_{k\to+\infty}\big[\sup_{N\ge 1,t\in I_k,\,x\in\mathbb{S}^3}N^{-\frac{1}{2}}\left\vert \left(e^{itL}P_NR_k^J\right)(x)\right\vert\big]=0.
\end{equation}
Besides, we also have the following orthogonality relations
\begin{equation}\label{OrthogonalityPD}
\begin{split}
&\Vert f_k\Vert_{L^2}^2=\Vert g\Vert_{L^2}^2+\Vert R_k^J\Vert_{L^2}^2+o_k(1),\\
&\Vert \nabla f_k\Vert_{L^2}^2=\Vert \nabla g\Vert_{L^2}^2+\sum_{\alpha\le J}\Vert\nabla_{\mathbb{R}^3}\psi^\alpha\Vert_{L^2(\mathbb{R}^3)}^2+\Vert\nabla R_k^J\Vert_{L^2}^2+o_k(1),\\
&\lim_{J\to+\infty}\limsup_{k\to+\infty}\left\vert\Vert f_k\Vert_{L^6}^6-\Vert g\Vert_{L^6}^6-\sum_{\alpha\le J}\Vert\widetilde{\varphi}^\alpha_{\mathcal{O}^\alpha_k}\Vert_{L^6}^6\right\vert=0,
\end{split}
\end{equation}
where $o_k(1)\to0$ as $k\to+\infty$, possibly depending on $J$.
\end{proposition}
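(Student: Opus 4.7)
\textbf{Proof plan for Proposition \ref{PD}.} My plan is to follow the standard iterative bubble-extraction approach of Bahouri--G\'erard and Keraani \cite{Ker}, adapted to the sphere as in \cite[Proposition 5.5]{IoPa}. First, by passing to a subsequence I extract the $H^1$-weak limit $g$ and reduce to the case $f_k\rightharpoonup 0$. It is convenient to denote the control functional by
\begin{equation*}
\Lambda(\{h_k\}):=\limsup_{k\to+\infty}\sup_{N\ge 1,\,t\in I_k,\,x\in\mathbb{S}^3}N^{-1/2}\bigl|(e^{itL}P_Nh_k)(x)\bigr|,
\end{equation*}
so that \eqref{smallnessPD} reads $\lim_{J\to\infty}\Lambda(\{R_k^J\})=0$. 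I extract profiles inductively: setting $R_k^0:=f_k-g$, as long as $\Lambda(\{R_k^{J-1}\})>0$ I apply the bubble-extraction lemma below to produce a new profile $\widetilde{\psi}^J_{\mathcal{O}^J_k}$, automatically orthogonal to the previously-extracted frames, and set $R_k^J:=R_k^{J-1}-\widetilde{\psi}^J_{\mathcal{O}^J_k}$.

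The core ingredient is a \emph{bubble-extraction lemma}: if $\{h_k\}\subset H^1(\mathbb{S}^3)$ is bounded, $h_k\rightharpoonup 0$, and $\Lambda(\{h_k\})\ge\delta>0$, then after passing to a subsequence there exist a Euclidean frame $\mathcal{O}=(N_k,t_k,x_k)$ and $\phi\in\dot{H}^1(\mathbb{R}^3)$ with $\|\nabla_{\mathbb{R}^3}\phi\|_{L^2}\gtrsim\delta$ such that the sequence $h_k$, pulled back to $\mathbb{R}^3$ via $\exp_{x_k}$ and rescaled inversely to $T_{N_k}\Pi_{t_k,x_k}$, converges weakly to $\phi$ in $\dot{H}^1(\mathbb{R}^3)$. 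To prove this, I pick $(N_k,t_k,x_k)$ almost attaining the supremum defining $\Lambda(\{h_k\})$, so that $N_k^{-1/2}|(e^{it_kL}P_{N_k}h_k)(x_k)|\ge\delta/2$; since $t_k\in I_k$ and $|I_k|\to 0$, automatically $t_k\to 0$. Compactness of $P_{\le N_0}H^1(\mathbb{S}^3)\hookrightarrow L^\infty(\mathbb{S}^3)$ combined with $h_k\rightharpoonup 0$ forces $\|P_{\le N_0}h_k\|_{L^\infty}\to 0$ for each fixed $N_0$, so $N_k\to+\infty$. Writing $(e^{it_kL}P_{N_k}h_k)(x_k)$ as the $L^2$-pairing of $h_k$ against the kernel $K_k:=e^{-it_kL}P_{N_k}\delta_{x_k}$, and analyzing this kernel on scale $N_k^{-1}$ (either via the parametrix in Lemma \ref{LemBuGeTz}, or via a direct Euclidean-limit computation in the spirit of the proof of Lemma \ref{step1}), one sees that the rescaling $T_{N_k}^{-1}\tau_{x_k}^{-1}K_k$, transported to $\mathbb{R}^3$, converges to a nonzero element of $\dot H^{-1}(\mathbb{R}^3)$. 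Weak-$\dot H^1$ compactness and the pairing lower bound then produce $\phi$ with the stated norm bound.

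Given the bubble-extraction lemma, iteration yields \eqref{DecompositionPD}. The orthogonality of the new frame $\mathcal{O}^J$ to the earlier ones is automatic: if $\mathcal{O}^J$ were equivalent to some $\mathcal{O}^\alpha$, $\alpha<J$, then Lemma \ref{EquivFrames}(i) would identify $\widetilde{\psi}^J_{\mathcal{O}^J_k}$ with a profile of $\mathcal{O}^\alpha$, contradicting the fact that $R_k^{J-1}$ is absent from $\mathcal{O}^\alpha$. The $\dot H^1$-orthogonality in \eqref{OrthogonalityPD} follows from Lemma \ref{EquivFrames}(ii)--(iii) together with the weak convergence underlying the extraction; the $L^2$-orthogonality uses in addition the decay $\|\widetilde{\psi}^\alpha_{\mathcal{O}^\alpha_k}\|_{L^2(\mathbb{S}^3)}\lesssim N_k^{-1}\|\psi^\alpha\|_{L^2(\mathbb{R}^3)}\to 0$. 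In particular
\begin{equation*}
\sum_{\alpha\ge 1}\|\nabla_{\mathbb{R}^3}\psi^\alpha\|_{L^2(\mathbb{R}^3)}^2\le\limsup_{k\to+\infty}\|\nabla f_k\|_{L^2(\mathbb{S}^3)}^2\lesssim E^2,
\end{equation*}
which combined with the quantitative lower bound $\|\nabla_{\mathbb{R}^3}\psi^J\|_{L^2}\gtrsim\Lambda(\{R_k^{J-1}\})$ forces $\Lambda(\{R_k^J\})\to 0$ as $J\to\infty$, giving \eqref{smallnessPD}. The $L^6$-orthogonality rests on a refined Sobolev-type inequality $\|h\|_{L^6(\mathbb{S}^3)}\lesssim\|h\|_{H^1(\mathbb{S}^3)}^{2/3}\Lambda(\{h\})^{1/3}$, derived from a Littlewood--Paley decomposition together with Bernstein and Theorem \ref{Stric}; all cross-terms in the $L^6$ expansion vanish in the limit by Lemma \ref{EquivFrames}(ii). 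A Cantor diagonal extraction finally produces the single subsequence along which all assertions hold.

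The principal obstacle is the bubble-extraction lemma, specifically verifying that on $\mathbb{S}^3$---where the spectrum is highly degenerate and no genuine scaling symmetries exist---a pointwise concentration of $e^{it_kL}P_{N_k}h_k$ at scale $N_k^{-1}$ around $x_k$ produces an honest nonzero Euclidean profile $\phi\in\dot{H}^1(\mathbb{R}^3)$, together with a quantitative lower bound on its norm. This rests on the semiclassical description of the spectral projector $\pi_q$ near a point on the natural scale $q^{-1}$ provided by Lemma \ref{LemBuGeTz}, combined with the Euclidean-limit computation underlying Lemma \ref{step1}. Once this is in hand, the rest of the argument transcribes directly from the Euclidean and toroidal settings \cite{IoPa,IoPa2,Ker}.
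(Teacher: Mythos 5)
Your proposal is correct and takes essentially the same route as the paper: the paper omits the proof, referring to \cite[Proposition 5.5]{IoPa}, and your iterative bubble extraction driven by the functional $\Lambda$, the frame-equivalence/orthogonality facts of Lemma \ref{EquivFrames}, and a refined Sobolev inequality for the $L^6$ statement is exactly that argument. The only (harmless) deviation is your inequality $\Vert h\Vert_{L^6}\lesssim\Vert h\Vert_{H^1}^{2/3}\Lambda(\{h\})^{1/3}$, which is a weaker variant of the paper's \eqref{Sobo}, $\Vert f\Vert_{L^6(\mathbb{S}^3)}^6\lesssim\Vert f\Vert_{H^1(\mathbb{S}^3)}^2\big(\sup_{N\ge1}N^{-1/2}\Vert P_Nf\Vert_{L^\infty(\mathbb{S}^3)}\big)^4$, and still suffices since $\sup_N N^{-1/2}\Vert P_Nf\Vert_{L^\infty}\lesssim\Vert f\Vert_{H^1}$.
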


The proof of the last bound in \eqref{OrthogonalityPD} relies on the estimate
\begin{equation*}
 \limsup_{J\to+\infty}\limsup_{k\to+\infty}\|R_k^J\|_{L^6(\mathbb{S}^3)}=0.
\end{equation*}
This is a consequence of \eqref{smallnessPD} and the bound
\begin{equation}\label{Sobo}
 \|f\|^6_{L^6(\mathbb{S}^3)}\lesssim\|f\|_{H^1(\mathbb{S}^3)}^2\big(\sup_{N\geq 1}N^{-1/2}\|P_Nf\|_{L^\infty(\mathbb{S}^3)}\big)^4,
\end{equation}
for any $f\in H^1(\mathbb{S}^3)$, see for example \cite[Lemma 2.3]{IoPa} for a similar proof.


\section{Global Existence}\label{SecGWP}

\subsection{Induction on Energy}

We follow a strategy derived from \cite{KeMe}. From Proposition \ref{LWP}, we see that to prove Theorem \ref{MainThm}, it suffices to prove that solutions remain bounded in $Z$ on intervals of length at most $1$. To obtain this, we induct on the energy $E(u)$.

Define
\begin{equation*}
\Lambda_\ast(L)=\limsup_{\tau\to0}\sup\{\Vert u\Vert_{Z(I)}^2,E(u)\le L,\vert I\vert\le \tau\}
\end{equation*}
where the supremum is taken over all strong solutions of
\eqref{NLSBis} of energy less than or equal to $L$ and all intervals $I$ of length $\vert I\vert\le \tau$. In addition, define
\begin{equation}\label{Emax}
E_{max}=\sup\{L: \Lambda_\ast(L)<+\infty\}.
\end{equation}
We see that Theorem \ref{MainThm} is equivalent to the following statement.

\begin{theorem}\label{Alex40}
$E_{max}=+\infty$. In particular every solution of \eqref{NLSBis} is global.
\end{theorem}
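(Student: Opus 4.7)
The plan follows the classical induction-on-energy scheme of Kenig--Merle \cite{KeMe}, adapted to the compact setting as in \cite{IoPa2}. Assume for contradiction that $E_{\max}<+\infty$. From the definition \eqref{Emax}, extract a sequence of strong solutions $u_k$ of \eqref{NLSBis} on intervals $I_k\ni 0$ with $|I_k|\to 0$, $E(u_k)\to E_{\max}$, and $\|u_k\|_{Z(I_k)}\to+\infty$; conservation of mass and energy ensures $\{u_k(0)\}$ is bounded in $H^1(\mathbb{S}^3)$. Apply the profile decomposition of Proposition \ref{PD} to $f_k=u_k(0)$: up to a subsequence,
\[ f_k=g+\sum_{1\le\alpha\le J}\widetilde{\psi}^\alpha_{\mathcal{O}^\alpha_k}+R_k^J, \]
with pairwise orthogonal Euclidean frames $\mathcal{O}^\alpha$, the orthogonality identities \eqref{OrthogonalityPD}, and the smallness \eqref{smallnessPD}.

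Next, attach a nonlinear evolution to each piece. Proposition \ref{GEForEP} provides for each Euclidean profile a nonlinear profile $U_k^\alpha$ with $\|U_k^\alpha\|_{X^1(I_k)}\lesssim_{E_{\mathbb{R}^3}(\psi^\alpha)}1$ for $k$ large, built from the Euclidean global theory (Theorem \ref{MainThmEucl}) together with the extinction Lemma \ref{Extinction}. When $E(g)<E_{\max}$, the weak limit generates a solution $w$ of \eqref{NLSBis} with $w(0)=g$, for which the induction hypothesis $\Lambda_\ast(E(g)+\varepsilon)<\infty$ combined with Proposition \ref{LWP} and iteration on consecutive unit intervals yields a uniform $X^1$ bound.

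Suppose now that we are not yet reduced to the critical case: either $g\ne 0$, there are at least two profiles with positive energy, or $\liminf_{J,k}E(R_k^J)>0$. By \eqref{OrthogonalityPD} each piece then has energy at most $E_{\max}-\eta$ for some $\eta>0$, and each $U_k^\alpha$ and $w$ is bounded in $X^1(I_k)$. Consider the superposition
\[ W_k^J(t):=w(t)+\sum_{\alpha\le J}U_k^\alpha(t)+e^{itL}R_k^J. \]
Plugging $W_k^J$ into \eqref{ANLS} yields an error made of quintic cross-interactions between distinct pieces and of nonlinear terms involving $e^{itL}R_k^J$. The orthogonality of frames (Definition \ref{DefPro}) combined with the multilinear Strichartz estimates of Lemma \ref{NewS32} forces the cross terms to vanish in $N(I_k)$; the contributions of the residual are controlled by interpolating the smallness \eqref{smallnessPD} with Theorem \ref{Stric} to deduce $\|e^{itL}R_k^J\|_{Z(I_k)}\to 0$ as $J,k\to\infty$. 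Since also $\|u_k(0)-W_k^J(0)\|_{H^1}\to 0$, the stability Proposition \ref{Stabprop} gives $\|u_k\|_{X^1(I_k)}\lesssim 1$, contradicting $\|u_k\|_{Z(I_k)}\to+\infty$.

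Hence we must be in the critical case: $g=0$, a single profile $\widetilde{\psi}_{\mathcal{O}_k}$ carries $E_{\mathbb{R}^3}(\psi)=E_{\max}$, and $\|R_k^1\|_{H^1}\to 0$. Proposition \ref{GEForEP}(i) provides $\tau=\tau(\psi)>0$ with $\|U_k\|_{X^1(-\tau,\tau)}\lesssim_{E_{\max}}1$; since $|I_k|\to 0$, eventually $I_k\subset(-\tau,\tau)$, and Proposition \ref{Stabprop} applied to $U_k$ with initial perturbation of size $\|R_k^1\|_{H^1}\to 0$ yields $\|u_k\|_{X^1(I_k)}\lesssim 1$, the final contradiction. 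This forces $E_{\max}=+\infty$; together with Proposition \ref{LWP} and the conservation laws \eqref{conserve}, this gives global existence of every strong solution, proving Theorem \ref{MainThm}. The hardest step is the error analysis in the superposition: one must show that quintic interactions involving inputs associated to pairwise orthogonal frames are negligible in $N(I_k)$, which requires the full strength of Lemma \ref{NewS32} applied case-by-case according to the type of orthogonality (scale, time, or spatial location on $\mathbb{S}^3$); the remainder is a faithful adaptation of \cite{IoPa2}.
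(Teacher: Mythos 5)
Your overall scheme coincides with the paper's (induction on energy, profile decomposition of $u_k(0)$, nonlinear profiles via Proposition \ref{GEForEP}, superposition plus stability), but two points need attention. First, your case division has a hole: when $g\neq 0$ carries the full energy, i.e. $u_k(0)\to g$ strongly in $H^1$ with $E(g)=E_{max}$, your ``non-critical'' branch asserts that every piece has energy at most $E_{max}-\eta$ and invokes the induction hypothesis $\Lambda_\ast(E(g)+\varepsilon)<\infty$; both statements fail in exactly that situation. The paper treats this as a separate case (Case I): since $\vert I_k\vert\to 0$, Strichartz estimates give $\Vert e^{itL}u_k(0)\Vert_{Z(I_k)}\le \Vert e^{itL}g\Vert_{Z(-\eta,\eta)}+o_k(1)\le\delta_0$, and Proposition \ref{LWP} alone yields the contradiction, with no induction hypothesis needed. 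This is easy to repair, but as written the branch is incorrect.

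More substantively, your account of why the superposition is an approximate solution misses the key estimate. The smallness $\Vert e^{itL}R^J_k\Vert_{Z(I_k)}\to 0$ (which you derive correctly, as in \eqref{SmallnessRterm}) combined with Lemma \ref{NewS32}/Lemma \ref{NLEst2} only controls terms in $F(U^J_{prof,k}+e^{itL}R^J_k)-F(U^J_{prof,k})$ in which the remainder can be placed in a $Z^\prime$ slot; the term that is quartic in a profile $U^\alpha_k$ concentrated at scale $(N^\alpha_k)^{-1}$ and linear in the portion of $e^{itL}R^J_k$ at frequencies much larger than $N^\alpha_k$ forces the remainder into the $X^1$ slot, where no smallness whatsoever is available. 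This is precisely \eqref{Asol2} of Lemma \ref{AlmostSol}, and its proof is the main nonlinear novelty of the paper: it requires the high-frequency/low-frequency interaction estimate of Lemma \ref{HFLFI}, which in turn rests on the non-concentration bound for spectral projectors of Lemma \ref{LocProj} and the specific localization of the spectrum of $\mathbb{S}^3$. Your closing sentence identifies the cross terms between orthogonal profiles as the hardest step and claims the rest is ``a faithful adaptation of \cite{IoPa2}''; on the sphere the profile--profile interactions indeed follow the $\mathbb{T}^3$ scheme, but the profile--remainder interaction is exactly where the geometry-specific input enters, so that adaptation is not available without proving Lemma \ref{HFLFI}, which your sketch never invokes.
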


\begin{proof}[Proof of Theorem \ref{Alex40}] Suppose for contradiction that $E_{max}<+\infty$. From now on, all our constants are allowed to depend on $E_{max}$. By definition, there exists a sequence of intervals $I_k$ and a sequence of solutions $u_k$ such that
\begin{equation}\label{CondForComp}
E(u_k)\to E_{max},\quad\vert I_k\vert\to0,\quad \Vert u_k\Vert_{Z(I_k)}\to+\infty
\end{equation}
and $0\in I_k$.
We now apply Proposition \ref{PD} to the sequence $\{u_k(0)\}_k$ with $I_k$. This gives a sequence of profiles $\widetilde{\psi}^\alpha_{\mathcal{O}^\alpha_k}$, $\alpha,k=1,2,\dots$, and a decomposition
\begin{equation*}
u_k(0)=g+\sum_{1\le\alpha\le J}\widetilde{\psi}^\alpha_{\mathcal{O}^\alpha_k}+R^J_k.
\end{equation*}

Using Lemma \ref{EquivFrames} and passing to a subsequence, we may renormalize every Euclidean profile, that is, up to passing to an equivalent profile, 
we may assume that for every Euclidean frame $\mathcal{O}^\alpha$, $\mathcal{O}^\alpha\in\widetilde{\mathcal{F}}_e$, see definition \eqref{renframe}.
Besides, using Lemma \ref{EquivFrames} and passing to a subsequence once again, we may assume that for every $\alpha\ne\beta$,
either $N^\alpha_k/N^\beta_k+N^\beta_k/N^\alpha_k\to+\infty$ as $k\to+\infty$ or $N^\alpha_k=N^\beta_k$ for all $k$ and in this case, either $t^\alpha_k=t^\beta_k$ as $k\to+\infty$ or $(N^\alpha_k)^2\vert t^\alpha_k-t^\beta_k\vert \to+\infty$ as $k\to+\infty$.

From \eqref{OrthogonalityPD} and Lemma \ref{EquivFrames} (iii) we see that, after extracting a subsequence,
\begin{equation}\label{SumOfL}
\begin{split}
&E(\alpha):=\lim_{k\to+\infty}E(\widetilde{\psi}^\alpha_{\mathcal{O}^\alpha_k})\in(0,E_{max}],\\
&\lim_{J\to+\infty}\big[\sum_{1\le\alpha\le J}E(\alpha)+\lim_{k\to+\infty}E(R_k^J)\big]\le E_{max}-E(g).
\end{split}
\end{equation}

We consider also the remainder and note that, for $p\in\{p_0,p_1\}$ and $q=(p_0+4)/2>4$,
\begin{equation*}
\begin{split}
\sum_N N^{5-p/2}&\Vert P_Ne^{itL}R^J_k\Vert_{L^{p}_{x,t}(\mathbb{S}^3\times I_k)}^{p}\\
&\lesssim \big[\sup_NN^{-\frac{1}{2}}\Vert e^{itL}P_N R^J_k\Vert_{L^\infty_{x,t}(\mathbb{S}^3\times I_k)}\big]^{p-q}\sum_N \left[N^{5/q-1/2}\Vert P_Ne^{itL}R^J_k\Vert_{L^q_{x,t}(\mathbb{S}^3\times I_k)}\right]^q\\
&\lesssim \big[\sup_NN^{-\frac{1}{2}}\Vert e^{itL}P_N R^J_k\Vert_{L^\infty_{x,t}(\mathbb{S}^3\times I_k)}\big]^{p-q}\sum_N N^q\Vert P_N R^J_k\Vert_{L^2_x(\mathbb{S}^3)}^q\\
&\lesssim\big[\sup_NN^{-\frac{1}{2}}\Vert e^{itL}P_N R^J_k\Vert_{L^\infty_{x,t}(\mathbb{S}^3\times I_k)}\big]^{p-q}.
\end{split}
\end{equation*}
Therefore
\begin{equation}\label{SmallnessRterm}
\limsup_{J\to+\infty}\limsup_{k\to+\infty}\Vert e^{itL}R^J_k\Vert_{Z(I_k)}=0.
\end{equation}

\medskip

{\bf Case I:} $\{u_k(0)\}_k$ converges strongly in $H^1(\mathbb{S}^3)$ to its limit $g$ which satisfies $E(g)=E_{max}$. Then, by Strichartz estimates, there exists $\eta>0$ such that, for $k$ large enough
\begin{equation*}
\Vert e^{itL}u_k(0)\Vert_{Z(I_k)}\le\Vert e^{itL}g\Vert_{Z(-\eta,\eta)}+o_k(1)\le\delta_0,
\end{equation*}
where $\delta_0$ is given by the local theory in Proposition \ref{LWP}. In this case, we conclude that $\Vert u_k\Vert_{Z(I_k)}\lesssim 2\delta_0$ which contradicts \eqref{CondForComp}.

\medskip

{\bf Case IIa:} $g=0$ and there are no profiles. Then, taking $J$ sufficiently large, we get that, for $k$ large enough,
\begin{equation*}
\Vert e^{itL}u_k(0)\Vert_{Z(I_k)}\le\delta_0,
\end{equation*}
where $\delta_0$ is as above. Once again, this contradicts \eqref{CondForComp}.

\medskip

{\bf Case IIb:} $g=0$ and there is only one Euclidean profile, such that
\begin{equation*}
u_k(0)=\widetilde{\psi}_{\mathcal{O}_k}+o_k(1)
\end{equation*}
in $H^1(\mathbb{S}^3)$ (see \eqref{SumOfL}), where $\mathcal{O}$ is a Euclidean frame. In this case, we let $U_k$ be the solution of \eqref{NLSBis} with initial data $U_k(0)=\widetilde{\psi}_{\mathcal{O}_k}$ and we use \eqref{ControlOnZNormForEP} to get, for $k$ large enough
\begin{equation*}
\Vert U_k\Vert_{Z(I_k)}\le\Vert U_k\Vert_{Z(-\delta,\delta)}\lesssim 1\quad\text{and}\quad\lim_{k\to +\infty}\Vert U_k(0)-u_k(0)\Vert_{H^1}\to 0.
\end{equation*}
We may use Proposition \ref{Stabprop} to deduce that
\begin{equation*}
\Vert u_k\Vert_{Z(I_k)}\lesssim \Vert u_k\Vert_{X^1(I_k)}\lesssim 1
\end{equation*}
which contradicts \eqref{CondForComp}.

\medskip

{\bf Case III:} $E(g)<E_{max}$ and $E(\alpha)<E_{max}$ for any $\alpha=1,2,\dots$. 
Up to relabeling the profiles, we can assume that for all $\alpha$, $E(\alpha)\le E(1)<E_{max}-\eta$, $E(g)<E_{max}-\eta$ for some $\eta>0$. 
Now for every linear profile $\widetilde{\psi}^\alpha_{\mathcal{O}^\alpha_k}$, we define the associated nonlinear profile $U^\alpha_k$ as the maximal solution of \eqref{NLSBis} with initial data $U^\alpha_k(0)=\widetilde{\psi}^\alpha_{\mathcal{O}^\alpha_k}$. 
A more precise description of each nonlinear profile is given by Proposition \ref{GEForEP}. Similarly, we define $W$ to be the nonlinear solution of \eqref{NLSBis} with initial data $g$. In view of the induction hypothesis
\begin{equation*}
\Vert W\Vert_{Z(-1,1)}+\Vert U^\alpha_k\Vert_{Z(-1,1)}\le 3\Lambda(E_{max}-\eta/2,2)\lesssim 1,
\end{equation*}
where from now on all the implicit constants are allowed to depend on $\Lambda(E_{max}-\eta/2,2)$. Using Proposition \ref{Stabprop} it follows that for any $\alpha$ and any $k>k_0(\alpha)$ sufficiently large,
\begin{equation}\label{BddX1}
\Vert W\Vert_{X^1(-1,1)}+\Vert U^\alpha_k\Vert_{X^1(-1,1)}\lesssim 1.
\end{equation}

For $J,k\geq 1$ we define
\begin{equation*}
U^J_{prof,k}:=W+\sum_{\alpha=1}^J U^\alpha_k.
\end{equation*}
We show first that there is a constant $Q$ such that
\begin{equation}\label{bi1}
\Vert U^J_{prof,k}\Vert_{X^1(-1,1)}^2+\Vert W\Vert_{X^1(-1,1)}^2+\sum_{\alpha=1}^J\Vert U^\alpha_k\Vert_{X^1(-1,1)}^2+\sum_{\alpha=1}^J\Vert U^\alpha_k-e^{itL}\widetilde{\psi}^\alpha_{\mathcal{O}^\alpha_k}\Vert_{X^1(-1,1)}\le Q^2,
\end{equation}
uniformly in $J$, for all $k\ge k_0(J)$ sufficiently large. Indeed, a simple fixed point argument as in Section \ref{localwp} shows that there exists $\delta_0>0$ such that if
\begin{equation*}
\Vert \phi\Vert_{H^1(\mathbb{S}^3)}=\delta\le\delta_0
\end{equation*}
then the unique strong solution of \eqref{NLSBis} with initial data $\phi$ is global and satisfies
\begin{equation}\label{SmalldataCCL}
\begin{split}
\Vert u\Vert_{X^1(-2,2)}&\le 2\delta\quad\text{and}\quad\Vert u-e^{itL}\phi\Vert_{X^1(-2,2)}\lesssim \delta^2.
\end{split}
\end{equation}
From \eqref{SumOfL}, we know that there are only finitely many profiles such that $E(\alpha)\ge\delta_0/2$. Without loss of generality, we may assume that for all $\alpha\ge A$, $E(\alpha)\le\delta_0$. Using \eqref{OrthogonalityPD}, \eqref{BddX1}, and \eqref{SmalldataCCL} we then see that
\begin{equation*}
\begin{split}
&\Vert U^J_{prof,k}\Vert_{X^1(-1,1)}=\Vert  W+\sum_{1\le\alpha\le J}U^\alpha_k\Vert_{X^1(-1,1)}\\
&\le \Vert W\Vert_{X^1(-1,1)}+\sum_{1\le\alpha\le A}\Vert U^\alpha_k\Vert_{X^1(-1,1)}+\Vert \sum_{A\le\alpha\le J}(U^\alpha_k-e^{itL}U^\alpha_k(0))\Vert_{X^1(-1,1)}\\
&+\Vert e^{itL}\sum_{A\le\alpha\le J}U^\alpha_k(0)\Vert_{X^1(-1,1)}\\
&\lesssim 1+A+\sum_{A\le\alpha\le J}E(\alpha)+\Vert\sum_{A\le\alpha\le J}U^\alpha_k(0)\Vert_{H^1}\lesssim 1.
\end{split}
\end{equation*}
The bound on $\sum_{\alpha=1}^J\Vert U^\alpha_k\Vert_{X^1(-1,1)}^2$ is similar (in fact easier), which gives \eqref{bi1}.

We now claim that
\begin{equation*}
U^J_{app,k}=W+\sum_{1\le\alpha\le J}U^\alpha_k+e^{itL}R^J_k
\end{equation*}
is an approximate solution for all $J\ge J_0$ and all $k\ge k_0(J)$ sufficiently large. We saw in \eqref{bi1} that $U^J_{app,k}$ has bounded $X^1$-norm. Let $\varepsilon=\varepsilon (2Q^2)$ be the constant given in Proposition \ref{Stabprop}. 
We compute, with $F(z)=z|z|^4$,
\begin{equation*}
\begin{split}
e&=\left(i\partial_t+L\right)U^J_{app,k}-F(U^J_{app,k})=F(U^J_{app,k})-F(W)-\sum_{1\le\alpha\le J}F(U^\alpha_k)\\
&=F(U^J_{prof,k}+e^{itL}R^J_k)-F(U^J_{prof,k})+F(U^J_{prof,k})-F(W)-\sum_{1\le\alpha\le J}F(U^\alpha_k).
\end{split}
\end{equation*}
and appealing to Lemma \ref{AlmostSol} below, we obtain that
\begin{equation*}
\limsup_{k\to+\infty}\Vert e\Vert_{N(I_k)}\le\varepsilon/2
\end{equation*}
for $J\ge J_0(\varepsilon)$. In this case, we may use Proposition \ref{Stabprop} to conclude that $u_k$ satisfies
\begin{equation*}
\Vert u_k\Vert_{X^1(I_k)}\lesssim \Vert U^J_{app,k}\Vert_{X^1(I_k)}\le \Vert U^J_{prof,k}\Vert_{X^1(-1,1)}+\Vert e^{itL}R^J_k\Vert_{X^1(-1,1)}\lesssim 1,
\end{equation*}
which contradicts \eqref{CondForComp}. This finishes the proof.
\end{proof}

We have now proved our main theorem, except for the following important assertion.

\begin{lemma}\label{AlmostSol}

With the notations in {\bf Case III} of the proof of Theorem \ref{Alex40}, we have that, for fixed $J$,
\begin{equation}\label{Asol1}
\limsup_{k\to+\infty}\Vert F(U^J_{prof,k})-F(W)-\sum_{1\le\alpha\le J}F(U^\alpha_k)\Vert_{N(I_k)}=0.
\end{equation}
Besides, we also have that
\begin{equation}\label{Asol2}
\limsup_{J\to+\infty}\limsup_{k\to+\infty}\Vert F(U^J_{prof,k}+e^{itL}R^J_k)-F(U^J_{prof,k})\Vert_{N(I_k)}=0.
\end{equation}
\end{lemma}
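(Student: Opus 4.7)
The plan is to expand both differences via the multilinearity of $F(z)=z|z|^4$ and estimate each resulting quintilinear monomial using Lemma \ref{NLEst2} together with the orthogonality properties of the profile decomposition. Throughout, I use the uniform $X^1$-control \eqref{bi1} on $U^J_{prof,k}$, the bound $\|e^{itL}R^J_k\|_{X^1(-1,1)}\lesssim 1$ (coming from $\|R^J_k\|_{H^1}\lesssim E_{max}$ and the isometry of $e^{itL}$ on $\widetilde{X}^1$), and Proposition \ref{Alex3} to handle the $N$-norm by duality when needed; recall also that $\|\cdot\|_{Z(I)}\lesssim\|\cdot\|_{X^1(I)}$, so the $Z^\prime$-norms appearing in \eqref{NLEst3} are always under control.

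For \eqref{Asol2}, I expand
\begin{equation*}
F(U^J_{prof,k}+e^{itL}R^J_k)-F(U^J_{prof,k})=\sum_{\substack{p_1+p_2=5\\p_2\ge 1}}c_{p_1,p_2}\,\mathfrak{O}_{p_1,p_2}(U^J_{prof,k},e^{itL}R^J_k).
\end{equation*}
Each monomial carries at least one factor of $e^{itL}R^J_k$, so after a Littlewood--Paley decomposition I apply \eqref{NLEst3} placing that factor in a $Z^\prime$-slot. Since $\|\cdot\|_{Z^\prime}=\|\cdot\|_Z^{1/2}\|\cdot\|_{X^1}^{1/2}$ and the $X^1$-norms of both $U^J_{prof,k}$ and $e^{itL}R^J_k$ stay bounded, the smallness $\|e^{itL}R^J_k\|_{Z(I_k)}\to 0$ from \eqref{SmallnessRterm} forces the whole sum to zero when $k\to\infty$ and then $J\to\infty$.

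For \eqref{Asol1}, expanding $F(W+\sum_\alpha U^\alpha_k)$ multilinearly and cancelling the diagonal terms $F(W)$ and $F(U^\alpha_k)$ leaves a finite sum of quintilinear expressions in each of which at least two of the five factors come from \emph{distinct} components in $\{W,U^1_k,\dots,U^J_k\}$. Since $J$ is fixed, it suffices by Proposition \ref{Alex3} combined with \eqref{NLEstTor} to establish the bilinear orthogonality
\begin{equation*}
\bigl\|P_{N_1}V^\alpha_k\cdot P_{N_2}V^\beta_k\bigr\|_{L^2_{x,t}(\mathbb{S}^3\times I_k)}\longrightarrow 0\qquad\text{as }k\to\infty,
\end{equation*}
for any pair of distinct components $V^\alpha_k\ne V^\beta_k$ drawn from $\{W,U^1_k,\dots,U^J_k\}$ and any dyadic $N_1,N_2$; the three remaining factors in the monomial are then absorbed into $X^1$- or $Z^\prime$-norms by \eqref{NLEstTor} without loss.

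The main obstacle is to make this bilinear orthogonality quantitative. To that end, I would use Proposition \ref{GEForEP} to replace each $U^\alpha_k$ up to $H^1$-small error by an explicit ansatz: inside the Euclidean window $|t-t^\alpha_k|\le T(N^\alpha_k)^{-2}$, by the concentrated lump $\tilde u^\alpha_k(x,t)=(N^\alpha_k)^{1/2}\eta(N^\alpha_k d_g(x^\alpha_k,x)/R)u^\alpha(N^\alpha_k\exp^{-1}_{x^\alpha_k}(x),(N^\alpha_k)^2(t-t^\alpha_k))$; outside, by the free evolution $\Pi_{t^\alpha_k-t,x^\alpha_k}T_{N^\alpha_k}\phi^{\alpha,\pm\infty}$, whose $Z$-norm on the complement of the window is controlled by Lemma \ref{Extinction} and \eqref{ExplicitDisp}. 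The three orthogonality conditions of Definition \ref{DefPro} --- separation of scales, separation of times, or separation of centers --- then force $\tilde u^\alpha_k$ and $\tilde u^\beta_k$ to have asymptotically disjoint support in $\mathbb{S}^3\times I_k$, making their $L^2_{x,t}$-product vanish in the limit. The cross terms involving $W$ are handled analogously, exploiting that $W\in X^1(-1,1)$ is a fixed function whereas $U^\alpha_k$ concentrates at the diverging scale $N^\alpha_k$. This argument is the spherical counterpart of the one in \cite[Section 6]{IoPa2}; the $\mathbb{S}^3$-specific input enters only through the exponential chart $\exp_{x^\alpha_k}$ and through Lemma \ref{Extinction}, both of which are now at our disposal.
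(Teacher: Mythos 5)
Your treatment of \eqref{Asol2} has a genuine gap, and it is located exactly at the point this lemma is actually hard. You expand the difference into monomials $\mathfrak{O}_{p_1,p_2}(U^J_{prof,k},e^{itL}R^J_k)$, $p_2\ge1$, and propose to ``apply \eqref{NLEst3} placing that factor in a $Z^\prime$-slot.'' But \eqref{NLEst3} does not let you choose freely which factor carries the $X^1$-norm: the $X^1$-slot is reserved for the factor at the dominant frequency $B$, and only the factors at frequencies $\lesssim B$ may be placed in $Z^\prime$-slots (the same constraint is built into \eqref{NLEstTor}, where the highest-frequency factor is measured in $Y^0$). In the regime where $e^{itL}R^J_k$ is at much higher frequency than all four profile factors --- which is unavoidable, since the remainder is only small in $Z$ and its high-frequency content is not controlled --- your argument forces the remainder into the $X^1$-slot, where you only know $\Vert e^{itL}R^J_k\Vert_{X^1}\lesssim 1$, and the profiles into $Z^\prime$-slots, where their norms are bounded but not small. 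So in that frequency regime your estimate yields $O(1)$, not $o(1)$, and the smallness \eqref{SmallnessRterm} is never used. Closing this regime is precisely what Lemma \ref{HFLFI} is for: the profile cores are, up to small errors, functions $\omega$ concentrated on $\{d_g(x,O)\le N^{-1},\,|t|\le N^{-2}\}$ with $|\nabla^j\omega|\lesssim N^{j+1/2}$, and the interaction $\mathfrak{O}_{4,1}(\omega,e^{itL}P_{>BN}f)$ gains a factor $B^{-1}$ in $L^1_tH^1$ from the frequency separation (via the non-concentration bound of Lemma \ref{LocProj} and the spectral structure of $\mathbb{S}^3$). The paper's proof is the argument of \cite[Section 7]{IoPa2} with \cite[Lemma 7.1]{IoPa2} replaced by Lemma \ref{HFLFI}; your proposal never invokes Lemma \ref{HFLFI} or any substitute for it, so the key step is missing.

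A secondary inaccuracy: in your reduction of \eqref{Asol1} you assert that orthogonality of the frames forces $\tilde u^\alpha_k$ and $\tilde u^\beta_k$ to have asymptotically disjoint supports in $\mathbb{S}^3\times I_k$. This is false when the frames are orthogonal only through scale separation with comparable centers and times ($N^\alpha_k/N^\beta_k\to\infty$, $x^\alpha_k=x^\beta_k$, $t^\alpha_k=t^\beta_k$): the space-time support of the finer profile is then contained in that of the coarser one. The bilinear smallness still holds there, but by a scaling/amplitude computation (the high-frequency lump has $L^2_{x,t}$-mass $O((N^\alpha_k)^{-2})$ on its own window against an $L^\infty$-bound $O((N^\beta_k)^{1/2})$ for the other factor), not by disjointness; this case must be argued separately, as in \cite{IoPa2}.
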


The proof of this Lemma is identical to the proof in \cite[Section 7]{IoPa2}, with \cite[Lemma $7.1$]{IoPa2} replaced by Lemma \ref{HFLFI} below.

Recall from Section \ref{SecNot} that $\mathfrak{O}_{4,1}(a,b)$ denotes a quantity which is quartic in $\{a,\overline{a}\}$ and linear in $\{b,\overline{b}\}$.

\begin{lemma}\label{HFLFI}
Let $O\in\mathbb{S}^3$ and assume that $B,N\geq 2$ are dyadic numbers and $\omega:\mathbb{S}^3\times(-1,1)\to\mathbb{C}$ is a function satisfying $\vert \nabla^j\omega\vert \leq N^{j+1/2}\mathbf{1}_{\{d_g(x,O)\leq N^{-1},\,|t|\leq N^{-2}\}}$, $j=0,1$. Then
\begin{equation*}
\Vert \mathfrak{O}_{4,1}(\omega,e^{itL}P_{> BN}f)\Vert_{L^1((-1,1),H^1)}\lesssim B^{-1}\|f\|_{H^1(\mathbb{S}^3)}.
\end{equation*}
\end{lemma}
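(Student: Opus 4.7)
The plan is to expand $\mathfrak{O}_{4,1}(\omega,h)=\omega^4 h$ (complex conjugations are immaterial for any of the bounds below) with $h:=e^{itL}P_{>BN}f$, and split via Leibniz:
$\nabla(\omega^4 h)=4\omega^3\nabla\omega\cdot h+\omega^4\nabla h$,
giving three pieces to control in $L^1_tL^2_x$: $\omega^4 h$, $\omega^3\nabla\omega\cdot h$, and $\omega^4\nabla h$. For the first two, H\"older in space combined with the high-frequency bound
$\|h\|_{L^2}=\|P_{>BN}f\|_{L^2}\leq (BN)^{-1}\|f\|_{H^1}$
(from $\|P_{>BN}g\|_{L^2}^2=\sum_{q>BN}\|\pi_q g\|^2\leq (BN)^{-2}\|g\|_{H^1}^2$) yields pointwise in $t$ the bounds $\|\omega^4 h\|_{L^2}\lesssim N\cdot B^{-1}\|f\|_{H^1}$ and $\|\omega^3\nabla\omega\,h\|_{L^2}\lesssim N^2 B^{-1}\|f\|_{H^1}$. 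Integrating over the temporal support $|t|\leq N^{-2}$ of $\omega$ contributes a factor $N^{-2}$ and delivers the required $\lesssim B^{-1}\|f\|_{H^1}$ for these two terms.

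The heart of the argument is the remaining piece $\omega^4\nabla h$: here $\|\nabla h\|_{L^2}\sim\|f\|_{H^1}$ carries no $B$-gain, so smallness must be extracted from the spatial localization $\mathbf{1}_B\nabla h$ on $B:=B(O,N^{-1})$. Decompose $h=\sum_{q>BN}h_q$ with $h_q=e^{itq^2}\pi_q f$, an eigenfunction of $-\Delta$ at eigenvalue $q^2-1$. With a smooth cutoff $\chi\equiv 1$ on $B$, supported in $2B$, $|\nabla\chi|\lesssim N$, the eigenvalue equation combined with a Caccioppoli-type integration by parts yields
$\|\chi\nabla h_q\|_{L^2}^2\lesssim (q^2+N^2)\|\mathbf{1}_{2B}h_q\|_{L^2}^2$.
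Feeding in Lemma \ref{LocProj} to bound $\|\mathbf{1}_{2B}h_q\|_{L^2}^2\lesssim N^{-1}\|h_q\|_{L^2}^2$ (valid for all $q$ with $q^{-2}\lesssim N^{-1/2}$, which holds in our range $q>BN$) gives the per-eigenfunction estimate $\|\mathbf{1}_B\nabla h_q\|_{L^2}\lesssim qN^{-1/2}\|\pi_q f\|_{L^2}$.

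Summing in $q$ by the triangle inequality would cost a divergent factor, so I would instead pass from $L^1_tL^2_x$ to $L^2_{t,x}$ via Cauchy--Schwarz in $t$ (with $T=N^{-2}$) and expand the space-time integral using the spectral decomposition:
\begin{equation*}
\int_{-T}^{T}\!\!\int_B |\nabla h|^2\,dx\,dt=\sum_{q,q'}\Bigl(\int_{-T}^{T}e^{it(q^2-q'^2)}\,dt\Bigr)\int_B \nabla\pi_q f\cdot\overline{\nabla\pi_{q'}f}\,dx.
\end{equation*}
The time kernel equals $2T$ on the diagonal and is bounded by $2/|q^2-q'^2|$ off of it. Here the arithmetic of the $\mathbb{S}^3$ spectrum is essential: for distinct $q,q'>BN$, $|q^2-q'^2|=(q+q')|q-q'|\geq q+q'\geq 2BN$, a uniform $(BN)^{-1}$ gain on every off-diagonal interaction. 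This sparseness of $\{q^2\}$ plays exactly the role that the lattice $\mathbb{Z}^3$ plays on the torus in \cite[Lemma~7.1]{IoPa2}. Combining the diagonal bound obtained from the per-$q$ Caccioppoli estimate with a Schur-type bound on the off-diagonal kernel $1/((q+q')|q-q'|)$ restricted to $q,q'>BN$ should produce $\int\!\int|\nabla h|^2\mathbf{1}_{B\times[-T,T]}\lesssim TB^{-2}\|f\|_{H^1}^2$, and unwinding the Cauchy--Schwarz step then gives $\|\omega^4\nabla h\|_{L^1_tL^2_x}\lesssim B^{-1}\|f\|_{H^1}$.

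The main obstacle is precisely the off-diagonal Schur step: the kernel $1/((q+q')|q-q'|)$ is only borderline summable on integer pairs $q\neq q'>BN$, and closing the estimate must carefully trade the $(BN)^{-1}$ factor from the spectral gap against the $q$-weights inherited from Caccioppoli. This is exactly the point at which the argument uses the special form of the $\mathbb{S}^3$ spectrum in an essential way and does not extend to a general manifold without further spectral information.
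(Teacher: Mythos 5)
Your treatment of the easy terms, the Cauchy--Schwarz in time down to a localized space--time $L^2$ quantity on $B(O,N^{-1})\times[-N^{-2},N^{-2}]$, and the idea of exploiting $|q^2-q'^2|=(q+q')|q-q'|\ge BN|q-q'|$ together with Lemma \ref{LocProj} all coincide with the paper's strategy. But the main step is not closed, and as set up it cannot deliver what you claim. First, the intermediate target $\int_{-T}^{T}\int_B|\nabla h|^2\lesssim TB^{-2}\|f\|_{H^1}^2$ is false in general: by your own Caccioppoli-plus-Lemma \ref{LocProj} bound $\|\mathbf{1}_B\nabla\pi_qf\|_{L^2}\lesssim qN^{-1/2}\|\pi_qf\|_{L^2}$ (which is sharp on zonal harmonics, see the remark after Lemma \ref{LocProj}), the diagonal alone is of size $TN^{-1}\sum_q q^2\|\pi_qf\|_{L^2}^2\simeq TN^{-1}$, which exceeds $TB^{-2}$ as soon as $B^2\gtrsim N$. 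What this route can give for the squared $L^1_tL^2_x$ norm is $N^{-1}+B^{-1}$, and in fact that is also exactly what the paper's own reduction produces (it proves $\|K\|\lesssim N^2B^{-1}$ in \eqref{Alex50} and takes a square root), which is all that is needed downstream; so you should not be aiming at $TB^{-2}$. Second, the obstacle you flag is genuine: once you take absolute values term by term, the off-diagonal kernel $\min(T,|q^2-q'^2|^{-1})\,qq'N^{-1}$ acting on $\|\pi_qf\|\|\pi_{q'}f\|$ has Schur sums carrying a logarithmic divergence from $\sum_{|q-q'|\le q}|q-q'|^{-1}$; with the sharp cutoff $\mathbf{1}_{[-T,T]}$ this costs at least $\log(BN)$, which ruins uniformity in $N$ and hence the usability of the lemma in the profile analysis, where $N=N_k\to\infty$.

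The paper avoids both problems by never summing absolute values and never using a sharp time cutoff. Writing the gradient in a frame of vector fields commuting with $L$ (available from the group structure of $\mathbb{S}^3$), the main term becomes the quadratic form $\sum_j\langle\partial_jf,K\partial_jf\rangle$ with $K=P_{>BN}\int e^{-itL}We^{itL}\,dt\,P_{>BN}$, where $W=N^4\chi_N(x)\eta(N^2t)$, $\chi_N=\mathbf{1}_{B(O,2N^{-1})}$, is a majorant of $|\omega|^8$ with a \emph{smooth} time profile; this removes any need for Caccioppoli and keeps the $q$-weights out of the kernel. The blocks are $\pi_pK\pi_q=N^2(1-\eta(p/BN))(1-\eta(q/BN))\hat\eta(N^{-2}(p^2-q^2))\,\pi_p\chi_N\pi_q$; Lemma \ref{LocProj}, applied on both sides of $\chi_N$, gives $\|\pi_p\chi_N\pi_q\|_{L^2\to L^2}\lesssim N^{-1}+\min(p,q)^{-2}$, and Schur's lemma with the rapid decay of $\hat\eta$ and $|p^2-q^2|\ge BN|p-q|$ yields $\|K\|\lesssim N^2B^{-1}$: the near-diagonal band contributes about $N/B$ values of $q$, each of size $N^{-1}$, and the far off-diagonal terms are negligible precisely because $\hat\eta$ decays rapidly --- the decay your sharp cutoff fails to provide. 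If you dominate $\mathbf{1}_{|t|\le N^{-2}}$ by the smooth bump $\eta(N^2t)$ (legitimate, since only an upper bound on $|\omega|$ is used) and keep the bilinear structure as an operator-norm bound rather than a termwise sum, your scheme closes and recovers the paper's estimate; as written, the off-diagonal summation does not close and the stated conclusion is not reached.
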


\begin{proof}[Proof of Lemma \ref{HFLFI}] The general strategy of the proof is similar to the one in \cite{IoPa2} on $\mathbb{T}^3$.
We may assume that $\|f\|_{H^1(\mathbb{S}^3)}=1$ and $f=P_{>BN}f$. We notice that
\begin{equation*}
\begin{split}
\Vert \mathfrak{O}_{4,1}(\omega,e^{itL}P_{>BN}f)\Vert_{L^1((-1,1),H^1)}&\lesssim \Vert \mathfrak{O}_{4,1}(\omega,\nabla e^{itL}f)\Vert_{L^1((-1,1),L^2)}\\
&+\Vert e^{itL}f\Vert_{L^\infty_tL^2_x}\Vert \omega\Vert_{L^4_tL^\infty_x}^3\Vert \vert\nabla\omega\vert+\vert\omega\vert\Vert_{L^4_tL^\infty_x}\\
&\lesssim \Vert \mathfrak{O}_{4,1}(\omega,\nabla e^{itL}f)\Vert_{L^1((-1,1),L^2)}+B^{-1}.
\end{split}
\end{equation*}
Let $\chi_N=\mathfrak{1}_{B(O,2N^{-1})}$ and $W(x,t):=N^4\chi_N(x)\eta(N^2t)$ and write
\begin{equation*}
\begin{split}
\Vert \mathfrak{O}_{4,1}(\omega,\nabla e^{itL}f)\Vert_{L^1((-1,1),L^2)}^2&\lesssim N^{-2}\Vert W^\frac{1}{2}\nabla e^{itL}f\Vert_{L^2(\mathbb{S}^3\times (-1,1))}^2\\
&\lesssim N^{-2}\sum_{j=1}^3\int_{-1}^1\langle e^{itL}\partial_jf,We^{itL}\partial_jf\rangle_{L^2\times L^2(\mathbb{S}^3)} dt\\
&\lesssim N^{-2}\sum_{j=1}^3\langle \partial_jf,\left[\int_{-1}^1e^{-itL}We^{itL}dt\right] \partial_jf\rangle_{L^2\times L^2(\mathbb{S}^3)}.
\end{split}
\end{equation*}
Therefore, it remains to prove that
\begin{equation}\label{Alex50}
\|K\|_{L^2(\mathbb{S}^3)\to L^2(\mathbb{S}^3)}\lesssim N^2B^{-1}\,\,\text{ where }\,\,K=P_{>BN}\int_{\mathbb{R}}e^{-itL}We^{itL}P_{>BN}\,dt.
\end{equation}

We look at the Fourier coefficients

\begin{equation*}
\begin{split}
K_{p,q}&=\pi_pK\pi_q\\
&=N^4(1-\eta(p/BN))(1-\eta(q/BN))\int_{\mathbb{R}}e^{-it\left[p^2-q^2\right]}\eta(N^2t)dt\cdot\left[\pi_p\chi_N\pi_q\right]\\
&=N^2(1-\eta(p/BN))(1-\eta(q/BN))\hat{\eta}(N^{-2}(p^2-q^2))\cdot\left[\pi_p\chi_N\pi_q\right].
\end{split}
\end{equation*}
Using Schur's lemma, it suffices to prove that
\begin{equation*}
\sup_{p\ge BN}\sum_{q\in\mathbb{Z}}(1-\eta(q/BN))\vert \hat{\eta}(N^{-2}(p^2-q^2))\vert\Vert \pi_p\chi_N\pi_q\Vert_{L^2\to L^2}\lesssim B^{-1}.
\end{equation*}
The new ingredient we need is the following
\begin{equation}\label{CrucialGain}
\Vert \pi_p\chi_N\pi_q\Vert_{L^2\to L^2}\lesssim N^{-1}+\min(p,q)^{-2}
\end{equation}
which is a consequence\footnote{Note that we use both bounds in \eqref{CrucialGain1}.} of \eqref{CrucialGain1}.
Assuming \eqref{CrucialGain}, we finish the proof as follows: for any $p\ge BN$,
\begin{equation*}
\begin{split}
\sum_{q\in\mathbb{Z}}(1-\eta(q/BN))\vert \hat{\eta}(N^{-2}(p^2-q^2))\vert\Vert \pi_p\chi_N\pi_q\Vert_{L^2\to L^2}&\lesssim \sum_{q\ge BN}N^{-1}\cdot\left[1+N^{-2}\vert p^2-q^2\vert\right]^{-10}\\
&\lesssim \sum_{q\ge BN}N^{-1}\cdot\left[1+B\vert p-q\vert/N\right]^{-10}\\
&\lesssim B^{-1}
\end{split}
\end{equation*}
which finishes the proof.
\end{proof}

\section{Appendix}\label{App}

\subsection{Weyl Sum estimate}
For a sequence $c=(c_p)_p$, we define the linear difference operator $\delta$ by
\begin{equation*}
(\delta c)_p=c_{p}-c_{p-1}
\end{equation*}
and for $j\ge 1$, $\delta^{j+1}c=\delta(\delta^jc)$. The following lemma is essentially from \cite{Bo2} in a slightly different formulation. 
\begin{lemma}\label{BouLem}
Assume that $(c_p)_p$ satisfies
\begin{equation*}
\vert\delta^j c\vert\lesssim KN^{-j},\quad 0\le j\le 2
\end{equation*}
and that
\begin{equation*}
\{p:c_p\ne 0\}\subset [-QN,QN]
\end{equation*}
For $t\in [-\pi,\pi]$ let $t/\pi=a/q+\beta$, $0\le \vert a\vert\le q\le N$ and $\vert\beta\vert\le1/(Nq)$ be its Dirichlet approximation.
Define
\begin{equation*}
S(t)=\sum_pc_pe^{it\vert p\vert^{2}},
\end{equation*}
then there holds that
\begin{equation}\label{LinftyBound}
\vert S(t)\vert\lesssim KQ^\frac{3}{2}\frac{N}{\sqrt{q(1+N^{2}\vert\beta\vert)}}.
\end{equation}
\end{lemma}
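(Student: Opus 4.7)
The plan is to exploit the factorization $e^{itp^2}=e^{i\pi ap^2/q}\,e^{i\pi\beta p^2}$: the first factor has anti-period $2q$ in $p$ and admits complete Gauss-sum cancellation of size $\sqrt q$, while the second oscillates slowly under $|\beta|\le 1/(Nq)$ and is controlled by van der Corput's second-derivative test. By rescaling $c\mapsto c/K$ I may assume $K=1$, and by reducing $a/q$ to lowest terms I may assume $(a,q)=1$.

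I would first split $p$ by its residue modulo $2q$: writing $p=r+2nq$ with $0\le r<2q$ and $n\in\mathbb Z$, the congruence $(r+2nq)^2\equiv r^2\pmod{2q}$ gives
\begin{equation*}
S(t)=\sum_{r=0}^{2q-1}e^{i\pi ar^2/q}\,I_r,\qquad I_r=\sum_{n\in\mathbb Z}c_{r+2nq}\,e^{i\pi\beta(r+2nq)^2}.
\end{equation*}
Then I apply Poisson summation in $n$ to each $I_r$, using a smooth (piecewise linear) extension $\tilde c$ of $c$; the bounds $|\delta^j c|\lesssim N^{-j}$ ($j=0,1,2$) imply matching $L^\infty$ control on $\tilde c^{(j)}$. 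After the change of variables $y=r+2xq$, the $r$-dependence of each Fourier mode becomes a linear phase $e^{i\pi mr/q}$, and summing over $r$ produces a shifted complete Gauss sum of magnitude $\sqrt{2q}$ (after completing the square, using $(a,q)=1$). This yields
\begin{equation*}
|S(t)|\lesssim \frac{1}{\sqrt q}\sum_{m\in\mathbb Z}|J_m|,\qquad J_m:=\int_{\mathbb R}\tilde c(y)\,e^{i\pi(\beta y^2-my/q)}\,dy.
\end{equation*}

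For each $J_m$ I would apply van der Corput's second-derivative test to $\phi_m(y)=\pi(\beta y^2-my/q)$: the stationary point $y^*=m/(2q\beta)$ lies in $\operatorname{supp}\tilde c\subset[-QN,QN]$ only for $|m|\lesssim QNq|\beta|$, giving $|J_m|\lesssim\min(QN,|\beta|^{-1/2})$ on this range; for $|m|$ outside, two integrations by parts in the non-stationary regime (using $\|\tilde c''\|_{L^1}\lesssim Q/N$) produce rapid decay in $m$. Summation therefore yields a total of order $\max(1,QNq|\beta|)\cdot\min(QN,|\beta|^{-1/2})$. A short case analysis distinguishing the three sub-regimes $|\beta|\le 1/(QN)^2$, $1/(QN)^2\le|\beta|\le 1/(QNq)$, and $1/(QNq)\le|\beta|\le 1/(Nq)$ then shows that in each case the resulting estimate is at most
\begin{equation*}
|S(t)|\lesssim \frac{Q^{3/2}N}{\sqrt{q(1+N^2|\beta|)}},
\end{equation*}
crucially using the Dirichlet upper bound $|\beta|\le 1/(Nq)$ to cap the number of contributing $m$.

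The main obstacle is justifying the Poisson summation step on the discrete sequence $c$: the smooth extension $\tilde c$ produced by piecewise linear interpolation between integer nodes must be shown to have Fourier behavior sufficiently close to the original sum $I_r$ for the above estimate to go through. I plan to bound the Poisson error directly using $\|\delta^2 c\|_\infty\lesssim N^{-2}$ and the support size $|\operatorname{supp}(c)|\lesssim QN$, so that the error is dominated by the leading-order estimate above. This is the essence of Bourgain's original argument in \cite{Bo2}, to which the authors refer.
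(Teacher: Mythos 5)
Your route (splitting $p$ mod $2q$, extracting a complete Gauss sum of size $O(\sqrt q)$, and Poisson summation reducing matters to the oscillatory integrals $J_m$) is genuinely different from the paper's proof, which never invokes Poisson or stationary phase: there one estimates $\vert S\vert^2$ by Weyl differencing, bounds the autocorrelations $\sigma_m=\sum_p\overline{c}_pc_{p+m}e^{2itmp}$ by two summations by parts using only $\vert\delta^jc\vert\lesssim KN^{-j}$, and then counts the residues $am\bmod q$ (resonant versus nonresonant classes). Your Poisson step itself is sound (the piecewise-linear interpolant agrees with $c$ at the integers, is continuous and compactly supported, and the mod-$2q$ Gauss sum is $O(\sqrt q)$ uniformly in the linear shift since $(a,q)=1$), and the nonstationary tail via two integrations by parts with $\Vert\tilde c''\Vert_{L^1}\lesssim Q/N$ is fine. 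The gap is in the resonant modes: the van der Corput second-derivative test with an amplitude $g$ gives $\vert\int ge^{i\phi}\vert\lesssim\vert\phi''\vert^{-1/2}\bigl(\Vert g\Vert_\infty+\mathrm{Var}(g)\bigr)$, and here $\mathrm{Var}(\tilde c)\lesssim(QN)\cdot N^{-1}\simeq Q$, not $O(1)$. So the bound you can actually cite is $\vert J_m\vert\lesssim Q\min\bigl(QN,\vert\beta\vert^{-1/2}\bigr)$, not $\min\bigl(QN,\vert\beta\vert^{-1/2}\bigr)$. Feeding the corrected bound into your own accounting (up to $1+QNq\vert\beta\vert\simeq Q$ resonant modes, divided by $\sqrt q$), the worst case $\vert\beta\vert\simeq(Nq)^{-1}$ gives $\vert S\vert\lesssim KQ^2N/\sqrt{q(1+N^2\vert\beta\vert)}$, i.e. you miss the stated exponent $Q^{3/2}$ by $Q^{1/2}$; the case analysis as announced does not close.

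The missing idea is to use the hypothesis $\vert\delta c\vert\lesssim KN^{-1}$ \emph{locally} rather than only through the global variation: over the stationary window $\vert y-y^{*}\vert\le\vert\beta\vert^{-1/2}$ the variation of $\tilde c$ is only $O(\vert\beta\vert^{-1/2}/N)$, and a dyadic decomposition in $\vert y-y^{*}\vert$ with the first-derivative test on each shell yields $\vert J_m\vert\lesssim\min\bigl(QN,\;\vert\beta\vert^{-1/2}+(N\vert\beta\vert)^{-1}\log(2+QN\vert\beta\vert^{1/2})\bigr)$. With this refinement your three sub-regimes do close within the $Q^{3/2}$ budget (the logarithm is harmless because $\log(2+Qu)\lesssim\sqrt Q\,u$ for $u\ge1$), so the approach is repairable; but as written, the key per-mode estimate is not justified and the final constant bookkeeping fails. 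Note that the paper's squaring argument sidesteps this issue entirely, which is precisely what makes it shorter.
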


\begin{proof}
We may assume that $K=1$.
We first compute
\begin{equation*}
\begin{split}
\vert S\vert^2&=\sum_{a,b}\overline{c}_ac_be^{it\left[\vert b\vert^{2}-\vert a\vert^{2}\right]}=\sum_{m}e^{i t\vert m\vert^{2}}\sigma_m\\
\sigma_m&=\sum_{p}\overline{c}_{p}c_{p+m}e^{it 2mp}.
\end{split}
\end{equation*}
We shall not use the oscillations that might be present in the above sum beyond the following claim:
\begin{equation}\label{Claim1}
\vert\sigma_m\vert\lesssim\frac{NQ}{\left[1+N\hbox{dist}(mt/\pi,\mathbb{Z})\right]^2}.
\end{equation}
If $N\hbox{dist}(mt/\pi,\mathbb{Z})<1$, the bound is clear. Otherwise, we simply observe that, letting $z=e^{i2mt}$ and $C_p=\overline{c}_pc_{p+m}$, there holds, uniformly in $m$,
\begin{equation*}
\begin{split}
(1-z)\sum_pC_pz^p&=\sum_p(\delta C)_pz^p\\
(1-z)^{2}\sum_pC_pz^p&=\sum_p(\delta^{2} C)_pz^p\\
\vert \delta^jC_p\vert&\lesssim N^{-j},\quad 0\le j\le 2.
\end{split}
\end{equation*}
This gives \eqref{Claim1}.

\medskip

Now, we can finish the proof. We may assume that $a\ge 0$ and $\vert\beta\vert\ne 0$. For any $m\in\mathbb{Z}$, we define
\begin{equation*}
b(m)=am \mod q,\quad b(m)\in\mathbb{Z}_q=\{0,1,\dots,q-1\}.
\end{equation*}
Since $(a,q)=1$, $a$ is invertible in $\mathbb{Z}_q$ and the mapping $r\mapsto b(r)$ is a bijection $\mathbb{Z}_q\to\mathbb{Z}_q$. We now distinguish two cases.

\medskip

The nonresonant case\footnote{This case is of course vacuous if $q\le 10Q$.}: $b(r)\notin \mathcal{R}=\{0,1,.,3Q,q-3Q,\dots,q-2,q-1\}$. In this case, since $\vert m\vert\le 2QN$ and
\begin{equation*}
mt/\pi=\frac{ma}{q}+m\beta\in \mathbb{Z}+\frac{b(m)}{q}+[-\frac{2Q}{q},\frac{2Q}{q}],
\end{equation*}
we may use the oscillations in $b(m)$ since
\begin{equation*}
\hbox{dist}(mt/\pi,\mathbb{Z})=\frac{b(m)}{q}+m\beta\ge\frac{3}{5}\min\left\{\frac{b(m)}{q},\frac{q-b(m)}{q}\right\}
\end{equation*}
so that, we can estimate the corresponding contribution by
\begin{equation*}
\begin{split}
\sum_{m:b(m)\notin\mathcal{R}}\left\vert\sigma_m\right\vert\lesssim\frac{Q}{N}\sum_{m:b(m)\notin\mathcal{R}}\frac{q^2}{[b(m)]^2}\lesssim\frac{Qq^2}{N}\sum_{k\ge 2}\sum_{m:b(m)=k}\frac{1}{k^2}\lesssim \frac{Q^2q^2}{N}\frac{N}{q}\lesssim Q^2q
\end{split}
\end{equation*}
which is acceptable.

\medskip

The resonant case. In this case, we are left with a worse bound in \eqref{Claim1}, but fortunately, there are only $6Q$ of them and we can estimate them one by one. Thus, from now on, we assume that $b(m)$ is fixed. Then, clearly,
\begin{equation*}
\{\hbox{dist}(mt/\pi,\mathbb{Z})\,\,\,:\,\,\,b(m)=k\}
\end{equation*}
is contained in at most $1+Q/q$ arithmetic sequences of length $O(N)$ and increment $2q\vert \beta\vert$. Hence its contribution can be estimated by
\begin{equation*}
\begin{split}
Q\min(\frac{N}{q}QN,\sum_{k\ge 0}\frac{NQ}{(1+N2kq\vert\beta\vert)^2})&\lesssim Q^2\min(\frac{N^2}{q},\sum_{2k q\vert\beta\vert N\le 1}N+\sum_{2k q\vert\beta\vert N\ge 1}\frac{1}{N(kq\vert\beta\vert)^2})\\
&\lesssim Q^2\min(\frac{N^2}{q},\frac{1}{q\vert\beta\vert}).
\end{split}
\end{equation*}
Again, this is acceptable.
\end{proof}

\subsection{The case of the ball with Dirichlet boundary conditions and radial data}\label{SecBall}

Here we give the main ingredients to prove Theorem \ref{BallThm}. The analysis of the Dirichlet problem on $B(0,\pi)$ is not so different from the analysis on $\mathbb{S}^3$ due to the relation
\begin{equation*}
(1-\Delta_{\mathbb{S}^3})f=\frac{\theta^2}{\sin^2\theta}\Delta_{\mathbb{R}^3}\left[\frac{\sin^2\theta}{\theta^2}f\right]
\end{equation*}
where $\theta$ denotes the distance to the origin\footnote{Here we identify functions on $\mathbb{S}^3$ with functions on $B(0,\pi)$ through the relation $f(x)\simeq f(\exp_Ox)$, where $O$ denotes the north pole.}. This leads to the relation
\begin{equation}\label{EquivFlow}
\left(e^{-it\Delta_{B^3_D}}\varphi\right)=g\cdot e^{itL}\left(\frac{\varphi}{g}\right),\quad g(\theta,\omega)=\frac{\sin(\theta)}{\theta}.
\end{equation}
Since we also have that
\begin{equation*}
\Vert \frac{\varphi}{g}\Vert_{L^2(\mathbb{S}^3)}=\Vert \varphi\Vert_{L^2(B^3)},
\end{equation*}
we can directly transfer the linear estimates on $\mathbb{S}^3$ to estimates on the ball with Dirichlet condition. In particular, we recover all the results of Section \ref{localwp}. In Section \ref{SecProf}, we also see that Lemma \ref{Extinction} holds directly, while the other lemmas do not depend on the geometry and hence trivially hold. Note in particular that the radial Sobolev inequality
\begin{equation*}
\left(\frac{\vert x\vert}{\pi-\vert x\vert}\right)^\frac{1}{2}\vert u(x)\vert\lesssim\Vert\nabla u\Vert_{L^2(B)},
\end{equation*}
valid for all functions vanishing at $\pi$ forces all the Euclidean profiles to only concentrate at the origin. In Section \ref{SecGWP}, the main novelty is in the linear Lemma \ref{HFLFI}, which again holds equally, thanks to \eqref{EquivFlow}. The other parts of the proof need only minor modifications.

\end{document}